\def\N{{\mathbb N}}
\theoremstyle{plain}
\newtheorem*{Theorem}{Theorem}
\newtheorem{theorem}{Theorem}[section]
\newtheorem{proposition}[theorem]{Proposition}
\newtheorem{lemma}[theorem]{Lemma}
\newtheorem*{Question}{Question}
\newtheorem{corollary}[theorem]{Corollary}
\theoremstyle{definition}
\newtheorem{definition}[theorem]{Definition}
\newtheorem{example}[theorem]{Example}
\newtheorem{conjecture}[theorem]{Conjecture}
\theoremstyle{remark}
\newtheorem{remark}[theorem]{Remark}
\numberwithin{equation}{theorem}
\def\Fa{ \,\,\forall \,\, }
\def\lra{\longrightarrow}
\def\Supp{\mbox{ Supp}}
\def\surj{\twoheadrightarrow}
\def\inj{\hookrightarrow}
\DeclarePairedDelimiter{\brackets}{(}{)}
\newcommand{\T}[2]{\operatorname{{\tau}}_{{#2}} (#1)}
\newcommand{\End}[2]{\operatorname{End}_{#1} (#2)}
\newcommand{\MO}{M \otimes_R M^*} 
\renewcommand{\hom}[3]{\operatorname{Hom}_{#1} (#2, #3)}
\newcommand{\Z}{\operatorname{{Z}} \brackets}
\newcommand{\IM}{\operatorname{Im} \brackets}
\newcommand{\Ass}{\operatorname{Ass} \brackets}
\newcommand{\Ann}{\operatorname{Ann}_R \brackets}
\newcommand{\Ext}[4]{\operatorname{Ext}^{#1}_{#2} (#3, #4)}
\newcommand{\grade}{\operatorname{grade} \brackets}
\begin{document}

\title[Trace ideals and Centers]{Trace Ideals  and Centers of Endomorphism Rings \\ 
of   Modules over Commutative Rings }

\author{Haydee Lindo}
\curraddr{Dept. of Mathematics \& Statistics, Williams College, Williamstown, MA, USA}
\email{08hml@williams.edu}

\thanks{This research was partially supported by NSF grant DMS-1503044}

\date{\today}

\keywords{Trace Ideal, Endomorphism Ring, Balanced Module}

\subjclass[2010]{13C13, 16S50.}

\maketitle

\begin{abstract} Let $R$ be a commutative Noetherian ring and $M$ a finitely generated $R$-module. Under various hypotheses, it is proved that the center of $\End R M$ coincides with the endomorphism ring of the trace ideal of $M$. These results are exploited to establish results for balanced  and rigid modules, and to settle certain cases of a conjecture of Huneke and Wiegand. 
\end{abstract}

\section{Introduction} 
Let $R$ be a commutative ring and $M$ a finitely generated $R$-module. The trace ideal of $M$, denoted $\T R M$,  is the ideal  $\sum \alpha (M)$ as $\alpha $ ranges over $M^*:= \hom R M R $. 
We are interested in the connection between the properties of $M$ and those of $\T R M$. For special classes of modules the corresponding trace ideals demonstrate remarkable properties. For example, a finitely generated ideal is the trace ideal of a projective module if and only if it is idempotent, and  $\T R M = R$ if and only if every finitely generated $R$-module is a homomorphic image of a direct sum of copies of $M$; see \cite{Lam}, \cite{Whitehead1980} and Section \ref{properties}.

This work was motivated by hints in the literature about the relationship between $\T R M$ and the center of $\End R M$. In \cite{MaxOrders}, Auslander and Goldman   show that when $\T R M  = R$, each endomorphism in the center of $\End R M$ is given by multiplication by a unique ring element (see also \cite[Exercise 95]{Kaplansky1954}). That is, $\Z{\End R M}  = R.$

Two of our central results clarify this connection, first in the case when $M$  is reflexive and faithful and second in the case where  $\T R M$ has positive grade; see Theorems \ref{main} and \ref{main2}. In both cases, $\End R {\T R M}$ equals the center of the endomorphism ring of an associated module (when the rings are viewed as subrings of the total ring of quotients); see Corollaries \ref{inQ} and \ref{identify}.

\begin{Theorem}
Let $R$ be a Noetherian ring and $M$ a finitely generated $R$-module.
\begin{enumerate}[label=(\roman*)]
\item If $M$ is reflexive and faithful,  then there is a canonical isomorphism \\of $R$-algebras
\[{\End R {\T R M}}  \cong \Z {\End R M}\]
\item If $\T R M$ has positive grade, then there are canonical  isomorphisms of $R$-algebras
\[ \End R {\T R M} \cong \End R {\T R {M^*}} \cong \Z{\End R {M^*}} .\]

\end{enumerate} 
\end{Theorem}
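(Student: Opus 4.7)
The plan is to build the isomorphism in (i) by transferring a central endomorphism of $M$ to its trace ideal via the reflexivity isomorphism $M \cong M^{**}$, and then to deduce (ii) by identifying all three rings as a common subring of the total quotient ring $Q := Q(R)$.

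For (i), I will define $\Phi \colon \End R {\T R M} \to \Z{\End R M}$ as follows. Given $\phi \in \End R {\T R M}$, use $M \cong M^{**} = \hom R {M^*} R$ to regard $m \in M$ as the evaluation functional $\mathrm{ev}_m \colon M^* \to R$; its image lies in $\T R M$, so $\phi \circ \mathrm{ev}_m$ defines a new element of $M^{**} \cong M$, which I call $\tilde\phi(m)$. The map $\tilde\phi$ is $R$-linear and commutes with each $\gamma \in \End R M$, since the induced action of $\gamma$ on $M^{**}$ is precomposition by $\gamma^* \colon M^* \to M^*$, which trivially commutes with post-composition by $\phi$. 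Injectivity of $\Phi$ is immediate: $\tilde\phi = 0$ forces $\phi$ to vanish on every generator $\alpha(m)$ of $\T R M$. For surjectivity, given $\psi \in \Z{\End R M}$, I set $\phi(\sum_i \alpha_i(m_i)) := \sum_i \alpha_i(\psi(m_i))$; well-definedness is the delicate point, handled by applying centrality of $\psi$ against the rank-one endomorphisms $\theta_{\alpha,n} \colon m \mapsto \alpha(m) n$ to extract the identity $\alpha(\psi(m))n = \alpha(m)\psi(n)$, summing over a vanishing relation, and invoking faithfulness of $M$.

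For (ii) the approach is different: recognize each of the three rings as a subring of $Q$ and verify they coincide there. First, $M^*$ is faithful, because any annihilator of $M^*$ annihilates $\T R M$, which contains a nonzerodivisor by hypothesis. The positive grade assumption embeds $\End R {\T R M}$ canonically into $Q$ as $(\T R M : \T R M)_Q := \{q \in Q : q\,\T R M \subseteq \T R M\}$; a short calculation using $(q\alpha)(m) = q\alpha(m)$ shows this coincides with $(M^* : M^*)_Q := \{q \in Q : qM^* \subseteq M^*\}$. Applying the same identification with $M^*$ in place of $M$ gives $\End R {\T R {M^*}} = (M^{**} : M^{**})_Q$, and a brief dualization argument (scalar multiplication by $q \in Q$ commutes with $(-)^*$, and the containment $\T R M \subseteq \T R {M^*}$ controls the reverse direction) produces $(M^* : M^*)_Q = (M^{**} : M^{**})_Q$. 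This supplies the first isomorphism in (ii). For the second, I must show $\Z{\End R {M^*}} = (M^* : M^*)_Q$: the containment $\supseteq$ is clear, since scalar multiplication from $Q$ that preserves $M^*$ is manifestly central.

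The reverse containment is the main obstacle: every central $R$-endomorphism of $M^*$ must be shown to be scalar multiplication by some $q \in Q$. My plan is to apply centrality of $\psi \in \Z{\End R {M^*}}$ against the rank-one endomorphisms $\gamma_{\alpha', m'} \colon \beta \mapsto \beta(m')\alpha'$ of $M^*$, obtaining the identity $\beta(m')\psi(\alpha')(m) = \psi(\beta)(m')\alpha'(m)$ in $R$. Specializing $\alpha' = \beta$ and using the positive grade of $\T R M$ to find $\beta, m'$ with $\beta(m')$ a nonzerodivisor forces the quotient $\psi(\beta)(m)/\beta(m) \in Q$ to be independent of $m$; a further specialization yields independence of $\beta$, producing a single $q \in Q$ with $\psi(\alpha) = q\alpha$ for every $\alpha \in M^*$. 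Since $qM^* \subseteq M^*$ by construction, $q \in (M^* : M^*)_Q$, completing the identification. Chaining the four identifications of subrings of $Q$ then delivers both isomorphisms in (ii) simultaneously.
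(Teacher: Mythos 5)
Your plan for (i) is correct and is, in substance, the paper's own proof of Theorem \ref{main}. Your $\Phi$ is the map $\sigma$ obtained by dualizing the surjection $\MO \to \T R M$ and using reflexivity: the characterization $\alpha\bigl(\Phi(\phi)(m)\bigr)=\phi(\alpha(m))$ is exactly Lemma \ref{B}, and your well-definedness argument for the inverse via the rank-one endomorphisms $\theta_{\alpha,n}$ is precisely Suzuki's argument (Theorem \ref{Suzuki1} with $U=\T R M$): the maps $d\colon \T R M\to M$, $t\mapsto tn$, satisfy $d\circ\alpha=\theta_{\alpha,n}$, and faithfulness of $M$ enters exactly where the paper uses that $\T R M$ is $M$-torsionless. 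Do finish surjectivity by noting $\alpha\bigl(\Phi(\phi)(m)\bigr)=\phi(\alpha(m))=\alpha(\psi(m))$ for all $\alpha$, so $\Phi(\phi)=\psi$ by torsionlessness; that and multiplicativity of $\Phi$ are one-liners. For (ii), your identifications inside $Q(R)$ are sound and reproduce Remark \ref{identifyend}, Theorem \ref{main2} and Corollaries \ref{alliso}, \ref{identify}: the equality of $\set{q : q\,\T R M\subseteq \T R M}$ with $\set{q: qM^*\subseteq M^*}$ checks in both directions, the passage to $M^{**}$ works as you indicate (the forward direction by writing $q=a/b$ with $b$ a nonzerodivisor, the reverse via $\T R M\subseteq \T R {M^*}$ or the evaluation map), and centrality of multiplication by $q$ uses only torsionfreeness of $M^*$, as in Proposition \ref{image2}. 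Your route differs from the paper's mainly in proving directly that every central endomorphism of $M^*$ is multiplication by an element of $Q(R)$, instead of invoking Suzuki's generation argument; that is a legitimate and arguably more self-contained alternative.

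The one genuine gap is in that last step. You propose to ``use the positive grade of $\T R M$ to find $\beta, m'$ with $\beta(m')$ a nonzerodivisor.'' Positive grade gives a nonzerodivisor in the ideal $\T R M$, i.e.\ an element of the form $x=\sum_{i=1}^n\beta_i(m_i')$; it does not produce a single value $\beta(m')$ that is a nonzerodivisor, and nothing in your setup guarantees such a pair exists (each individual value may lie in an associated prime even though the ideal they generate does not). As written, the specialization step does not go through. Fortunately your own identity repairs it: from centrality against $\gamma_{\alpha',m_i'}$ you have $\beta_i(m_i')\,\psi(\alpha')(m)=\psi(\beta_i)(m_i')\,\alpha'(m)$; summing over $i$ gives $x\,\psi(\alpha')(m)=c\,\alpha'(m)$ with $c:=\sum_i\psi(\beta_i)(m_i')\in R$, hence $\psi(\alpha')=(c/x)\,\alpha'$ for every $\alpha'\in M^*$, and $q:=c/x\in Q(R)$ satisfies $qM^*\subseteq M^*$ automatically since $\psi(\alpha')\in M^*$. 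This also makes the ``further specialization'' for independence of $\beta$ unnecessary. With that patch, your chain of identifications closes and both isomorphisms in (ii) follow as you claim.
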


These and other results relating $\Z{\End R M}$ to $\End R {\T R M}$ are established in Section \ref{mainsection}. The remainder of the paper applies these results  in various settings. 

Section \ref{freeEnd} is concerned with modules that have $R$-free endomorphism rings.  We extend \cite[Theorem 3.1]{Vasc1}: 

\begin{Theorem}
 Let $R$ be a local Noetherian ring of depth $\leq 1$ and $M$ a finitely generated reflexive $R$-module. If $\End R M$ has a free summand then so does $M$. Therefore, $\End R M$ is a free $R$-module only if $M$ is a free $R$-module.
\end{Theorem}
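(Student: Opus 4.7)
Over the local ring $R$, $M$ has a free summand if and only if $\T R M = R$. Indeed, if $1 \in \sum_{\alpha} \alpha(M)$ then localness forces some single $\alpha(M) = R$, giving a split surjection $\alpha \colon M \twoheadrightarrow R$; the converse is immediate. It therefore suffices to prove $\T R M = R$.

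To get the main results into play I first argue $M$ is faithful. Since $\End R M$ has a free $R$-summand, $\Ann{\End R M} = 0$; and $\Ann{\End R M} = \Ann M$ (an element of $R$ kills every endomorphism if and only if it kills $\mathrm{id}_M$). Next, since $M$ is reflexive it is torsionless, and a direct computation gives $\Ann{\T R M} = \Ann M$: if $r\T R M = 0$ then $\alpha(rm) = 0$ for every $\alpha \in M^*$ and $m \in M$, forcing $rM \subseteq \bigcap_{\alpha} \ker \alpha = 0$. Thus $\T R M$ is a faithful ideal, hence (by prime avoidance) not contained in any associated prime of $R$, so it contains a nonzerodivisor. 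In particular $\grade{\T R M} \geq 1$, and Theorem \ref{main}(i) yields the canonical identification $\Z{\End R M} \cong \End R {\T R M}$ as subrings of $Q(R)$ containing $R$.

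The crux is to deduce $\T R M = R$ from the free $R$-summand of $\End R M$. The natural route exploits the identification $\End R M \cong \hom{R}{M \otimes_R M^*}{R}$ valid for any reflexive $M$ (apply Hom--tensor adjunction to $\End R M = \hom{R}{M}{M^{**}}$). Under this identification, $\phi \in \End R M$ corresponds to the functional $m \otimes \alpha \mapsto \alpha(\phi(m))$, whose image in $R$ lies in $\T R M$. An $R$-module splitting $\pi \colon \End R M \twoheadrightarrow R$ supplied by the free summand is then an element of $(M \otimes_R M^*)^{**}$; if $\pi$ is represented by some $\sum_{i} m_i \otimes \alpha_i \in M \otimes_R M^*$ via the biduality map, then $\pi(\phi) = \sum_i \alpha_i(\phi(m_i)) \in \T R M$ for every $\phi$, and so $R = \IM{\pi} \subseteq \T R M$.

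The main obstacle is precisely this representability, i.e.\ the surjectivity of the biduality map $M \otimes_R M^* \to (M \otimes_R M^*)^{**}$. This is where the hypothesis $\mathrm{depth}\, R \leq 1$ should enter: combined with reflexivity of $M$, it controls the torsion obstructions that would otherwise block the biduality map from reaching $\pi$. A possible alternative is to work entirely inside $Q(R)$ via Theorem \ref{main}(i), translating the free $R$-summand of $\End R M$ into an assertion about $\End R {\T R M}$ and then using the depth hypothesis together with the faithfulness and positive grade of $\T R M$ to force $\End R {\T R M} = R$, whence $\T R M = R$.
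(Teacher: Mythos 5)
There is a genuine gap, and it sits exactly where you flag it. Your main route reduces the theorem to representing the splitting $\pi \in \hom R {\End R M} R \cong (\MO)^{**}$ by an actual element of $\MO$, i.e.\ to the surjectivity of the biduality map $\varepsilon_{\MO}\colon \MO \to (\MO)^{**}$ (or at least to $\pi$ lying in its image), and no argument is given: reflexivity of $M$ does not pass to $\MO$, and the cokernel of $\varepsilon_{\MO}$ is an $\operatorname{Ext}^2$-module (of the Auslander transpose of $\MO$) into $R$, which does not vanish merely because $\operatorname{depth} R \le 1$. Worse, if representability came for free, your argument would prove $\T R M = R$ without using the depth hypothesis at all, whereas the statement genuinely fails in higher depth (e.g.\ a non-principal divisorial ideal $M$ over a two-dimensional normal hypersurface has $\End R M \cong R$ free while $M$ has no free summand), so the unproved step is precisely where the content lies. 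Your ``possible alternative'' is indeed the right strategy, but it is left as a sentence: a free $R$-module summand of $\End R M$ is not an assertion about $\Z{\End R M}$, so Theorem \ref{main}(i) by itself does not translate it into a statement about $\End R {\T R M}$; the missing translation is the heart of the proof.

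For comparison, the paper's argument runs: the free summand gives $\T R {\End R M} = R$ (Proposition \ref{trace properties}\ref{trace R}); since $M$ is reflexive and (as you correctly showed) faithful, $\T R M = \T R {\MO}$ has positive grade and $\End R M \cong {(\MO)}^*$, so Corollary \ref{identify} (a consequence of Theorem \ref{main2}, applied to the module $\MO$ rather than to $M$) yields, inside $Q(R)$,
\[
{\T R M}^* \;=\; \End R {\T R M} \;=\; \End R {\T R {\MO}} \;=\; \End R {\T R {(\MO)^*}} \;=\; \End R {\T R {\End R M}} \;=\; \End R R \;=\; R,
\]
and then Lemma \ref{dualR} --- the only place $\operatorname{depth} R \le 1$ is used --- gives $\T R M = R$, hence the free summand of $M$. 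Note also that your write-up never addresses the final assertion: once $M = N \oplus R$, one has $\End R M \cong M \oplus \hom R N M$, so if $\End R M$ is free then $M$ is a summand of a free module and hence free; this is short but should be said.
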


Sections \ref{balanced}  and \ref{HWsection} apply our results to balanced and rigid modules; these are modules where $\Z{\End R M} = R$ and modules such that $\Ext 1 R M M =0$ respectively. For example, we prove the result below:

\begin{Theorem} \label{HW} Let $R$ be a one-dimensional Gorenstein local ring and $M$ a torsionfree faithful $R$-module. If  $M$ is rigid and $\Z {\End R M}$ is Gorenstein, then $M$ has a free summand.
\end{Theorem}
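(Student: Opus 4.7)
The proof reduces to showing $I := \T R M = R$. In a local ring this is equivalent to $M$ having a free summand: $I = R$ gives $1 = \sum_i \alpha_i(m_i)$ with some $\alpha_i(m_i)$ a unit, yielding a split surjection $M \twoheadrightarrow R$ and hence a free $R$-summand of $M$.

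Since $R$ is one-dimensional Gorenstein local and $M$ is torsionfree and faithful, $M$ is maximal Cohen--Macaulay and reflexive, and $I$ contains a non-zero-divisor. Theorem \ref{main}(i) then gives a canonical isomorphism $S := \Z{\End R M} \cong \End R I$; identifying $S$ inside the total quotient ring $Q$ with the idealizer $(I :_Q I)$, one has $R \subseteq S \subseteq Q$, a finite birational extension with $\End S I = S$. Since $S$ is Gorenstein, one-dimensional, and semi-local, $I$ is $S$-invertible and hence principal: $I = aS$ for some $a \in I$.

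Under these conditions, $I = R$ is equivalent to $S = R$. For the forward direction, $I = R = aS$ means $a \in S^{\times}$, so $1/a \in S = (I :_Q I)$ gives $(1/a)I = S \subseteq I$, whence $I = S$ and thus $R = I = S$. Conversely, if $S = R$ then $I = aR$ is principal, and the trace-ideal property forces every $\alpha \in M^{*}$ to factor as $a\beta_{\alpha}$ with $\beta_{\alpha} \in M^{*}$; hence $M^{*} = aM^{*}$, and Nakayama's lemma together with $M^{*} \neq 0$ (from torsionfreeness and faithfulness of $M$) forces $a$ to be a unit, so $I = R$.

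The proof thus reduces to showing $S = R$, and this is where the rigidity hypothesis $\Ext{1}{R}{M}{M} = 0$ must intervene---the main obstacle. I would argue by contradiction: if $R \subsetneq S$, then $S/R$ is a non-zero finite-length $R$-module, and the non-split sequence $0 \to R \to S \to S/R \to 0$, combined with the central $S$-action on $M$ through $S \hookrightarrow \End R M$, should induce a non-zero class in $\Ext{1}{R}{M}{M}$, contradicting rigidity. Making this rigorous, most likely via a Yoneda or change-of-rings computation that exploits the Gorenstein duality $\hom{R}{S}{R} \cong S$ (valid because $S$ is its own canonical module and $\hom{R}{S}{R}$ is the canonical $S$-module), is the technical core of the proof.
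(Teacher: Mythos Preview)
Your overall framework matches the paper's: set $I := \T R M$, identify $S := \Z{\End R M}$ with $\End R I \subseteq Q(R)$ via Theorem~\ref{main}, and reduce the problem to proving $S = R$. You also correctly arrive at $I \cong S$ as $S$-modules, though the justification you give (``$I$ is $S$-invertible because $S$ is Gorenstein, one-dimensional, semi-local'') is not valid as stated --- one-dimensional Gorenstein local rings have plenty of non-invertible ideals, and $\End S I = S$ by itself does not force invertibility. The correct reason is the one you relegate to the final sentence: $I$ is $R$-reflexive with $I^{*} = \End R I = S$, so $I \cong \hom R S R$, which is the canonical $S$-module and is isomorphic to $S$ precisely because $S$ is Gorenstein. (The paper completes first so that $S$ splits as a finite product of complete local Gorenstein rings, making this identification clean.)

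The genuine gap is your use of rigidity. You assert that the non-split extension $0 \to R \to S \to S/R \to 0$, combined with the central $S$-action on $M$, ``should induce'' a nonzero class in $\Ext 1 R M M$, but you give no mechanism, and none is apparent: applying the natural functors to that sequence lands in $\Ext 1 R {S/R} M$, $\Ext 1 R M {S/R}$, or $\Ext 1 R S S$, never directly in $\Ext 1 R M M$, and the change-of-rings spectral sequence for $R \to S$ does not extract such an obstruction either. The paper's argument runs in the opposite direction. Because $I \cong S$ and $M$ generates $I$, there is an $S$-linear (hence $R$-split) surjection $M^{t} \surj S$, so $S$ is an $R$-summand of $M^{t}$ and rigidity \emph{transfers}: $\Ext 1 R S S = 0$. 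The Huneke--Jorgensen theorem (Remark~\ref{HJ}) then gives that $S \otimes_R S^{*}$ is torsionfree, whence by Lemma~\ref{same} one has $S \otimes_R S^{*} = S \otimes_S S^{*} \cong S^{*}$; comparing minimal numbers of $R$-generators yields $\beta_R(S)\,\beta_R(S^{*}) = \beta_R(S^{*})$, so $\beta_R(S) = 1$ and hence $S = R$. The two ingredients you are missing --- realizing $S$ as a summand of a power of $M$, and the Huneke--Jorgensen torsion criterion --- are exactly what connect the hypothesis $\Ext 1 R M M = 0$ to the conclusion $S = R$.
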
 

When $M$ is an ideal, this result has been discovered independently, using different techniques, by Huneke, Iyengar and Wiegand in \cite{HIW}. These results support a conjecture of  Huneke and Wiegand (\cite[pgs 473-474]{HWrigidityoftor}) and we further prove that the conjecture holds for trace ideals; see Proposition \ref{HWconjtrace}.

\section{Trace Ideals} \label{properties}

In this section we collect those observations about trace ideals that are needed in subsequent sections. Throughout $R$ will  be a commutative ring and $M$ an $R$-module. Set  \[M^*:=\hom R M R,\] viewed as an $R$-module.  The \emph{trace map} is the map
\begin{equation}\label{theta}
\begin{tabular}{rccc}
$\vartheta_M$: & $\MO$ & $\lra $ & $R$\\
& $m\otimes \alpha$ & $ \longmapsto$ & $\alpha(m),$

\end{tabular}
\end{equation}

and the image of trace map is the {trace ideal} of $M$ since 
\begin{align*}
 \IM {\vartheta_M} &=\left \{\sum_i^n \alpha_i(m_i) | \alpha_i \in M^*, m_i \in M\right \}\\
 &= \sum_{\alpha \in M^* } \alpha (M)  \\
 &= \T R M .
\end{align*}

There is  a natural \emph{evaluation map}

\begin{center}
\begin{tabular}{cccc}
$\varepsilon_M:$&$M$ & $\lra $ & $ ({M^*})^*$\\
&$m$& $\mapsto$ & $\{ \psi \mapsto \psi(m)\}$\\
\end{tabular}
\end{center}

\begin{definition} The $R$-module $M$ is said to be \emph{torsionless} if $\varepsilon_M$ is an injection, and \emph{reflexive} if $\varepsilon_M$ is an isomorphism.  \end{definition}

When $M$ is reflexive,  Hom-Tensor adjointness yields the isomorphisms:
\begin{equation} \label{bigiso} 
\xymatrix @R=.1pc{
 \hom R M M \ar@[->][r]^{\cong} \ar@/^2pc/[rr]^{\vartheta}&{\hom R M {M^{**}}} \ar@[->][r]^{\cong} & {\hom R \MO R} \\
 \{f\} \ar@{|->}[r]& \{m \mapsto \varepsilon_M(f(m))\} \ar@{|->}[r]&  \{m \otimes \alpha \mapsto \alpha(f(m))\}.
}
\end{equation}

Notice, the trace map is the image of the identity endomorphism, that is \[\vartheta_M = \vartheta (\mbox{id}_M).\]  For ease of notation we write  $\vartheta_f$ for $\vartheta(f)$  given any non-identity $f$ in $\hom R M M$, so that 
\begin{equation}\label{phi}\vartheta_f(m\otimes \alpha) := \alpha(f(m))\end{equation}
for any $m \otimes \alpha$  in $\MO$.

As suggested in \cite{OrderIdeals},  one source of trace ideals is ideals of sufficiently high grade. 

\begin{definition}
Let $R$ be a Noetherian ring. The \emph{grade} of an ideal $I\subseteq R$, denoted $\grade I$, is the common length of the maximal $R$-sequences contained in $I$. 
\end{definition}

\begin{remark}
Grade is often calculated through the vanishing of certain Ext modules. In particular, using \cite[Theorem 1.2.5]{BandH} one has
\[\grade I = \mbox{min}\{ i | \Ext i R {R/I} R \not= 0\}.\]
\end{remark}

\begin{example}\label{grade} Let $R$ be a commutative Noetherian ring and $I$ an ideal with \[\grade I \geq 2.\]  Then $\T R I = I.$

Applying $\hom R ? R$ to the short exact sequence \[0 \lra I \lra R \lra R/I \lra 0\] yields  an exact sequence  \[0 \lra \hom R {R/I} R  \lra \hom R R R \lra \hom R I R \lra \Ext 1 R {R/I} R. \]

Since $\grade I \geq 2$ \[\Ext 1 R {R/I} R = 0  =\hom R {R/I} R\]and so all homomorphisms from $I$ to $R$ are given by multiplication by an element of $R$. The images of all such homomorphisms are contained in $I$; therefore $\T R I = I.$

\end{example}

\begin{remark} \label{tracepresmatrix}
We may calculate the trace ideal of a module from  its presentation matrix.  Indeed, suppose $A$ is a presentation matrix for a module $M$ and $B$ is a matrix whose columns generate the kernel of $A^*$, the transpose of $A$. Then there is an equality:
\[ \T R M = I_1(B);\]

where $I_1(B)$ is the ideal generated by the entries of $B$; see,   \cite[Remark 3.3]{Vascaffine}.
\end{remark}

The trace ideal of $M$ is the largest ideal (with respect to inclusion) generated by $M$ in the following sense:  

\begin{definition}\label{gen}
Let $M$ and $N$ be $R$-modules. We say $N$ is \emph{generated} by $M$ if  $N$ is the homomorphic image of a direct sum of copies of $M$. Said otherwise, there is an exact sequence
\[ M^{(\Lambda)} \lra N \lra 0 \]
for some index set $\Lambda$. 

An $R$-module $M$ is a \emph{generator} for the category of finitely generated $R$-modules (denoted $R$-mod), if $M$ is finitely generated and generates every finitely generated $R$-module.  Equivalently,  $M$ is a \emph{generator} if $R$ is a direct summand of $M^n$ for some $n \in \N$. 
\end{definition}

\begin{remark}\label{Mgenstrace} $M$ generates $\T R M$ and $M \otimes_R N$ for any $R$-module $N$.

\end{remark}

In the next proposition we collect the basic properties of trace ideals needed in this work.  Proofs are given for lack of a comprehensive reference. 

\begin{proposition}\label{trace properties} Let  $M$  be a finitely presented $R$-module. The following hold:
\begin{enumerate}[label=(\roman*)]
\item\label{trace generation} \label{trace isomorphic} If $M$ generates $N$, then $\T R M \supseteq \T R N $. 
\item\label{trace sums} $\T R {M\oplus N} = \T R M + \T R N$;
\item\label{trace R} $\T R M = R$ if and only if  $M$ generates all finitely generated $R$-modules. When $R$ is local, this is equivalent to $M$ having a nonzero free summand;
\item\label{trace contains} $I \subseteq \T R I $ for ideals $I$, with equality when $I$ is a trace ideal;
\item\label{trace shared} $\T R {\MO}= \T R {M} \subseteq \T R {M^*}$, with equality when $M$ is reflexive;
\item\label{trace dual}  $\End R {\T R M}= {\T R M}^* $;
\item\label{trace grade}  $\Ann {\T R M} = \Ann M$ when $M$ is reflexive. When, in addition, $R$ is Noetherian and  $M$ is faithful, then  $\T R M$ has positive grade;
\item\label{trace extend}  $\T R M  \otimes_R A =  \T { A} {M \otimes_R A}$ for any finitely presented $R$-module $M$ and any commutative flat $R$-algebra $A$. In particular, taking the trace ideal commutes with localization and completion.
\end{enumerate}
\end{proposition}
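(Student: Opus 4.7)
I would handle the eight parts in four groups, each using a different core technique.

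Items (i)--(iv) follow from direct manipulations with generating families and $R$-linear combinations. For (i), pick a surjection $\pi\colon M^{(\Lambda)} \twoheadrightarrow N$; for any $\alpha \in N^*$ and $n = \pi(x) \in N$, write $x$ as a finite sum of elements in copies of $M$, so $\alpha(n)$ becomes an $R$-linear combination of values $(\alpha \circ \pi_\lambda)(m_\lambda)$ with each $\alpha \circ \pi_\lambda \in M^*$, hence $\alpha(n) \in \T R M$. Item (ii) drops out of the canonical identification $(M \oplus N)^* = M^* \oplus N^*$. For (iii), the ``only if'' direction: $\T R M = R$ yields $1 = \sum_{i=1}^n \alpha_i(m_i)$, which produces a split surjection $M^n \twoheadrightarrow R$ (splitting because $R$ is free), exhibiting $R$ as an $R$-summand of $M^n$; since $R$ generates every finitely generated module and $M$ generates $M^n$, transitivity of generation does the rest. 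The ``if'' direction is (i) applied with $N = R$. The local refinement---``if $R$ is a summand of $M^n$ then $M$ has a free summand''---I would handle by idempotent-lifting in the local ring $\End R M$ applied to a minimal generating set of $M$. For (iv), the inclusion $I \hookrightarrow R$ lies in $I^*$ with image $I$, giving $I \subseteq \T R I$; and when $I = \T R M$ is itself a trace ideal, every $\alpha \in I^*$ satisfies $\alpha(I) \subseteq \sum_\beta (\alpha \circ \beta)(M) \subseteq \T R M = I$.

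For items (v) and (vi) I would lean on Remark~\ref{Mgenstrace} and part (iv). The remark tells us $M$ generates $\MO$, so (i) gives $\T R {\MO} \subseteq \T R M$; the reverse inclusion is the very definition of the trace map as a surjection onto $\T R M$. Applying the remark to $M^*$ in place of $M$ yields $\T R M = \T R {\MO} \subseteq \T R {M^*}$, with equality under reflexivity by the symmetric argument using $M^{**} \cong M$. Part (vi) is immediate from (iv) applied to $I = \T R M$: the equality $\T R {\T R M} = \T R M$ shows that each $\alpha \in (\T R M)^*$ carries $\T R M$ into itself, so the natural inclusion $\End R {\T R M} \hookrightarrow (\T R M)^*$ is onto.

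For (vii), the containment $\Ann{M} \subseteq \Ann{\T R M}$ is immediate since $\alpha(rM) = r\alpha(M)$ for every $\alpha \in M^*$. For the reverse, reflexivity forces torsionlessness; if $r \in \Ann{\T R M}$, then for every $\psi \in M^*$ and every $m \in M$, $\psi(rm) = r\psi(m) = 0$, so $\varepsilon_M(rm) = 0$ and injectivity of $\varepsilon_M$ yields $rm = 0$. In the Noetherian faithful case this gives $\Ann{\T R M} = 0$, so $\T R M$ avoids every associated prime of $R$; since the set of associated primes is finite, prime avoidance produces a nonzerodivisor inside $\T R M$.

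For (viii), the plan is to apply the flat functor $- \otimes_R A$ to the right-exact trace sequence $\MO \xrightarrow{\vartheta_M} R \to R/\T R M \to 0$. Flatness preserves images, so the extended map has image $\T R M \otimes_R A$. On the source, the standard flat base-change isomorphism $\hom R M R \otimes_R A \cong \hom A {M \otimes_R A} A$---valid for $M$ finitely presented and $A$ flat---identifies $(\MO) \otimes_R A$ with $(M \otimes_R A) \otimes_A (M \otimes_R A)^*$ in a way that turns $\vartheta_M \otimes A$ into $\vartheta_{M \otimes_R A}$; hence the image is also $\T A {M \otimes_R A}$. The main obstacles I anticipate are the compatibility check in (viii), where finite presentation is essential both to identify the extended source and to match the trace maps, and the local descent step in (iii), where one must carefully pass from a free summand of $M^n$ to a free summand of $M$.
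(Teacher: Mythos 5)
Your treatment of (i), (ii), (iv)--(viii) is essentially the paper's own argument (your (vii) is a mild variant: you deduce $\Ann{\T R M}\subseteq \Ann M$ directly from torsionlessness of $M$, rather than identifying $\Ann{\T R M}$ with $\Ann{\End R M}$ via reflexivity as the paper does --- that is fine, and in fact needs only torsionless; your (viii) is the same flat base-change computation the paper records in its diagram). The one step that does not go through as written is the local refinement of (iii). You propose to pass from a free summand of $M^n$ back to a free summand of $M$ by ``idempotent-lifting in the local ring $\End R M$.'' But $\End R M$ is not local in general (it is typically a noncommutative semilocal ring), and idempotent-lifting or Krull--Schmidt-type cancellation arguments for finitely generated modules require completeness or Henselian hypotheses; over an arbitrary Noetherian local ring this descent cannot be justified by that route. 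So as stated this step would fail, even though the conclusion you want is true.

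The fix is immediate from data you already have, and it is exactly the paper's argument: from $\T R M = R$ write $1=\sum_{i=1}^n \alpha_i(m_i)$ with $\alpha_i\in M^*$, $m_i\in M$. Since $R$ is local, the sum of the $\alpha_i(m_i)$ being a unit forces some single $\alpha_i(m_i)$ to be a unit; then $\alpha_i\colon M\to R$ is surjective, and since $R$ is free this surjection splits, exhibiting a free summand of $M$ directly. No detour through $M^n$, and no cancellation or idempotent-lifting, is needed.
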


\begin{proof}
\ref{trace generation}: Any $R$-homomorphism $\alpha \in N^*$ can be composed with the surjection  \[M^{(\Lambda)} \overset{\gamma}{\lra} N \lra 0\]

for a given index set $\Lambda$. As $\IM {\alpha \gamma} \supseteq \IM \alpha$ it follows that $\T R N \subseteq \T R M$. 

\ref{trace sums}: This is clear.

\ref{trace R}:  Suppose $M$ is a generator in $R$-mod. Then, in particular, $M$ generates $R$. 
 By \ref{trace generation}, $\T R M \supseteq \T R R  = R$.
 
 If $\T R M = R$, then by Remark \ref{Mgenstrace} $M$ generates $R$. Since $R$ generates $R$-mod, $M$ too is a generator. This argument also works over noncommutative rings; see \cite[Theorem 18.8]{Lam}.

When $R$ is local, 
if $M = N \oplus R$, then $M$ clearly generates $R$. Now assume $\T R M = R$. There must exist $\alpha_i \in M^*$ and $m_i \in M$ such that \[1 = \displaystyle\sum_{i=1}^n \alpha_i(m_i).\] So  at least one $\alpha_i(m_i)$ is a unit. For such an $i$, the map $\alpha_i: M \lra R $ is surjective  and thus $M$ has a free summand.

\ref{trace contains}: For an ideal $I\subseteq R$, the inclusion map is an element of $I^*$ and therefore $I \subseteq \T R I$. In particular $\T R M \subseteq \T R {\T R M}$. The reverse inclusion holds because  $M$ generates $\T R M$; see \ref{trace generation}. 

\ref{trace shared}: Recall that $M$ generates $\MO$, and $\MO$, in turn, generates $\T R M$ (see the map $\vartheta_M$ from \ref{theta}), so one has 
\begin{align*}
\T R M &\supseteq \T R {\MO} & \text{by } \ref{trace generation}\\
&\supseteq \T R {\T R M} & \text{by }  \ref{trace generation}\\
&= \T R M & \text{by }  \ref{trace contains}
\end{align*}
 and  $\T R M = \T R {\MO}$.
 
Given that $M^*$ also generates $\MO$ one gets \[ \T R {M^*}\supseteq \T R {\MO} =\T R M .\]

When $M$ is reflexive, applying the above to $M^*$ one has \[ \T R M = \T R {M^{**}} \supseteq \T R {M^*} \supseteq \T R M .\]

\ref{trace dual}: Given $\alpha \in {\T R M}^*$,  one has \[\IM \alpha \subseteq \T R {\T R M} = \T R M.\] It follows that ${\T R M}^* = \End R {\T R M}$.

\ref{trace grade}:  Given $\psi \in \hom R {\MO} R$,  recall that \[\IM \psi \subseteq  \T R {\MO} = \T R M.\] This justifies the first of the following equalities,
\begin{align*}
\Ann {\T R M} &=  \Ann {\hom R \MO R} \\
& =   \Ann {\End R M} & \because M \text{ is reflexive} \\
& =   \Ann M 
\end{align*}

Assume, in addition, that $R$ is Noetherian and $M$ is faithful. This implies  that $\Ann {\T R M} = 0$, and so  $\T R M$ is not contained in any associated prime of $R$. By prime avoidance, $\T R M$ is not contained in the union of the associated primes of $R$.  Therefore $\T R M$ must contain a nonzerodivisor. 

\ref{trace extend}: Let $\lambda: R \lra A$ be a homomorphism of commutative rings with $A$ flat over $R$.
Since $A$ is flat,  the inclusion $\T R M \subseteq R$ yields the inclusion $$ 0 \lra \T R M \otimes _R A  \lra R \otimes_R A = A.$$

There are isomorphisms 
\begin{align*}
 \MO \otimes_R A  &\cong  \left(M \otimes_R A\right) \otimes_A \left(M^* \otimes_R A\right)\\
&\cong \left(M\otimes_R A\right) \otimes_A  \hom A {M \otimes_R A}{R\otimes_R A} ;
\end{align*}
where the second isomorphism holds because $M$ is finitely presented. 

Let $a,b \in A$, $\alpha \in M^*$ and $m \in M$. Together with the trace maps of $M$ as an $R$-module and $M \otimes_R A$ as an $A$-module,  we obtain a commutative diagram:
\begin{figure}[h]
\resizebox{13cm}{!}
{
$$\xymatrix@C=1em{ 
{\left( \MO \right)\otimes_R A}\ar@{->>}[rrr]^{\vartheta_M \otimes_R 1_A}   &&& \T R M  \otimes_R A \ar@{_{(}->}[dd] \\
& m \otimes_R \alpha \otimes_R ba  \ar@{|->}[r]&\alpha(m) \otimes_R ba   \ar@{|->}[d]&\\
\left(M \otimes_R A\right) \otimes_A \left(M^* \otimes_R A\right)\ar@{->}[dd]_{\cong} \ar@{->}[uu]^{\cong}&  (m \otimes_R a) \otimes_A (\alpha \otimes_R b)  \ar@{|->}[u] \ar@{|->}[d]& ab \lambda(\alpha(m) ) & A  \\
& (m\otimes_R a) \otimes_A b \cdot (\alpha \otimes_R 1_A)  \ar@{|->}[r]&  \alpha(m) \otimes_R ab \ar@{|->}[u] &\\
\left(M\otimes_R A\right) \otimes_A  \hom A {M \otimes_R A} {R\otimes_R A}  \ar@{->>}[rrr]^{\hspace{.6in} \vartheta_{(M\otimes_R A)}} &&&\T {R\otimes_R A} {M \otimes_R A} \ar@{^{(}->}[uu]
}$$
}
\end{figure}

 This demonstrates the desired equality as subsets of $A$:  \[ \T R M  \otimes_R A =  \T { A} {M \otimes_R A}\]
 
The left side representing the extension of the trace ideal of the $R$-module $M$,  to the ring $A$ and the right side being the trace ideal of  the $A$-module $M \otimes_R A$. 
\end{proof}

\section{Centers of Endomorphism Rings} \label{mainsection}
Let $R$ be a commutative ring and $M$ a finitely generated $R$-module. In what follows,  $\Z S$  denotes the center of a ring $S$ and $Q(S)$ denotes the total ring of quotients. We write $\beta_R(M)$ for the minimal number of generators of $M$.

In  Theorem \ref{main}, the first main result of this section, we construct an $R$-algebra isomorphism, $\sigma$, between $\End R {\T R M}$ and $\Z{\End R M}$ when $M$ is reflexive and faithful. We call on the properties of the trace ideal from Section \ref{properties} as well as results established over non-commutative rings by Suzuki in \cite{Suzuki1974}. For ease of reference, we include a proof of Suzuki's result below. 

 If $M$ is reflexive, there is a monomorphism from $\End R  {\T R M}$ to $\End R M$. Indeed, one may apply $\hom R {\,?\,} R $ to  the exact sequence \begin{equation}\label{surj} \MO \overset{\vartheta_M}{\lra} \T R M  \lra 0,\end{equation}

to obtain the top row of the diagram below.
\begin{equation}\label{inj1}
\xymatrix{
0 \ar@{->}[r] & \hom R {\T R M} R  \ar@{->}[d]^{=}_{\ref{trace properties} \ref{trace dual}}  \ar@{->}[r]^{\vartheta_M^* } &  \hom R \MO R  \ar@{->}[d]^{\vartheta^{-1} \text{ in }\ref{bigiso}}_{\cong}\\
& \End R {\T R M} \ar@{^{(}-->}[r]^{\sigma} & \End R M 
}
 \end{equation}
First we will show that the induced map $\sigma$ is an $R$-algebra monomorphism with image in the center of $\End R M$. Then we prove that when $M$ is also faithful, $\sigma$ is an isomorphism onto the center of $\End R M$. 

In what follows, we identify each $\alpha \in M^*$ with the map $\alpha : M \lra \T R M$.

\begin{lemma}\label{B} Let $M$ be a reflexive $R$-module.  The image of $\sigma$ in (\ref{inj1}) is the set 
  $$  \{ f\in \End R M : \exists ! \, \tilde{f}   \mbox{ s.t. the square below commutes} \Fa \alpha \in M^* \}$$
$$
 \xymatrix{
M \ar@{->}[d]^{\alpha}\ar@{->}[r]^f \null& M \ar@{->}[d]^{\alpha}\\
\T R M  \ar@{-->}[r]^{\tilde{f}} & \T R M 
} 
$$
\end{lemma}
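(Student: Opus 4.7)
The plan is to trace through the identifications that define $\sigma$ in (\ref{inj1}), then verify both inclusions by elementary diagram chasing. First I would unpack what $\sigma(g)$ is explicitly. Given $g \in \End R {\T R M}$, identify it with an element of $\hom R {\T R M} R$ via Proposition \ref{trace properties}\ref{trace dual}; its pullback $\vartheta_M^*(g) \in \hom R \MO R$ is the map $m \otimes \alpha \mapsto g(\alpha(m))$. Transporting back along the adjunction isomorphism $\vartheta$ from (\ref{bigiso}) (which sends $f \in \End R M$ to $m \otimes \alpha \mapsto \alpha(f(m))$) produces $f = \sigma(g)$ characterized by
\[\alpha(f(m)) = g(\alpha(m)) \qquad \text{for every } m \in M,\ \alpha \in M^*.\]
This identity is precisely the commutativity of the given square with $\tilde{f} = g$.

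For the inclusion $(\subseteq)$, given $f = \sigma(g)$, the choice $\tilde{f} := g$ makes every square commute. For uniqueness, suppose $\tilde{f}_1, \tilde{f}_2 \in \End R {\T R M}$ both satisfy $\alpha \circ f = \tilde{f}_i \circ \alpha$ for every $\alpha$. Then they agree on every element of the form $\alpha(m)$; since by definition $\T R M = \sum_{\alpha \in M^*} \alpha(M)$ is generated as an $R$-module by precisely such elements, $\tilde{f}_1 = \tilde{f}_2$.

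For $(\supseteq)$, assume $f$ admits a (unique) $\tilde{f} \in \End R {\T R M}$. Set $f' := \sigma(\tilde{f})$; by the formula above, $\alpha(f'(m)) = \tilde{f}(\alpha(m)) = \alpha(f(m))$ for all $\alpha \in M^*$ and $m \in M$. Hence $\alpha(f(m) - f'(m)) = 0$ for every $\alpha \in M^*$, and since $M$ is reflexive, hence torsionless via $\varepsilon_M$, this forces $f(m) = f'(m)$ for all $m$, so $f = \sigma(\tilde{f})$ lies in $\IM \sigma$.

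The only (modest) obstacle is the bookkeeping needed to unravel the composite defining $\sigma$ in (\ref{inj1}); once the characterization $\alpha \circ f = \tilde{f} \circ \alpha$ is in hand, both inclusions reduce to the two basic facts that $\T R M$ is generated by the elements $\alpha(m)$ (for the uniqueness of $\tilde{f}$) and that $M$ is torsionless (for recovering $f$ from $\tilde{f}$).
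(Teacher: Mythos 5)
Your proof is correct and follows essentially the same route as the paper: unwind $\sigma$ through the adjunction of (\ref{bigiso}) to the identity $\alpha(f(m)) = \tilde{f}(\alpha(m))$ for all $m,\alpha$, and then verify both inclusions by the same computations. The one small difference is in the uniqueness step: you derive the $\exists!$ clause directly from the fact that the elements $\alpha(m)$ generate $\T R M$, so two candidate lifts $\tilde{f}$ must agree, whereas the paper's uniqueness argument uses torsionlessness of $M$ to show that $f$ is determined by $\tilde{f}$; both observations are valid, and your version addresses the uniqueness asserted in the statement more squarely.
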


\begin{proof} Let $B$ denote the set defined in the statement. 

 If  $f \in {B}$,  there exists a unique $\tilde{f} \in \End R {\T R M}$ such that  for any given $m \otimes \alpha \in \MO$ as in (\ref{phi}) and (\ref{theta}):  
\begin{align*}
\vartheta_M^* ( \tilde{f} )(m \otimes \alpha)& =  \tilde{f} \circ \vartheta_M(m \otimes \alpha) \\
&= \tilde{f} \alpha(m)  \\
&= \alpha (f(m))\\
&= \vartheta_f (m \otimes \alpha).
\end{align*}

It follows that  $\sigma(\tilde{f}) =\vartheta^{-1}(\vartheta_f)= f$ and so $f \in \IM \sigma$. 

On the other hand, say $\tilde{f} \in \End R {\T R M}$.  Then \[\vartheta_M^* (\tilde{f})\in \hom R \MO R \overset{\vartheta^{-1}}{\underset{\cong}{\lra}}  \hom R M M.\] In particular, there exists an  $ f \in \End R M$ such that, in the notation established above, $\tilde{f}\circ \vartheta_M  = \vartheta_M^* (\tilde{f})= \vartheta_f$. One gets
\[ \tilde{f}(\alpha(m)) = \tilde{f}\circ \vartheta_M(m \otimes \alpha) = \vartheta_f(m\otimes \alpha) = \alpha(f(m))\]
for all $m \otimes \alpha \in \MO$. That is, the square in the definition of $B$ commutes. 

Say $\alpha (g(m)) = \tilde{f}(\alpha(m)) = \alpha (f(m))$ for some other $g\in \hom R M M$ and all $m\otimes_R \alpha \in \MO$. Then $\alpha (f(m) -g(m)) = 0$ for all $\alpha \in M^*$ and  $m \in M$. Since $M$ is torsionless $f(m) = g(m)$ for all $m \in M$. Thus $\tilde{f}$ is unique to $f$. It follows that $\IM \sigma \subseteq {B}$
\end{proof}

\begin{proposition}\label{algebra map}
Let $M$ be a reflexive $R$-module. The map $\sigma$ in (\ref{inj1}) is a monomorphism of $R$-algebras with $\IM \sigma \subseteq \Z {\End R M}$. 

\end{proposition}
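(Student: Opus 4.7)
The plan is to derive all three assertions—$R$-linearity, multiplicativity, and centrality of the image—from the single characterization supplied by Lemma~\ref{B}: that $\sigma(\tilde f) = f$ is equivalent to
\[ \alpha(f(m)) = \tilde f(\alpha(m)) \qquad \text{for every } m \in M,\ \alpha \in M^{*}, \]
together with the fact that $M$ is torsionless (an immediate consequence of reflexivity).

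First, I would dispose of injectivity and $R$-linearity by recalling that $\sigma$ is the composition of the two maps in~(\ref{inj1}): the pullback $\vartheta_M^{*}$, which is injective because $\vartheta_M$ is surjective by~(\ref{surj}), and the $R$-algebra isomorphism $\vartheta^{-1}$ of~(\ref{bigiso}). Both are $R$-linear and both are injective, so $\sigma$ is an injective $R$-module homomorphism.

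Next I would check that $\sigma$ respects composition and the identity. For $\tilde f, \tilde g \in \End R {\T R M}$ with images $f = \sigma(\tilde f)$ and $g = \sigma(\tilde g)$, applying the displayed identity twice yields
\[ \alpha\bigl((f\circ g)(m)\bigr) \;=\; \tilde f\bigl(\alpha(g(m))\bigr) \;=\; (\tilde f \circ \tilde g)(\alpha(m)); \]
so $f \circ g$ witnesses the characterization for $\tilde f \circ \tilde g$, and the uniqueness clause in Lemma~\ref{B} forces $\sigma(\tilde f \circ \tilde g) = f \circ g$. Preservation of the identity follows at once from torsionlessness of $M$.

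The only substantive step is the containment $\IM \sigma \subseteq \Z{\End R M}$, and here the maneuver I would use is that for any $g \in \End R M$ and $\alpha \in M^{*}$ the composite $\alpha \circ g$ is again an element of $M^{*}$. Applying the displayed characterization of $\sigma(\tilde f) = f$ both to the pair $(\alpha, g(m))$ and to the pair $(\alpha \circ g, m)$ produces the same value $\tilde f(\alpha(g(m)))$, whence
\[ \alpha(f(g(m))) = \alpha(g(f(m))) \]
for every $\alpha \in M^{*}$ and $m \in M$. Torsionlessness then forces $f \circ g = g \circ f$, so $f \in \Z{\End R M}$. I expect this last step, in particular the substitution $\alpha \rightsquigarrow \alpha \circ g$, to carry the entire content of the proposition; everything else is bookkeeping on the commuting squares of Lemma~\ref{B}.
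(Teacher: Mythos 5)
Your proof is correct and follows essentially the same route as the paper: the centrality step via replacing $\alpha$ by $\alpha\circ g$ and concluding by torsionlessness is exactly the paper's argument, and the multiplicativity check is the same computation phrased through the commuting-square characterization of Lemma~\ref{B}. The only nit is that the ``uniqueness'' you invoke to get $\sigma(\tilde f\tilde g)=f\circ g$ is really the uniqueness of the endomorphism of $M$ attached to a given $\tilde f$ (i.e.\ torsionlessness, as in the last paragraph of the proof of Lemma~\ref{B}), not the displayed $\exists!\,\tilde f$ clause, but this is precisely the fact the paper uses as well.
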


\begin{proof}
Take $f \in \IM \sigma$, $g \in \End R M$, $m \in M$ and $\alpha \in M^*$. Since $ \alpha g \in M^*$ and $g(m) \in M$, by Lemma \ref{B}  there exists $\tilde{f} \in \End R {\T R M}$ such that
\[(\alpha g)(f(m) )= \tilde{f} (\alpha g(m)) = (\alpha f) (g (m)). \]

This shows $\alpha(gf(m)-fg(m) )= 0$ for all $\alpha \in M^*$ and $m \in M$. As $M$ is torsionless, it follows that $gf(m) = fg(m)$ for all $m \in M$.

Further, $\sigma$ is an $R$-algebra homomorphism; for $\tilde{f}, \tilde{g} \in \End R {\T R M}$:
\[\tilde{f}\tilde{g} \circ \vartheta_M( m \otimes \alpha)  = \tilde{f}\tilde{g} (\alpha (m)) = \tilde{f} \alpha (g(m)) = \alpha(fg(m)) = \vartheta_{fg}(m \otimes \alpha)\]

That is, $\sigma (\tilde{f}\tilde{g}) = fg = \sigma(\tilde{f}) \sigma(\tilde{g})$. 
\end{proof}

\begin{corollary}\label{comm} Let $M$ be a reflexive $R$-module. Then $\End R {\T R M}$ is  a commutative ring.  \qed

\end{corollary}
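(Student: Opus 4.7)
The plan is to invoke Proposition \ref{algebra map} directly. That proposition asserts that the canonical map $\sigma : \End R {\T R M} \to \End R M$ is a monomorphism of $R$-algebras whose image lies inside $\Z{\End R M}$. Since the center of any associative ring is by definition commutative, the image $\IM \sigma$, being a subring of a commutative ring, is itself commutative.

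To finish, I would simply transport commutativity back along $\sigma$. Concretely, for any $\tilde f, \tilde g \in \End R {\T R M}$, the elements $\sigma(\tilde f)$ and $\sigma(\tilde g)$ lie in $\Z{\End R M}$ and therefore commute in $\End R M$. Using that $\sigma$ is an $R$-algebra homomorphism, one has
\[
\sigma(\tilde f \tilde g) \;=\; \sigma(\tilde f)\sigma(\tilde g) \;=\; \sigma(\tilde g)\sigma(\tilde f) \;=\; \sigma(\tilde g \tilde f),
\]
and then injectivity of $\sigma$ yields $\tilde f \tilde g = \tilde g \tilde f$. Hence $\End R {\T R M}$ is commutative.

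There is essentially no obstacle here: the entire content has already been packaged into Proposition \ref{algebra map}, and the corollary is a one-line consequence of the general principle that a subring of a commutative ring is commutative, combined with injectivity of the comparison map.
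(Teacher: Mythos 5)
Your proposal is correct and is exactly the argument the paper intends: the corollary follows immediately from Proposition \ref{algebra map}, since $\sigma$ embeds $\End R {\T R M}$ as a subring of the commutative ring $\Z{\End R M}$, which is why the paper records it with no further proof. Nothing is missing.
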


\begin{remark}
Consider an ideal $I$ with $\grade I \geq 2$.  As a consequence of Example \ref{grade}, the ring of endomorphisms, $\End R I $, is $R$.  However, there do exist ideals for which $\End R I$ is not commutative.

For example, given a field $k$, the ideal $(x,y)$ in the ring $k[x,y]/(xy,y^2)$ has a noncommutative endomorphism ring. To see this, consider  the $f,g \in \End R I$ with

\begin{center}
\begin{tabular}{ccc}
$f(x) = x$ &\&  & $f(y) = 0$\\
$g(x) = y$ & \& & $g(y) =0$. 
\end{tabular}
\end{center}
Here $fg \not= gf$.
\end{remark}

Given that ideals are typically not reflexive, the preceding corollary and  remark suggest the following:
\begin{Question}
What are the necessary and sufficient conditions for an ideal to be the trace ideal of a reflexive module?
\end{Question}

Now we construct $\sigma^{-1}$.

The  definition below and the result that follows are from Suzuki \cite{Suzuki1974}. For ease of reference, we include a proof of the theorem. 
\begin{definition} \label{Itorsionless}
 Let $M$ and $U$ be $R$-modules. We say $M$ is $U$-torsionless if for every nonzero $m \in M$, there exists $\alpha \in \hom R M U$ with $\alpha(m) \not=0$. Equivalently the natural map 
 \[ M \lra \hom R {\hom R M U} U \] is injective.
\end{definition}

\begin{remark}\label{sigma'}

If $M$ generates $U$, then every element $u \in U$ may be written \[ u = \sum \phi_i(m_i)\]
for some $m_i \in M$ and  $\phi_i \in \hom R M U$.

Given $q \in Q : = \Z{\End  R M}, $  we hope to define an element $\bar{q} \in \bar{Q}: = \Z {\End R U}$ as follows:
\[\bar{q} (u) = \bar{q} \left( \sum \phi_i(m_i)\right) = \sum \phi_i(q(m_i)),\]

and to define a map 
\[
\xymatrix@R=.5em{
\sigma' :\Z{\End R M} \ar@{->}[r] & \Z{\End R U} 
}
\]
by $q \mapsto \bar{q}$.

The Theorem below, which combines Theorem 1 and Lemmas 2 and 3 from \cite{Suzuki1974}, establishes one case in which this map is well-defined. 

\end{remark} 

\begin{theorem}\label{Suzuki1}
Let $M$ and $U$ be $R$-modules. If $M$ generates $U$ and $U$ is $M$-torsionless then the map defined in Remark \ref{sigma'},  \[\sigma':\Z{\End  R M} \lra \Z{\End R U},\] is homomorphism of rings and is a monomorphism whenever $M$ is also $U$-torsionless.
\end{theorem}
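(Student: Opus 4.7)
The plan is to verify four things about the formula $\bar{q}(u) = \sum \phi_i(q(m_i))$ proposed in Remark \ref{sigma'}: that it produces a well-defined element of $U$ (independent of the chosen expression $u = \sum \phi_i(m_i)$); that the resulting map $\bar{q}$ lies in $\Z{\End R U}$; that $\sigma'$ is a ring homomorphism; and that $\sigma'$ is injective under the additional hypothesis that $M$ is $U$-torsionless.

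The main obstacle is well-definedness, and this is where both hypotheses on $M$ and $U$ come into play. Suppose $\sum_i \phi_i(m_i) = \sum_j \psi_j(n_j)$ in $U$. To conclude that $\sum_i \phi_i(q(m_i)) = \sum_j \psi_j(q(n_j))$, I will use that $U$ is $M$-torsionless: by Definition \ref{Itorsionless}, it suffices to verify that $\beta$ sends the two sums to the same element of $M$ for every $\beta \in \hom R U M$. For any such $\beta$ the composites $\beta\phi_i$ and $\beta\psi_j$ belong to $\End R M$; applying $\beta$ to the original equation yields $\sum(\beta\phi_i)(m_i) = \sum(\beta\psi_j)(n_j)$ inside $M$. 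Since $q$ is central in $\End R M$ it commutes with every $\beta\phi_i$ and every $\beta\psi_j$, so applying $q$ and commuting past those endomorphisms gives $\sum(\beta\phi_i)(q(m_i)) = \sum(\beta\psi_j)(q(n_j))$. This is precisely the image under $\beta$ of the desired equality, and $M$-torsionlessness of $U$ finishes the argument.

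The remaining three items follow cleanly from this definition. For centrality of $\bar{q}$ in $\End R U$, pick $\psi \in \End R U$ and write $\psi(u) = \sum (\psi\phi_i)(m_i)$ using $\psi\phi_i \in \hom R M U$; the formula for $\bar{q}$ then yields $\bar{q}\psi(u) = \sum(\psi\phi_i)(q(m_i)) = \psi\bigl(\sum \phi_i(q(m_i))\bigr) = \psi\bar{q}(u)$. For the ring homomorphism property, $\sigma'(1_M) = 1_U$ is clear, additivity is immediate from the formula, and multiplicativity follows by evaluating $\bar{q_1}\bar{q_2}$ on $u = \sum \phi_i(m_i)$: since $\bar{q_2}(u) = \sum \phi_i(q_2(m_i))$ is already displayed as a sum of the form $\sum \phi_i(n_i)$ with $n_i = q_2(m_i)$, one more application of the defining formula yields $\bar{q_1}(\bar{q_2}(u)) = \sum \phi_i(q_1 q_2(m_i)) = \overline{q_1 q_2}(u)$.

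Finally, for injectivity when $M$ is $U$-torsionless, suppose $\bar{q} = 0$. Taking $u = \phi(m)$ for an arbitrary $\phi \in \hom R M U$ and $m \in M$ gives $\phi(q(m)) = \bar{q}(\phi(m)) = 0$. Since this holds for every $\phi \in \hom R M U$ and $M$ is $U$-torsionless, we conclude $q(m) = 0$ for every $m \in M$, so $q = 0$ and $\sigma'$ is a monomorphism.
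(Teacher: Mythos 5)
Your proof is correct and follows essentially the same route as the paper: well-definedness via composing with maps in $\hom R U M$ to land in $\End R M$, using centrality of $q$ and $M$-torsionlessness of $U$, then the same verifications of centrality, multiplicativity, and injectivity (yours phrased contrapositively). No gaps to report.
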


\begin{proof}
We must establish that $\sigma'$, defined in Remark \ref{sigma'},  is a well-defined  ring homomorphism and further that \[\IM {\sigma'} \subseteq \Z{\End R U}. \]

\emph{ $\sigma'$  is well-defined: }

 Given two representations of $u \in U$ \[  \sum \phi'_i(m'_i) = u = \sum \phi_i(m_i),\] to show $\sigma'$ is well-defined is to show $\bar{q} \left( \sum \phi'_i(m'_i) - \sum \phi_i(m_i)\right)= 0$. It is enough to show that for arbitrary $\phi_i \in \hom R M U$, $m_i \in M$ and $q\in \Z {\End R M}$, \[\sum \phi_i(m_i) = 0 \mbox{ implies}  \sum \phi_i(q(m_i))=0.\]

Assume $\sum \phi_i(m_i) = 0$. Take any $d\in \hom R U M$ and note that $d\phi_i \in \End R M$ and so commutes with $q$. It follows that:
\begin{align*}
0 =qd\left( \sum \phi_i(m_i) \right) & =  \sum qd\left(\phi_i(m_i) \right) \\
&=\sum d\phi_i(q(m_i)) \\
  &= d\left( \sum \phi_i(q(m_i))\right).
\end{align*}

Recall, $U$ is $M$-torsionless by assumption. Since $d\left( \sum \phi_i(q(m_i))\right)=0$ for any \\$d\in \hom R U M$, it must be that $\sum \phi_i(q(m_i))= 0$.

It is clear that $\sigma'$ is an $R$-module homomorphism. 

 \emph{Further, $\sigma'$ is a ring homomorphism:}
\begin{align*}
\overline{q_1q_2} \left(\sum \phi_i(m_i) \right) & =\sum \phi_i(q_1q_2(m_i))\\
&=\bar{q_1} \left( \sum \phi_i(q_2(m_i))\right)\\
&=\bar{q_1} \bar{q_2} \left( \sum (\phi_i(m_i))\right).
\end{align*}

That is, given $q_i \in Q$, \[\sigma' (q_1 q_2) = \sigma'(q_1)\sigma'(q_2).\]

\emph{When $M$ is $U$-torsionless, $\sigma'$ is a monomorphism:}

Given any nonzero $q\in Q$ there exists $m\in M$ with $q(m) \not=0$. Since $M$ is $U$-torsionless, there also exists $\phi\in \hom R M U $ with $\phi(q(m)) \not=0$. That is, $\bar{q}(\phi(m)) \not=0$ implying $\bar{q} \not=0$ and therefore $\sigma'$ is a monomorphism.

\emph{The image of ${\sigma'}$ is a subset of $\Z{\End R U}$:}

Finally, notice  $\bar{q} \in \bar{Q}$  since for any  $f \in \End R U$ and $u \in U$ :
\begin{align*} \pushQED{\qed}
\bar{q}(f(u)) &=  \bar{q}\left(f\left(\sum \phi_i(m_i)\right)\right) \\
&=\bar{q} \sum f\phi_i(m_i)\\
&:=\sum f\phi_i(q(m_i))\\
&=f \left(\sum \phi_i(q(m_i)) \right)\\
&:= f \bar{q}\left(\sum \phi_i(m_i)\right) =  f\bar{q}(u)  \qedhere
\end{align*}
\end{proof}

\begin{corollary} \label{Suzukicor}
If  an $R$-module $M$ is torsionless and faithful,  there is a monomorphism of $R$-algebras \[\sigma': \Z{\End R M} \lra \End R {\T R M}.\]
\end{corollary}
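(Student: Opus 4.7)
The plan is to apply Theorem \ref{Suzuki1} with $U = \T R M$ and then compose with the inclusion $\Z{\End R {\T R M}} \hookrightarrow \End R {\T R M}$.

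To apply Theorem \ref{Suzuki1}, I need to verify three things: (a) $M$ generates $\T R M$; (b) $\T R M$ is $M$-torsionless; (c) $M$ is $\T R M$-torsionless. Item (a) is Remark \ref{Mgenstrace}. For (c), take $0 \neq m \in M$; since $M$ is torsionless there exists $\alpha \in M^*$ with $\alpha(m) \neq 0$, and by definition $\IM \alpha \subseteq \T R M$, so $\alpha$ may be viewed as an element of $\hom R M {\T R M}$ not vanishing on $m$.

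The key step is (b). For any $m \in M$, multiplication by $m$ gives an $R$-linear map $\mu_m : \T R M \to M$, $r \mapsto rm$, which makes sense because $\T R M \subseteq R$. Given any $0 \neq t \in \T R M$, the hypothesis that $M$ is faithful (together with Proposition \ref{trace properties}\ref{trace grade}'s observation that $\Ann M = 0$, but here used directly from the definition of faithfulness) implies $tM \neq 0$, so there exists $m \in M$ with $\mu_m(t) = tm \neq 0$. Hence $\T R M$ is $M$-torsionless.

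With these three conditions in hand, Theorem \ref{Suzuki1} yields a ring monomorphism $\sigma' : \Z{\End R M} \to \Z{\End R {\T R M}}$; composing with the inclusion of the center gives the desired monomorphism into $\End R {\T R M}$. That $\sigma'$ is $R$-linear (hence an $R$-algebra map) is immediate from its definition via the formula $\bar{q}(\sum \phi_i(m_i)) = \sum \phi_i(q(m_i))$ in Remark \ref{sigma'}, since the $R$-action factors through the $\phi_i$. The only subtlety worth flagging is the faithfulness hypothesis, which is what powers (b); without it, nonzero elements of $\T R M$ could annihilate all of $M$ and no nonzero maps $\T R M \to M$ need exist.
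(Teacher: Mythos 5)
Your proposal is correct and follows essentially the same route as the paper: apply Theorem \ref{Suzuki1} with $U = \T R M$, checking that $M$ generates $\T R M$, that torsionless gives $\T R M$-torsionless via $\IM \alpha \subseteq \T R M$, and that faithfulness gives $M$-torsionlessness of $\T R M$ via the multiplication maps $t \mapsto tm$. Your explicit mention of composing with the inclusion $\Z{\End R {\T R M}} \subseteq \End R {\T R M}$ and your phrasing of the maps $\mu_m : \T R M \to M$ are slightly more careful than the paper's wording, but the argument is the same.
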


\begin{proof}
Recall that $M$ generates $\T R M$. 

For each map $\alpha \in M^*$, we have $\IM {\alpha} \subseteq \T R M$. Therefore $M$ torsionless implies that $M$ is $\T R M$-torsionless.  Also, $M$ faithful implies \[\T R M \cap \Ann M = 0 .\] So for all $t\in \T R M$ there exists $m\in M$ with $tm \not=0$. The maps 
\begin{center}
\begin{tabular}{cccc}
$R$ & $\lra $ & $R$\\
$1$ & $\mapsto$ & $m$\\
\end{tabular}
\end{center}
restricted to $\T R M$ demonstrate that $\T R M$ is $M$-torsionless.

The desired monomorphism exists by Theorem \ref{Suzuki1}. 
\end{proof}

\begin{theorem} \label{main}
Let $R$ be a Noetherian ring. If $M$ is a finitely generated, faithful and reflexive $R$-module then the map \[\sigma: {\End R {\T R M}} \lra \Z {\End R M},\] in (\ref{inj1}), is an isomorphism of $R$-algebras.
\end{theorem}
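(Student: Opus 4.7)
The plan is to combine the machinery already built up in the section: Proposition \ref{algebra map} gives that $\sigma$ is an injective $R$-algebra homomorphism landing in $\Z{\End R M}$ (using that $M$ is reflexive, hence torsionless), so the entire content of the theorem is surjectivity. I would extract surjectivity from Corollary \ref{Suzukicor}, which applies because $M$ is torsionless (by reflexivity) and faithful; this yields an $R$-algebra monomorphism
\[ \sigma' : \Z{\End R M} \lra \End R {\T R M} \]
going the other way. The goal then reduces to showing $\sigma \circ \sigma' = \mathrm{id}_{\Z{\End R M}}$, since combining this identity with the injectivity of $\sigma$ forces $\sigma$ to be bijective.

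To verify the composition identity, I would unwind the definitions. Given $q \in \Z{\End R M}$, by the construction in Remark \ref{sigma'} the endomorphism $\tilde{q} := \sigma'(q) \in \End R {\T R M}$ is characterized by
\[ \tilde{q}(\alpha(m)) = \alpha(q(m)) \quad \text{for all } \alpha \in M^*,\, m \in M. \]
Applying $\sigma$ to $\tilde{q}$ and using the characterization of $\IM \sigma$ from Lemma \ref{B}, the element $\sigma(\tilde{q}) = f \in \End R M$ is the unique endomorphism making the square with $\tilde{q}$ on the bottom and $\alpha$ on the sides commute, i.e., satisfying $\alpha(f(m)) = \tilde{q}(\alpha(m))$ for every $\alpha, m$. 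Combining the two displays gives $\alpha(f(m)) = \alpha(q(m))$ for all $\alpha \in M^*$ and $m \in M$; because $M$ is torsionless, this forces $f = q$, so $\sigma(\sigma'(q)) = q$.

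I expect the only subtle point to be the well-definedness of $\sigma'$, but that is handled entirely by Theorem \ref{Suzuki1} (which is why Suzuki's result was imported verbatim): one needs $M$ to generate $\T R M$ and $\T R M$ to be $M$-torsionless, both verified in the proof of Corollary \ref{Suzukicor}. Once $\sigma'$ is in hand, the identity $\sigma \circ \sigma' = \mathrm{id}$ is essentially a bookkeeping check unwinding the definitions through (\ref{theta}), (\ref{phi}), and Lemma \ref{B}, with torsionlessness of $M$ doing the final uniqueness step. No further hypothesis on $R$ beyond Noetherianness (inherited from the ambient setup) is required for the argument itself; the Noetherian and faithful assumptions are used upstream to guarantee that $\T R M$ has positive grade (Proposition \ref{trace properties}\ref{trace grade}), which, although not invoked directly in this proof, justifies the regime in which the theorem is of interest.
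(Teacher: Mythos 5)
Your proposal is correct and follows the paper's own proof essentially verbatim: Proposition \ref{algebra map} for injectivity and the image landing in $\Z{\End R M}$, Corollary \ref{Suzukicor} for the reverse monomorphism $\sigma'$, and the check that $\sigma\sigma' = \mathrm{id}$ by unwinding the definitions (the paper does this via the computation $f'\circ\vartheta_M = \vartheta_f$, which is the same bookkeeping you route through Lemma \ref{B} and torsionlessness). Your side remarks are also accurate, including that faithfulness enters only through Corollary \ref{Suzukicor} and Noetherianness is not directly used in this step.
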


\begin{proof}
By Proposition \ref{algebra map}, $\sigma : \End R {\T R M} \inj \Z {\End R M}$ is a monomorphism of $R$-algebras; in particular $\End R {\T R M}$ is commutative. 
Now by Corollary \ref{Suzukicor} there exists a monomorphism \[\sigma': \Z {\End R M} \inj \Z {\End R {\T R M}} = \End R {\T R M}\] via $\{f \mapsto f'\}$ where $f'$ is defined as follows: for any $x \in \T R M$, there exists $\alpha_i \in M^*$ and $m_i \in M$ with $x = \displaystyle \sum_{i=1}^n \alpha_i(m_i)$; set 
\[ f' (x) = f'\left(\displaystyle \sum_{i=1}^n\alpha_i(m_i)\right) :=\displaystyle \sum_{i=1}^n\alpha_i(f(m_i)).\]

Notice
\begin{align*}
 f' \circ \vartheta_M \left(\displaystyle \sum_{i=1}^n m_i \otimes \alpha_i \right)& =f' \left(\displaystyle \sum_{i=1}^n\alpha_i(m_i)\right)\\
&=\displaystyle \sum_{i=1}^n\alpha_i(f(m_i)) \\
&= \vartheta_f \left(\displaystyle \sum_{i=1}^n m_i \otimes \alpha_i \right)
\end{align*}

So $\sigma(f') = \vartheta^{-1}(\vartheta_f) =  f$ and for a given $f \in \Z{\End R M}$, \[\sigma \sigma' (f) = \sigma(f') = f .\] Since both $\sigma$ and $\sigma'$ are ring monomorphisms, they are inverse isomorphisms.
\end{proof}

\begin{remark}
The assumption that $M$ is faithful in Theorem \ref{main} ensures that $\T R M \cap \Ann M = 0$, which allows one to construct $\sigma'$; see Corollary \ref{Suzukicor}. When $M$ is reflexive, by Proposition \ref{trace properties} \ref{trace grade} one has $\Ann M = \Ann {\T R M}$.  Thus, \[ \T R M \cap \Ann M = \T R M \cap \Ann {\T R M}\]  and so this ideal must consist of nilpotent elements. It follows that $ \T R M \cap \Ann M = 0$ whenever $R$ is reduced. The proof of Theorem \ref{main}  therefore also proves following Corollary.
\end{remark}

\begin{corollary}
Let $R$ be a reduced Noetherian ring. If $M$ is a finitely generated, reflexive $R$-module then the map \[\sigma: {\End R {\T R M}} \lra \Z {\End R M},\] in (\ref{inj1}), is an isomorphism of $R$-algebras. \qed
\end{corollary}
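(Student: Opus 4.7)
The plan is to essentially run the proof of Theorem \ref{main} verbatim, observing that the only place faithfulness of $M$ was used is to guarantee $\T R M \cap \Ann M = 0$, and that the \emph{reduced} hypothesis on $R$ can substitute for faithfulness at precisely that step. The remark immediately preceding the corollary already sketches this idea; I would formalize it as follows.

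First, note that Proposition \ref{algebra map} needs only reflexivity, so $\sigma : \End R {\T R M} \inj \Z{\End R M}$ is an $R$-algebra monomorphism with commutative source (Corollary \ref{comm}) regardless of the faithful hypothesis. Thus it suffices to construct a left inverse $\sigma'$ to $\sigma$.

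Next, I would verify that we can still apply Theorem \ref{Suzuki1} with $U = \T R M$ to obtain the monomorphism $\sigma' : \Z{\End R M} \to \End R {\T R M}$. Two conditions need to be checked: that $M$ is $\T R M$-torsionless, and that $\T R M$ is $M$-torsionless. The first is automatic from torsionlessness of $M$ (reflexive implies torsionless) since every $\alpha \in M^*$ factors through $\T R M$, exactly as in Corollary \ref{Suzukicor}. The second is where faithfulness was previously invoked. By Proposition \ref{trace properties}\ref{trace grade}, reflexivity gives $\Ann {\T R M} = \Ann M$, so any $x \in \T R M \cap \Ann M = \T R M \cap \Ann{\T R M}$ satisfies $x \cdot x = 0$; since $R$ is reduced, $x = 0$. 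Therefore $\T R M \cap \Ann M = 0$, and the maps $r \mapsto rm$ for varying $m \in M$ show (just as in Corollary \ref{Suzukicor}) that $\T R M$ is $M$-torsionless. Hence $\sigma'$ exists as a ring monomorphism.

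Finally, the computation $\sigma \sigma'(f) = f$ from the end of the proof of Theorem \ref{main} uses nothing beyond the generation of $\T R M$ by $M$ and the defining formula for $\sigma'$, so it carries over unchanged. Since $\sigma$ and $\sigma'$ are both ring monomorphisms and one is a section of the other, they are mutually inverse $R$-algebra isomorphisms. There is no real obstacle here beyond recognizing the nilpotence argument; the work is really a bookkeeping exercise isolating the single use of faithfulness in Theorem \ref{main}.
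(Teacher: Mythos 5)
Your proposal is correct and is essentially the paper's own argument: the remark preceding the corollary makes exactly this observation, that faithfulness in Theorem \ref{main} enters only to give $\T R M \cap \Ann M = 0$, and that reflexivity plus $\Ann M = \Ann{\T R M}$ forces this intersection to consist of nilpotents, hence to vanish when $R$ is reduced, after which the proof of Theorem \ref{main} applies verbatim. No differences of substance to report.
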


The assumptions on $M$ in Theorem \ref{main} may also be relaxed given additional assumptions on $\T R M$;
the following lemma, for example, is an extension of Exercise 95 in \cite{Kaplansky1954} and is proved, in a noncommutative setting, in \cite[Theorem A.2. (g)]{MaxOrders}.
\begin{lemma}\label{Kaplansky} If $\T R M = R$, then $\Z{\End R M} \cong R$.
\end{lemma}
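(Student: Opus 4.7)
The plan is to exploit the ``dual basis''-style identity coming from $\T R M = R$: there exist finitely many $\alpha_1, \ldots, \alpha_n \in M^*$ and $m_1, \ldots, m_n \in M$ with $\sum_{i=1}^n \alpha_i(m_i) = 1$. For any $f \in \Z{\End R M}$ I will produce a scalar $r_f \in R$ such that $f$ acts on $M$ as multiplication by $r_f$, and then verify that $f \mapsto r_f$ is an $R$-algebra isomorphism $\Z{\End R M} \to R$ whose inverse sends $r$ to the homothety $r \cdot \mathrm{id}_M$.

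First I would note that $M$ must be faithful: if $r \in \Ann M$, then $r = r \cdot \sum_i \alpha_i(m_i) = \sum_i \alpha_i(r m_i) = 0$. The key step exploits centrality of $f$ against the family of rank-one endomorphisms $\phi_{\alpha, m} : M \to M$ given by $x \mapsto \alpha(x)\, m$, indexed over $\alpha \in M^*$ and $m \in M$. Since $f \phi_{\alpha, m} = \phi_{\alpha, m} f$, evaluating at $x \in M$ yields the identity
\[\alpha(x)\, f(m) \;=\; \alpha(f(x))\, m \qquad \text{for all } x, m \in M \text{ and } \alpha \in M^*.\]
Specializing $\alpha = \alpha_i$ and $x = m_i$ and summing over $i$, the dual basis identity collapses the left-hand side to $f(m)$, while the right-hand side becomes $r_f \cdot m$ with $r_f := \sum_i \alpha_i(f(m_i))$. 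Hence $f(m) = r_f \cdot m$ for every $m \in M$, so $f$ is multiplication by $r_f$.

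The assignment $f \mapsto r_f$ is then easily checked to be $R$-linear and multiplicative; surjectivity is immediate since the homothety by any $r \in R$ is central with associated scalar $r$, and injectivity follows from faithfulness of $M$. I do not expect a serious obstacle. The only subtlety is that this lemma cannot be deduced directly from Theorem \ref{main}, because $M$ is not assumed reflexive or even torsionless, so one cannot invoke the $\sigma$-construction of Section \ref{mainsection}. Instead the rank-one endomorphisms $\phi_{\alpha, m}$ together with the dual basis identity extract the scalar $r_f$ from the centrality condition directly, giving the commutative analogue of the noncommutative argument in \cite[Theorem A.2 (g)]{MaxOrders}.
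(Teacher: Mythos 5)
Your proof is correct, and it takes a genuinely different route from the paper's. The paper uses $\T R M = R$ to write $M^n \cong N \oplus R$, shows by hand (via the projection of $M^n$ onto the summand $R$ and the auxiliary endomorphisms $\gamma_m$) that every central endomorphism of $M^n$ is a homothety, and then transfers the conclusion back to $M$ by citing that $\End R M$ and $\End R {M^n}$ are Morita equivalent and hence have isomorphic centers. You instead work directly on $M$: the relation $\sum_i \alpha_i(m_i)=1$ supplied by $\T R M = R$, combined with centrality of $f$ against the rank-one endomorphisms $\phi_{\alpha,m}\colon x \mapsto \alpha(x)m$, gives $\alpha(x)f(m)=\alpha(f(x))m$ for all $x,m,\alpha$, and summing over the dual-basis family shows $f$ is multiplication by $r_f=\sum_i \alpha_i(f(m_i))$; faithfulness (which you correctly note is automatic) gives injectivity, and homotheties give surjectivity. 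This avoids both the passage to $M^n$ and the appeal to Morita theory, and it yields slightly more: your isomorphism is inverse to the canonical map $R \lra \Z{\End R M}$, so it shows directly that $M$ is balanced, which is the form in which the lemma is reused later (see Proposition \ref{balanceddim1}). What the paper's argument buys in exchange is that it sits squarely in the generator/progenerator formalism and carries over verbatim to the noncommutative setting of \cite{MaxOrders}, which is the source it is modeled on. All the steps in your argument check out.
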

\begin{proof}
Since $M$ generates $\T R M =R$, there exists an $n \in \N$ and an $R$-module $N$ such that  $M^n \cong N \oplus R$. It follows that $M^n$ is faithful and therefore the map $R \lra \Z{\End R {M^n}}$ sending $r\in R$ to multiplication by $r$ is an injection. Consider the endomorphism, $\pi$, which projects $M^n$ onto $R$, that is $\pi(n +r ) =  r$ for any $n  + r \in N\oplus R$.  If $f \in \Z{\End R {M^n}}$, then 
\begin{align*} f(r)&= f\circ\pi(n+r)\\
& = \pi \circ f(n+r) \in R.\end{align*}

It follows that $f$ restricted to $R \subseteq M^n$ is an element of $\End R R$ and so is given by multiplication by some $x \in R$. Now for any $m \in N$, take the endomorphism $\gamma_m: N \oplus R \lra N \oplus R$ such that $\gamma_m(n+r) = rm$. One gets 
\begin{align*} 
f(m) &= f \circ \gamma_m (1) \\
&= \gamma_m \circ f(1) \\
&= \gamma_m (x) \\
&= x \gamma_m (1) = xm
\end{align*}

Altogether,  every $f \in \Z {\End R {M^n}}$ is given by multiplication by some element $x$ in $R$. That is to say $\Z {\End R {M^n}} = R$. It follows that $\Z{\End R M} \cong R$ because $\End R M$ and $\End R {M^n}$ are morita equivalent rings; see \cite[Corollary A.2]{LeuschkeCrepant} and \cite[18.42]{Lam}.
\end{proof}

\begin{remark} 
In general, the converse of Lemma \ref{Kaplansky} need not hold; Example \ref{grade} shows that $\End R I = R$ for all ideals with $\grade I \geq 2$.  We use Theorem \ref{main} to prove the converse for reflexive modules over local Noetherian rings of depth less than or equal to one; see Proposition \ref{balanceddim1}.
\end{remark}

The following identifications are well-known; see Exercise 4.31 in \cite{WLCMR}.

\begin{lemma}\label{WiegandLeuschke}
Suppose $I\subset R$ is an ideal containing a nonzerodivisor $x$. Then the map \[ f\lra \dfrac{f(x)}{x}\] identifies $\hom R I R $ and $\hom R I I$ with  \[\left\{ \dfrac{a}{b} \in Q(R) | \dfrac{a}{b} \cdot I \subseteq R \right\} \mbox{ and } \left\{ \dfrac{a}{b} \in Q(R) | \dfrac{a}{b} \cdot I \subseteq I \right\},\] respectively, and this identification is independent of the choice of $x$. \qed

\end{lemma}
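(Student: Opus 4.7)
The plan is to define the map $\Phi\colon \hom R I R \to Q(R)$ by $f \mapsto f(x)/x$; this makes sense because the nonzerodivisor $x$ becomes a unit in $Q(R)$. Then I would verify in sequence that $\Phi$ lands in the prescribed subring of $Q(R)$, that it is independent of $x$, and that it is a bijection onto its prescribed target; finally I would restrict to $\hom R I I$.

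The main computation driving everything is the identity $x \cdot f(y) = f(xy) = y \cdot f(x)$ for $y \in I$, which holds by $R$-linearity since $xy \in I$. Dividing by $x$ in $Q(R)$ gives $(f(x)/x)\cdot y = f(y)$, so $(f(x)/x)\cdot I \subseteq R$, establishing that $\Phi(f)$ lies in the stabilizer $\{a/b \in Q(R) : (a/b)I \subseteq R\}$. The same identity, applied with a different nonzerodivisor $x' \in I$, yields $x' f(x) = f(xx') = x f(x')$, hence $f(x)/x = f(x')/x'$, giving independence of the chosen $x$.

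For injectivity of $\Phi$: if $f(x)/x = 0$ then $f(x) = 0$, so for any $y \in I$ we have $x f(y) = y f(x) = 0$, and since $x$ is a nonzerodivisor, $f(y) = 0$; thus $f = 0$. For surjectivity: given $a/b \in Q(R)$ with $(a/b)I \subseteq R$, the rule $f(y) := (a/b)\cdot y$ defines an element of $\hom R I R$ (well-defined since the product lies in $R$, and $R$-linear by construction), and by definition $\Phi(f) = (a/b)\cdot x / x = a/b$. It is then automatic that $\Phi$ is $R$-linear, so we obtain an isomorphism of $R$-modules onto the asserted subset of $Q(R)$.

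Finally, for the identification of $\hom R I I$: under $\Phi$, an endomorphism $f\colon I \to I$ maps to $f(x)/x$, and the identity $(f(x)/x)\cdot y = f(y)$ now gives $(f(x)/x)\cdot I \subseteq I$ since $f(y) \in I$. Conversely, any $a/b \in Q(R)$ satisfying $(a/b)I \subseteq I$ produces $y \mapsto (a/b)y$ in $\hom R I I$, and this construction is inverse to $\Phi$ restricted to $\hom R I I$. There is no real obstacle here beyond carefully tracking that the arithmetic in $Q(R)$ encodes exactly the $R$-linearity relation $xf(y) = yf(x)$; once that is observed, both the identification and its independence of $x$ follow immediately. \qed
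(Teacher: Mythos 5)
Your proof is correct and complete: the key identity $x f(y) = f(xy) = y f(x)$ does all the work for containment in the stated subset of $Q(R)$, independence of $x$, injectivity, and (via multiplication by $a/b$) surjectivity, with the $\hom R I I$ case following by the same computation. The paper itself gives no argument for this lemma, stating it as well known and citing Exercise 4.31 of \cite{WLCMR}; your write-up is precisely the standard argument that exercise intends, so there is nothing to add.
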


\begin{remark} \label{identifyend}
Recall $\T R M ^* = \End R {\T R M}$. As a consequence of Lemma \ref{WiegandLeuschke}, $\End R {\T R M}$ is commutative whenever $\T R M$ contains a nonzerodivisor. This is the case, for example, when the  $R$-module $M$ is finitely generated, reflexive and faithful; see Proposition \ref{trace properties} \ref{trace grade}.  

When $\T R M$ has positive grade we henceforth identify $\End R {\T R M}$ with
\[ \left\{ \dfrac{a}{b} \in Q(R) | \dfrac{a}{b} \T R M \subseteq \T R M \right\}.\]

\begin{definition}
Let $M$ be an $R$ module. We say $M$ is \emph{torsionfree} provided that given any  nonzerodivisor $r\in R$ and $m \in M$,  if  $rm =0$ then $m=0$.  
\end{definition}

\begin{corollary} \label{inQ}
Let $R$ be a Noetherian ring. If $M$ is a finitely generated, faithful, reflexive $R$-module,  then
\[\Z {\End R M} =  \End R {\T R M}\]
as subsets of $Q(R)$, where equality is understood in the following sense: there is a bijection between the sets given by $\sigma$ from Theorem \ref{main} and for any  $a/b \in \End R {\T R M}$ the endomorphism $\sigma (a/b)$ is multiplication by $a/b$. That is, $aM \subseteq bM$ and $\sigma(a/b)(m) = (am)/b\in M$ for each $m\in M$.

\end{corollary}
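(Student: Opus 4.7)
The plan is to combine Theorem \ref{main}, which supplies the bijection $\sigma$, with Lemma \ref{WiegandLeuschke}, which realises $\End R {\T R M}$ inside $Q(R)$, and then to unpack the construction of $\sigma$ from (\ref{inj1}) to identify each $\sigma(a/b)$ with multiplication by $a/b$ on $M$. Since $M$ is faithful and reflexive, Proposition \ref{trace properties} \ref{trace grade} guarantees that $\T R M$ contains a nonzerodivisor, so Lemma \ref{WiegandLeuschke} identifies $\End R {\T R M}$ with $\{a/b \in Q(R) : (a/b)\T R M \subseteq \T R M\}$, each $\tilde{f}$ acting on $\T R M$ as multiplication by its fraction. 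By Theorem \ref{main}, $\sigma$ is already known to be an $R$-algebra isomorphism onto $\Z{\End R M}$, so what remains is to exhibit $\sigma(a/b)$ explicitly as multiplication by $a/b$ on $M$.

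Setting $f := \sigma(a/b)$ and chasing through (\ref{inj1}) together with (\ref{bigiso}), $f$ is characterised by the identity $\alpha(f(m)) = \tilde{f}(\alpha(m))$ for every $m \in M$ and $\alpha \in M^*$. Substituting the fraction description of $\tilde{f}$ and clearing the denominator $b$ inside $R$, this becomes $\alpha(bf(m)) = a\alpha(m) = \alpha(am)$. Since $M$ is reflexive and hence torsionless, the element $bf(m) - am$ pairs to zero against every $\alpha \in M^*$ and therefore vanishes, giving $bf(m) = am$ in $M$.

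To interpret this relation as $f(m) = (am)/b$, I verify that $b$ acts as a nonzerodivisor on $M$: if $bm = 0$ then $b\alpha(m) = 0$ in $R$ for every $\alpha \in M^*$; since $b$ is a nonzerodivisor in $R$ we get $\alpha(m) = 0$ for all $\alpha$, and torsionlessness yields $m = 0$. Hence $am \in bM$ admits a unique preimage, which we denote $(am)/b$, and we conclude $aM \subseteq bM$ together with $\sigma(a/b)(m) = (am)/b$ as required. The principal obstacle is the step in the middle paragraph: propagating Lemma \ref{WiegandLeuschke}'s fraction description of $\tilde{f}$ through the defining equation $\alpha \circ f = \tilde{f} \circ \alpha$ of $\sigma$ to arrive at the clean identity $\alpha(bf(m)) = \alpha(am)$. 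Once that identity is in place, the torsionlessness of $M$ and the nonzerodivisor behaviour of $b$ on $M$ finish the argument routinely.
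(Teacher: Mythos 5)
Your proof is correct and follows the same skeleton as the paper's: both identify $\End R {\T R M}$ inside $Q(R)$ via Proposition \ref{trace properties} \ref{trace grade} and Lemma \ref{WiegandLeuschke} (as in Remark \ref{identifyend}), both invoke Theorem \ref{main} for the bijection $\sigma$, and both reduce the explicit description of $\sigma(a/b)$ to the identity $b\,\sigma(a/b)(m)=am$, after which one divides by the nonzerodivisor $b$. The difference lies in how that identity is obtained. The paper gets it in one line from formal properties of $\sigma$ already established in Theorem \ref{main}: since $\sigma$ is an $R$-algebra homomorphism with $\sigma(1)=\operatorname{id}_M$, one has $b\,\sigma(a/b)=\sigma\bigl(b\cdot(a/b)\bigr)=a\,\sigma(1)$, hence $b\,\sigma(a/b)(m)=am$; it then cites torsionfreeness of the reflexive module $M$ (Bruns--Herzog) to divide by $b$. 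You instead re-enter the construction of $\sigma$ through the commuting-square characterization of Lemma \ref{B}, namely $\alpha(f(m))=\tilde f(\alpha(m))$ for all $\alpha\in M^*$, substitute the description of $\tilde f$ as multiplication by $a/b$ on $\T R M$, clear the denominator, and use torsionlessness to conclude $bf(m)=am$; you also prove directly, rather than cite, that $b$ acts injectively on $M$ (again via torsionlessness). Your route is a bit longer but self-contained and makes explicit where the fraction description of $\tilde f$ enters; the paper's route is shorter because it leans only on the algebra-map properties of $\sigma$. Both arguments are valid and there is no gap.
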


\end{remark}

\begin{proof}
By  the hypotheses on  $M$ and Remark \ref{identifyend}, we identify $\End R {\T R M}$  with
\[ \left\{ \dfrac{a}{b} \in Q(R) | \dfrac{a}{b} \T R M \subseteq \T R M \right\}.\]
 Theorem \ref{main} provides a bijection between $\End R {\T R M}$ and $\Z{\End R M}$,  so that a given $f\in \Z{\End R M}$ is $\sigma({a/b})$ for some unique $a/b \in \End R {\T R M}$. 
 
 By definition $\sigma(1) = id_M$. Therefore,
\begin{align*}
 b \left(\sigma \left( \dfrac{a}{b}\right) ( m)\right) &=  \left( b \sigma \left( \dfrac{a}{b}\right)\right) ( m)\\
&= a\sigma(1)(m) \\
&= a( \mbox{id}_M (m) )\\
&= am.
\end{align*}
It follows that  $am = bn$ for some $n  \in M$. Since $b$ is a nonzerodivisor and $M$ is torsionfree \cite[Exercise 1.4.20 (a)]{BandH}, \[b\left( \sigma(a/b) (m)   - n\right) =0\]
implies $\sigma(a/b) (m)=n = (am)/b$. \end{proof}

When  $\T R M$ has positive grade but $M$ is not necessarily reflexive and faithful  (for example any module over a domain) we shall prove a result similar to Theorem \ref{main}, this time relating $\End R {\T R M}$ to $\Z {\End R {M^*}}$. 

Note, for each $\alpha \in M^*$, one has $ \IM \alpha \subseteq \T R M$. Thus there is a natural map of $R$-algebras $\End R {\T R M} \lra \End R {M^*}$ because $M^*$ is always a left $\End R {\T R M}$-module via the composition:
\[ M \overset{\alpha}{\lra} \T R M \overset{h}{\lra} \T R M  \subseteq R.\]

In particular the surjection (\ref{surj})  yields a second commutative diagram
\begin{equation}\label{inj2}
\xymatrix{
0 \ar@{->}[r] & \hom R {\T R M} R  \ar@{->}[d]^{=}_{\ref{trace properties} \ref{trace dual}}  \ar@{->}[r]^{\vartheta_M^* } &  \hom R \MO R  \ar@{->}[d]_{\cong}\\
& \End R {\T R M} \ar@{^{(}-->}[r]^{\rho} & \End R {M^*} 
}
 \end{equation}

where the isomorphism on the right follows from Hom-Tensor adjointness, so that an endomorphism $g$ in $\End R{M^*}$ is the image of $\psi_g$ in $\hom R {\MO} R$ such that \[\psi_g ( m \otimes \alpha ) = g(\alpha) m.\] The induced map $\rho$ sends $h \in \End R {\T R M}$ to composition by $h$.  To illustrate,  recall $ h\vartheta_M =\vartheta^*_M(h)  \in \hom R {\MO } R$ so that for each $m$ in $M$ and $\alpha$ in $M^*$ there exists some $\psi_g $ in $\hom R {\MO} R$ and $g$ in $\End R {M^*}$ such that 
\[ (h \alpha)(m) = h (\alpha (m)) = h \vartheta_M (m \otimes \alpha ) = \vartheta^*_M(h)(m \otimes \alpha)= \psi_g ( m \otimes \alpha ) = g(\alpha) (m).\] 

That is, $\rho(h) = g$ and $h\alpha = g(\alpha).$

Again we claim this injection, $\rho$,  is an $R$-algebra isomorphism from  $\End R {\T R M}$ onto the center of $\End R {M^*}$.

\begin{remark} \label{Mdualtorsionless}
It is easy to show that for a finitely generated $R$-module $M$, the $R$-dual, $M^*$, is both torsionfree and torsionless. If $\T R {M^*}$ contains a nonzerodivisor, $M^*$ is also faithful. 
\end{remark}

\begin{proposition} \label{image2} When $\T R M$ has positive grade, the map $\rho$ in (\ref{inj2}) is a monomorphism of R-algebras with \[\IM {\rho} \subseteq \Z{\End R {M^*}}.\]

\end{proposition}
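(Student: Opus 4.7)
The plan is to exploit the positive-grade hypothesis to realize $\End R{\T R M}$ concretely as a subring of $Q(R)$, and then verify the three required properties of $\rho$ by direct computation. Since $\T R M$ contains a nonzerodivisor, Lemma \ref{WiegandLeuschke} combined with Proposition \ref{trace properties} \ref{trace dual} identifies $\End R{\T R M}$ with the subring $S := \{a/b \in Q(R) : (a/b)\T R M \subseteq \T R M\}$ of $Q(R)$. Unpacking (\ref{inj2}), for $h \in \End R{\T R M}$ corresponding to $a/b$ the endomorphism $\rho(h) \in \End R{M^*}$ is given by postcomposition, i.e.\ $\rho(h)(\alpha) = h\circ\alpha$, so that $\rho(h)(\alpha)(m) = (a/b)\alpha(m)$ for all $\alpha\in M^*$ and $m\in M$; this lies in $R$ because $\alpha(m)\in\T R M$ and $a/b$ preserves $\T R M$.

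That $\rho$ is an $R$-algebra homomorphism is immediate from this description: $\rho(h_1 h_2)(\alpha) = h_1\circ h_2\circ\alpha = \rho(h_1)\rho(h_2)(\alpha)$, and $R$-linearity and $\rho(\mathrm{id}) = \mathrm{id}$ are clear. For injectivity, if $\rho(h) = 0$ then $h(\alpha(m)) = 0$ for every $\alpha\in M^*$ and $m\in M$. But by definition $\T R M = \sum_{\alpha\in M^*} \alpha(M)$ is generated as an $R$-module by precisely these elements $\alpha(m)$, so $R$-linearity of $h$ forces $h = 0$.

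The substantive step is showing $\IM{\rho}\subseteq\Z{\End R{M^*}}$. The core difficulty is that an arbitrary $g\in\End R{M^*}$ is only $R$-linear and not $Q(R)$-linear, so one cannot pull the scalar $a/b$ through $g$ directly. The fix is to clear denominators first: for $h$ corresponding to $a/b$ with $b$ a nonzerodivisor in $R$, the identity $b\cdot\rho(h)(\alpha) = a\alpha$ in $M^*$ (both sides send $m$ to $a\alpha(m)$) together with $R$-linearity of $g$ yields $b\cdot g(\rho(h)(\alpha)) = a\cdot g(\alpha)$ in $M^*$. Evaluating at $m\in M$, and using that $g(\alpha)(m)\in\T R M$ so $h$ is defined on it, gives
\[ b\cdot g(\rho(h)(\alpha))(m) = a\cdot g(\alpha)(m) = b\cdot(a/b)\cdot g(\alpha)(m) = b\cdot\rho(h)(g(\alpha))(m). \]
Since $b$ is a nonzerodivisor in $R$, this forces $g\circ\rho(h) = \rho(h)\circ g$, so $\rho(h)\in\Z{\End R{M^*}}$.
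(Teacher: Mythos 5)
Your proof is correct and follows essentially the same route as the paper: identify $\End R {\T R M}$ with the fractions $a/b \in Q(R)$ preserving $\T R M$ via Lemma \ref{WiegandLeuschke}, then obtain centrality by clearing the denominator $b$ and cancelling the nonzerodivisor (the paper cancels in the torsionfree module $M^*$, you cancel in $R$ after evaluating at $m$ --- the same computation). Your direct generator argument for injectivity is just a rephrasing of the injectivity already built into diagram (\ref{inj2}).
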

\begin{proof}
Recall the identification from Lemma \ref{WiegandLeuschke}. Taking ${a}/{b}$ in $ \End R {\T R M}$, a map $\alpha$ in $M^*$ and an endomorphism $g$ in $\End R {M^*}$, we have:
\[ b \left( \dfrac{a}{b}g(\alpha) - g\left(\dfrac{a}{b} \alpha\right) \right)= 0.\]

Since $b$ is a nonzerodivisor and $M^*$ is torsionfree, we conclude \[\dfrac{a}{b} \cdot g(\alpha) = g\left(\dfrac{a}{b} \cdot \alpha\right).\]

Recall the injection $\rho: \End R {\T R M} \inj  \End R {M^*}$ constructed in (\ref{inj2}) sends  ${a}/{b}$ in $\End R {\T R M}$ to composition by ${a}/{b} $. Since  \[\dfrac{a}{b}g(\alpha) = g\left(\dfrac{a}{b} \alpha\right),\] for all $g \in \End R {M^*}$ and ${a}/{b} \in \End R {\T R M}$,  the image of $\rho$ is in the center of $\End R {M^*}$.  

It is clear that \[\rho\left(\dfrac{a}{b} \cdot \dfrac{a'}{b'}\right) (\alpha) =\dfrac{a}{b} \cdot \dfrac{a'}{b'} (\alpha)  = \rho \left(\dfrac{a}{b}\right) \left( \dfrac{a'}{b'} \alpha \right)   =  \rho \left(\dfrac{a}{b}\right) \rho\left( \dfrac{a'}{b'}\right) (\alpha ) \] 

and therefore $\rho$ is a monomorphism of $R$-algebras.
\end{proof}

\begin{remark} \label{torsionlesstransfer}
Let $M$ be an $R$-module and $I\subseteq J$ ideals. If $I$ contains a nonzerodivisor and $M$ is $J$-torsionless, then $M$ is $I$-torsionless. That is, all nonzero $m$ in $M$ have a nonzero image in $I$ under some $\alpha \in M^*$ which depends on $m$; recall Definition \ref{Itorsionless}. This follows from the fact that for all nonzero $m \in M$ there exists $\alpha \in \hom R M J$ with $\alpha(m) \not=0$ and therefore for any nonzerodivisor $x\in I$,  the product $x\alpha(m) \not=0$ with $x\alpha \in \hom R M I$. We use this observation in the proof of the following result.
\end{remark}
\begin{theorem}\label{main2}
Let $R$ be a Noetherian ring. Suppose $M$ is a finitely generated $R$-module such that $\T R M$ has positive grade. Then the map   \[\rho: \End R {\T R M} \lra \Z{\End R {M^*}}.\]
in (\ref{inj2}) is an $R$-algebra isomorphism.
\end{theorem}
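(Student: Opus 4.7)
By Proposition \ref{image2}, $\rho$ is an $R$-algebra monomorphism whose image already lies in $\Z{\End R {M^*}}$, so only surjectivity onto the center remains. My plan is to mirror the proof of Theorem \ref{main} by constructing a two-sided inverse $\rho' \colon \Z{\End R {M^*}} \to \End R {\T R M}$ via a second application of Suzuki's Theorem \ref{Suzuki1}, this time with $(M^*, \T R M)$ playing the roles of $(M, U)$. Because $\T R M$ has positive grade, Remark \ref{identifyend} guarantees that $\End R {\T R M}$ is commutative, so $\Z{\End R {\T R M}} = \End R {\T R M}$ and the target of Suzuki's construction will land on the nose inside $\End R {\T R M}$.

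To invoke Theorem \ref{Suzuki1}, I would verify its three hypotheses in turn. First, $M^*$ generates $\T R M$: for $t = \sum \alpha_i(m_i) \in \T R M$, one has $t = \sum \mathrm{ev}_{m_i}(\alpha_i)$, where $\mathrm{ev}_m \colon M^* \to \T R M$ sends $\alpha \mapsto \alpha(m)$; the image lies in $\T R M$ by the very definition of the trace ideal. Second, $M^*$ is $\T R M$-torsionless: for any nonzero $\alpha \in M^*$, choose $m \in M$ with $\alpha(m) \ne 0$, and then $\mathrm{ev}_m$ detects $\alpha$. Third, $\T R M$ is $M^*$-torsionless: here the positive grade hypothesis is essential. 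Given nonzero $t \in \T R M$, a nonzerodivisor $x \in \T R M$ forces $tx \ne 0$, so $t \cdot \T R M \ne 0$; writing a nonzero element of $t \cdot \T R M$ as $\sum (t\alpha_i)(m_i)$ yields some $\alpha \in M^*$ with $t\alpha \ne 0$ in $M^*$. The $R$-linear map $\T R M \to M^*$ sending $s \mapsto s\alpha$ then detects $t$.

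Theorem \ref{Suzuki1} then produces the ring monomorphism $\rho' \colon \Z{\End R {M^*}} \to \End R {\T R M}$ described, via the recipe of Remark \ref{sigma'}, by $\rho'(g)(t) = \sum g(\alpha_i)(m_i)$ for any presentation $t = \sum \alpha_i(m_i)$ of $t \in \T R M$. The final step is to verify $\rho \circ \rho' = \mathrm{id}$: applying $\rho'(g)$ to the one-term sum $\alpha(m) \in \T R M$ gives $g(\alpha)(m)$, so $\rho'(g) \cdot \alpha = g(\alpha)$ in $M^*$; unpacking the description of $\rho$ from (\ref{inj2}), this is exactly the statement $\rho(\rho'(g)) = g$. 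Combined with the injectivity of $\rho$, this forces $\rho$ to be an $R$-algebra isomorphism.

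The primary obstacle is the third Suzuki condition, namely that $\T R M$ is $M^*$-torsionless; this is precisely where the positive grade hypothesis becomes indispensable, and a careful exploitation of a nonzerodivisor in $\T R M$ is what drives the whole argument. The remaining steps are formal verifications that closely parallel Corollary \ref{Suzukicor} and the concluding computation in the proof of Theorem \ref{main}.
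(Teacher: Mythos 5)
Your proposal is correct and follows essentially the same route as the paper: Proposition \ref{image2} supplies the monomorphism $\rho$, Suzuki's Theorem \ref{Suzuki1} applied to the pair $(M^*, \T R M)$ supplies the reverse map, and the composite of the two is checked to be the identity. The only cosmetic differences are that you verify the torsionless hypotheses directly via evaluation maps (where the paper routes through Remarks \ref{Mdualtorsionless} and \ref{torsionlesstransfer} and Corollary \ref{Suzukicor}) and that you check $\rho \circ \rho' = \mathrm{id}$ on $\Z{\End R {M^*}}$ rather than the paper's $\rho' \circ \rho = \mathrm{id}$ on $\End R {\T R M}$, which changes nothing of substance.
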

\begin{proof}

Recall, $M^*$ is $\T R {M^*}$-torsionless, faithful and generates $\T R M$. Since $\T R M$ contains a nonzerodivisor and $\T R {M^*} \supseteq \T R M$, $M^*$ is $\T R {M}$-torsionless and since $M^*$ is faithful, $\T R M$ is $M$-torsionless; see Remarks \ref{Mdualtorsionless}, \ref{torsionlesstransfer} and the proof of Corollary \ref{Suzukicor}. By Theorem \ref{Suzuki1} there is an injection \[\Z{\End R {M^*}} \inj \End R {\T R {M}}.\] \[ q \mapsto \bar{q}\]
By Proposition \ref{image2}, there is another injection \[\rho: \End R {\T R M} {\inj} \Z{\End R {M^*}}\] 

which sends $a/b \in \End R {\T R M}$ to composition by $a/b$.

These maps are inverses. Indeed, given $t \in \T R M$, there exist $\alpha_i \in M^*$ and $\varphi_i \in \hom R {M^*} {\T  R M}$ such that \[ t = \sum_i \varphi_i(\alpha_i).\]  For $a/b \in \End R {\T R M}$, 
\begin{align*}
 \overline{\rho\left(\dfrac{a}{b}\right)} (t) &= \overline{\rho\left(\dfrac{a}{b}\right)} \left(\sum_i \varphi_i(\alpha_i) \right) \\
 & = \sum_i \varphi_i\left(\rho\left(\dfrac{a}{b}\right)\alpha_i\right)\\
 & = \sum_i \varphi_i\left(\dfrac{a}{b} \alpha_i\right)\\
 &=\dfrac{a}{b}\left(\sum_i \varphi_i\left( \alpha_i\right)\right) = \dfrac{a}{b} (t); 
\end{align*}

the penultimate equality following from $\T R M$ being torsionfree. That is, 
\[b\cdot \left[ \dfrac{a}{b}\left(\sum_i \varphi_i\left( \alpha_i\right)\right) - \sum_i \varphi_i\left(\dfrac{a}{b} \alpha_i\right)  \right]= 0 \]
implies $\sum_i \varphi_i\left({a}/{b} \cdot \alpha_i\right) = {a}/{b} \cdot\left(\sum_i \varphi_i\left( \alpha_i\right)\right) $. We conclude that $ \overline{\rho\left({a}/{b}\right)}   = a/b$, that is, the composition of the injections is identity.
\end{proof}

\begin{remark} When $M$ is reflexive and faithful, $\T R M$ contains a nonzerodivisor and $\T R {M^*} = \T R M$.  By Theorem  \ref{main2} \[\Z{\End R {M^{**}}} \cong \End R {\T R {M^*}}.\]
In other words,  \[\Z{\End R M} \cong \End R {\T R M}.\]
This is another proof of Theorem \ref{main}.

\end{remark}

\begin{corollary} \label{alliso}
Let $R$ be a Noetherian ring. Suppose $M$ is a finitely generated $R$-module such that $\T R M$ has positive grade.  Then \[ \End R {\T R {M^*}} \cong \End R {\T R M} \cong \Z{\End R {M^*}}\]
\end{corollary}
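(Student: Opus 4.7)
The plan is to obtain the two isomorphisms separately. By Proposition \ref{trace properties} \ref{trace shared}, $\T R M \subseteq \T R {M^*}$, so $\T R {M^*}$ also contains a nonzerodivisor and hence has positive grade. The isomorphism $\End R {\T R M} \cong \Z{\End R {M^*}}$ is then an immediate application of Theorem \ref{main2}, so it remains to construct the isomorphism $\End R {\T R {M^*}} \cong \End R {\T R M}$.

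I would establish this by realizing both endomorphism rings as the same subring of $Q(R)$. Lemma \ref{WiegandLeuschke} gives
\[
\End R {\T R M} = \{x \in Q(R) : x \T R M \subseteq \T R M\} \quad \text{and} \quad \End R {\T R {M^*}} = \{x \in Q(R) : x \T R {M^*} \subseteq \T R {M^*}\},
\]
so the claim reduces to showing these subsets of $Q(R)$ coincide. For one inclusion, suppose $x \T R {M^*} \subseteq \T R {M^*}$. Then for any $\alpha \in M^*$ and $m \in M$, $x\alpha(m) \in x \T R {M^*} \subseteq R$, so $m \mapsto x\alpha(m)$ defines an element $x\alpha \in M^*$ whose image necessarily lies in $\T R M$ by the very definition of the trace ideal; this forces $x \T R M \subseteq \T R M$. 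For the reverse inclusion, suppose $x \T R M \subseteq \T R M$. The same calculation gives $x\alpha \in M^*$ for each $\alpha \in M^*$, and writing $x = a/b$ with $b$ a nonzerodivisor, the identity $b\phi(x\alpha) = \phi(a\alpha) = a\phi(\alpha) = b \cdot x\phi(\alpha)$, valid for any $\phi \in M^{**}$ by $R$-linearity, yields $x\phi(\alpha) = \phi(x\alpha) \in \T R {M^*}$. Since $\T R {M^*}$ is generated as an $R$-module by the elements $\phi(\alpha)$ with $\phi \in M^{**}$ and $\alpha \in M^*$, it follows that $x$ stabilizes $\T R {M^*}$.

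The only technical point that requires care is the passage between $Q(R)$ and $R$ when invoking $R$-linearity of maps in $M^{**}$; clearing the denominator $b$ handles this, and I do not foresee any other obstacle.
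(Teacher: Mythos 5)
Your argument is correct, but for the isomorphism $\End R {\T R {M^*}} \cong \End R {\T R M}$ it takes a genuinely different route from the paper. The paper only proves the single inclusion $\End R {\T R {M^*}} \subseteq \End R {\T R M}$ (by dualizing $\T R M \subseteq \T R {M^*}$, whose cokernel is torsion), and then closes a circle of monomorphisms
\[
\End R {\T R {M^*}} \subseteq \End R {\T R M} \overset{\rho}{\inj} \Z{\End R {M^*}} \inj \End R {\T R {M^*}},
\]
the last arrow coming from Suzuki's result via Corollary \ref{Suzukicor} applied to $M^*$ (torsionless, faithful, generating $\T R {M^*}$); since the composite is the identity, all three maps are isomorphisms. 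You instead prove directly, by an element computation in $Q(R)$, that an $x\in Q(R)$ stabilizes $\T R M$ if and only if it stabilizes $\T R {M^*}$: one direction is essentially the paper's dualization argument phrased elementwise, while the other (using $\phi(x\alpha)=x\phi(\alpha)$ for $\phi\in M^{**}$, obtained by clearing the denominator $b$ and using that $b$ is invertible in $Q(R)$) replaces the appeal to Suzuki's theorem altogether. Your route is more self-contained and in fact delivers the stronger statement recorded separately as Corollary \ref{identify} (literal equality of the three rings inside $Q(R)$, after also tracking that $\rho$ acts as multiplication); the paper's route does less fresh computation by reusing the already-constructed maps $\rho$ and $\sigma'$, and the ``composite is the identity'' argument is what makes the subsequent identification in Corollary \ref{identify} immediate. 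Your preliminary observations — that $\T R{M^*}\supseteq \T R M$ forces positive grade of $\T R{M^*}$, and that Theorem \ref{main2} already gives $\End R {\T R M}\cong \Z{\End R {M^*}}$ — are exactly right and legitimate uses of the earlier results.
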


\begin{proof}

Recall $\T R {M^*} \supseteq \T R M$. Since $\T R M$ contains a nonzerodivisor, the module $\T R {M^*}/ \T R M$ is annihilated by that nonzerodivisor and is therefore torsion. Hence the inclusion $\T R M \subseteq \T R {M^*}$ yields an inclusion \[\hom R {\T R {M^*}} R \subseteq \hom R {\T R M} R\] as subsets of $Q(R)$. That is, \[ \End R {\T R {M^*}} \subseteq \End R {\T R M}.\]

Note, $M^*$ is torsionless and faithful, and also generates $\T R {M^*}$. Therefore, by Corollary \ref{Suzukicor}  and Remark \ref{identifyend} there is an injection \[\Z{\End R {M^*}} \inj \Z{\End R {\T R {M^*}}} = \End R {\T R {M^*}}.\]

By Theorem \ref{main2} there is  now a sequence of monomorphisms whose composition is identity on $\End R {\T R {M^*}}$:
\[
\pushQED{\qed}
\End R {\T R {M^*}} \subseteq \End R {\T R M} \overset{\rho}{\inj} \Z{\End R {M^*}} \inj \End R {\T R {M^*}}.
\qedhere
\] \end{proof}

\begin{corollary} \label{identify}
Let $R$ be a commutative Noetherian ring. Suppose $M$ is a finitely generated $R$-module such that $\T R M$ has positive grade.  Then $ \End R {\T R {M}}$,  $\End R {\T R {M^*}}$ and $\Z{\End R {M^*}}$ may be identified as $R$-submodules of the total ring of quotients $Q(R)$, and then one has \[\Z{\End R {M^*}} = \End R {\T R M} =  \End R {\T R {M^*}}\] as subsets of $Q(R)$.
\end{corollary}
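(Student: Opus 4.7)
The strategy is to promote the abstract isomorphisms of Corollary~\ref{alliso} to set-theoretic equalities inside $Q(R)$ by fixing a common embedding of all three rings into $Q(R)$.

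First, since $\T R M$ has positive grade it contains a nonzerodivisor, and hence so does $\T R {M^*}$ because $\T R {M^*} \supseteq \T R M$. Lemma~\ref{WiegandLeuschke} and Remark~\ref{identifyend} then realize both $\End R {\T R M}$ and $\End R {\T R {M^*}}$ canonically as $R$-subalgebras of $Q(R)$, namely
$$
  \End R {\T R I} \;=\; \left\{ \tfrac{a}{b} \in Q(R) \;:\; \tfrac{a}{b}\,\T R I \subseteq \T R I \right\}
$$
for $I \in \{M, M^*\}$. Under these identifications the containment $\T R M \subseteq \T R {M^*}$ immediately yields $\End R {\T R {M^*}} \subseteq \End R {\T R M}$ inside $Q(R)$, as already observed at the start of the proof of Corollary~\ref{alliso}.

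Next, I would transport the $Q(R)$-identification along $\rho$: Theorem~\ref{main2} sends $a/b \in \End R {\T R M}$ to the endomorphism of $M^*$ given by scalar multiplication by $a/b$, and since $M^*$ is torsionfree (Remark~\ref{Mdualtorsionless}) this scalar is uniquely recovered from the endomorphism. I would therefore identify each central endomorphism $q \in \Z {\End R {M^*}}$ with the unique element of $Q(R)$ that implements it as multiplication; with this identification, $\rho$ becomes the equality of subsets $\End R {\T R M} = \Z {\End R {M^*}}$ in $Q(R)$.

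Finally, I would revisit the three-term chain displayed at the end of the proof of Corollary~\ref{alliso},
$$
  \End R {\T R {M^*}} \;\subseteq\; \End R {\T R M} \;\overset{\rho}{\inj}\; \Z{\End R {M^*}} \;\inj\; \End R {\T R {M^*}},
$$
and verify that the Suzuki-type injection $\sigma'$ of Corollary~\ref{Suzukicor} applied to $M^*$ is compatible with the $Q(R)$-identifications; that is, it sends the central endomorphism ``multiplication by $q$'' on $M^*$ to the element $q \in \End R {\T R {M^*}} \subset Q(R)$. This compatibility is the main obstacle and the only nontrivial calculation: writing $t \in \T R {M^*}$ as $t = \sum \varphi_i(\alpha_i)$ with $\alpha_i \in M^*$ and $\varphi_i \in (M^*)^*$, the definition of $\sigma'$ gives $\sigma'(q)(t) = \sum \varphi_i(q\alpha_i)$, and torsionfreeness of $\T R {M^*}$ together with the $Q(R)$-linear extension of each $\varphi_i$ allows one to pull $q$ outside the sum to conclude $\sigma'(q)(t) = qt$. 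With this established, all three arrows in the chain above are subset inclusions in $Q(R)$, and since their composition is the identity on $\End R {\T R {M^*}}$, every inclusion collapses to an equality, yielding the claimed identifications.
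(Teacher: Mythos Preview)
Your proposal is correct and takes essentially the same approach as the paper: the paper's proof is a one-line deferral (``proceed like the proof of Corollary~\ref{inQ}''), and what you have written is precisely the detailed execution of that instruction, tracing the maps in the chain from Corollary~\ref{alliso} and verifying each is compatible with the $Q(R)$-embedding furnished by Lemma~\ref{WiegandLeuschke}. Your ``main obstacle'' calculation for $\sigma'$ is exactly the torsionfreeness argument already carried out in the proof of Theorem~\ref{main2} (the penultimate equality there), so no new idea is needed.
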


\begin{proof}
As in Corollary \ref{inQ}, this identification is an easy consequence of the definition of the maps established in Corollary \ref{alliso}, and Lemma \ref{WiegandLeuschke}. A proof proceeds like the proof of Corollary \ref{inQ}. 
\end{proof}

\section{Projective Endomorphism Rings} \label{freeEnd}

Let $R$ be a commutative ring. In this section and the next we use  results from Section \ref{mainsection} to relate the properties of $\End  R M$ to those of $M$.  Below, we concentrate on when $\End R M$ is a free $R$-module. 

The following result is well-known: 

\begin{lemma} \label{dualR} Let $R$ be a local  Noetherian ring  of depth $\leq 1$ and $I \subseteq R$ an ideal. If $I^* = R$ then $I=R$.
\end{lemma}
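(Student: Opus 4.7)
The plan is to argue by contradiction: assume $I \subsetneq R$, so $I$ is contained in the maximal ideal $\mathfrak{m}$, and derive a contradiction from the hypothesis that the natural map $R \to I^*$ (sending $r$ to multiplication by $r$) is an isomorphism.

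First, I would observe that surjectivity of this map means every $R$-homomorphism $I \to R$ is multiplication by an element of $R$, while injectivity is equivalent to $I$ being faithful, i.e.\ $\Ann{I} = 0$. From faithfulness I would deduce that $I$ contains a nonzerodivisor: otherwise $I$ would be contained in the union of the (finitely many) associated primes of $R$, hence by prime avoidance in some $\mathfrak{p} \in \operatorname{Ass}(R)$, and then any $0 \neq r \in R$ with $\mathfrak{p}\,r = 0$ would satisfy $rI = 0$, contradicting faithfulness.

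Next, having a nonzerodivisor $x \in I$ at hand, I would invoke Lemma \ref{WiegandLeuschke} to identify
\[
I^* \;=\; \left\{ \tfrac{a}{b} \in Q(R) : \tfrac{a}{b} \cdot I \subseteq R \right\}
\]
as a submodule of $Q(R)$ containing $R$. The hypothesis $I^* = R$ then translates into the concrete statement that any fraction $y/x \in Q(R)$ with $y I \subseteq x R$ must already lie in $R$, i.e.\ $y \in xR$.

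Finally, I would split on the depth. If $\depth R = 0$, then $\mathfrak{m} \in \operatorname{Ass}(R)$, so there is $0 \neq r \in R$ with $\mathfrak{m}\,r = 0$; then $I \subseteq \mathfrak{m}$ gives $Ir = 0$, contradicting the existence of the nonzerodivisor $x \in I$. If $\depth R = 1$, then since $x$ is a nonzerodivisor, $\depth R/xR = 0$, so $\mathfrak{m} \in \operatorname{Ass}(R/xR)$: there exists $y \in R \setminus xR$ with $\mathfrak{m}\,y \subseteq xR$. Because $I \subseteq \mathfrak{m}$, this gives $yI \subseteq xR$, so $y/x \in I^* = R$, forcing $y \in xR$—the desired contradiction. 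The one step requiring care is the depth $1$ case, where the payoff of translating $I^*$ into a subset of $Q(R)$ is exactly that the standard fact $\mathfrak{m} \in \operatorname{Ass}(R/xR)$ delivers a fraction $y/x$ violating $I^* = R$; the rest of the argument is essentially bookkeeping.
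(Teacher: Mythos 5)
Your argument is correct, and it takes a genuinely different route from the paper's. The paper simply applies $\hom R {\,?\,} R$ to $0 \to I \to R \to R/I \to 0$: if the natural map $i\colon \hom R R R \to \hom R I R$ is an isomorphism, the resulting exact sequence forces $\hom R {R/I} R = 0 = \Ext 1 R {R/I} R$, hence $\grade I \geq 2$, which is impossible for a proper ideal over a local ring of depth at most one; so $I=R$. You instead exhibit the obstruction concretely: injectivity of the natural map (faithfulness of $I$) plus prime avoidance yields a nonzerodivisor $x \in I$, which already disposes of the depth-zero case, and in depth one the fact that $R/xR$ has depth zero provides $y \notin xR$ with $\mathfrak{m}y \subseteq xR$, so that under the identification of Lemma \ref{WiegandLeuschke} the fraction $y/x$ lies in $I^*$ but not in $R$, i.e.\ it is a homomorphism $I \to R$ not given by multiplication by a ring element, contradicting surjectivity. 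The two proofs encode the same inequality (grade of a proper ideal is at most $\operatorname{depth} R$): your nonzerodivisor step corresponds to $\hom R {R/I} R = 0$, and your element $y/x$ is an explicit nonzero element of the cokernel of $i$, hence a witness to $\Ext 1 R {R/I} R \neq 0$. The paper's version is shorter; yours is more elementary (no Ext or grade machinery) and sits naturally in the fractional-ideal picture of Lemma \ref{WiegandLeuschke} and Remark \ref{identifyend}, which is exactly how the lemma gets applied later. Your careful use of both injectivity and surjectivity also makes visible why both are needed: over $R = k[x]/(x^2)$ with $I=(x)$, every homomorphism $I \to R$ is multiplication by a ring element yet $I \neq R$, so faithfulness cannot be dropped, and your reading of the hypothesis agrees with the paper's (``hence $i$ is an isomorphism'').
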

\begin{proof}
Applying $\hom R ? R$, to $0 \lra I \lra R \lra R/I \lra 0$ yields  an exact sequence  \[0 \lra \hom R {R/I} R \lra \hom R R R \overset{i}{\lra} \hom R I R \lra \Ext 1 R {R/I} R \lra 0.\] 

For $r\in \hom R R R$, the map $i(r) \in \hom R I R$ is the restriction of multiplication by $r$ to $I$. By assumption $I^* =R$,  hence $i$ is an isomorphism, and \[ \hom R {R/I} R = 0 =  \Ext 1 R {R/I} R.\] This implies grade $I \geq 2$,  which is not possible given depth $R \leq 1$. We conclude that $R/I = 0$, that is, $I = R$.
\end{proof}

 \begin{theorem} \label{endtrace} \label{endtracecor}
 Let $R$ be a local Noetherian ring of depth $\leq 1$ and $M$ a finitely generated reflexive $R$-module. If $\End R M$ has a free summand then so does $M$. Therefore, $\End R M$ is a free $R$-module only if $M$ is a free $R$-module.
 \end{theorem}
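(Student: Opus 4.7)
The strategy is to reduce to showing $\T R M = R$, after which Proposition~\ref{trace properties}\ref{trace R} gives the free summand of $M$. The key observation is that for any unital $R$-algebra $A$,
\[ \T R A = \{\phi(1_A) : \phi \in \hom R A R\}, \]
which follows because for each $\phi \in \hom R A R$ and $a \in A$, the map $\phi_a : b \mapsto \phi(ba)$ is $R$-linear and satisfies $\phi_a(1_A) = \phi(a)$. Applied to $A = \End R M$, the hypothesis that $\End R M$ has a free summand together with Proposition~\ref{trace properties}\ref{trace R} yields $\T R{\End R M} = R$, so there exists $\phi_0 \in (\End R M)^*$ with $\phi_0(\mbox{id}_M) = 1$.

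This forces $M$ to be faithful: if $r \in \Ann M$ then $r \cdot \mbox{id}_M = 0$ as an endomorphism, whence $r = \phi_0(r \cdot \mbox{id}_M) = 0$. Since $M$ is faithful and reflexive, Theorem~\ref{main} and Corollary~\ref{inQ} identify $\End R{\T R M} \cong \Z{\End R M}$ as $R$-subalgebras of $Q(R)$ via the isomorphism $\sigma$, with $\sigma(1) = \mbox{id}_M$. The composition $\phi_0 \circ \sigma : \End R{\T R M} \to R$ is then $R$-linear and sends $1$ to $1$, so it splits the structure inclusion $R \hookrightarrow \End R{\T R M}$, yielding an $R$-module decomposition $\End R{\T R M} = R \oplus K$.

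Because $\End R{\T R M} \subseteq Q(R)$, every $k \in K$ can be written $k = a/b$ with $b \in R$ a nonzerodivisor, so $bk = a$ lies in $R \cap K = 0$; hence $k = 0$ and $\End R{\T R M} = R$. By Proposition~\ref{trace properties}\ref{trace dual} this is the statement $(\T R M)^* = R$, and Lemma~\ref{dualR}---where the hypothesis $\mbox{depth}\, R \leq 1$ is used---delivers $\T R M = R$. A final application of Proposition~\ref{trace properties}\ref{trace R} produces the desired free summand of $M$.

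For the ``therefore'' clause, write $M = R^n \oplus M'$ with $n$ maximal so that $M'$ has no free summand; then $M'$ is reflexive as a summand of $M$, and $\End R{M'}$ is an $R$-module direct summand of the free module $\End R M$, so $\End R{M'}$ is projective, hence free over the local ring $R$. If $M'$ were nonzero, the first part of the argument applied to $M'$ would produce a free summand of $M'$, contradicting maximality, so $M' = 0$ and $M \cong R^n$. The principal obstacle is the trace-as-content identity for unital algebras, which is what converts the free-summand hypothesis on $\End R M$ into a splitting of the canonical inclusion $R \hookrightarrow \End R{\T R M}$; once that is in hand, Theorem~\ref{main} and the elementary observation that a finitely generated $R$-submodule of $Q(R)$ which contains $R$ as a direct summand must equal $R$ finish the argument.
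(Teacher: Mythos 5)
Your argument is correct, and it reaches the crucial identity $\End R {\T R M}=R$ by a genuinely different route than the paper. The paper first notes that a free summand of $\End R M$ forces $M$ to be faithful, and then computes, inside $Q(R)$, the chain $\T R M ^*=\End R {\T R M}=\End R {\T R {\MO}}=\End R {\T R {(\MO)^*}}=\End R {\T R {\End R M}}=R$, using $\T R M=\T R{\MO}$, Corollary \ref{identify} (hence the machinery of Theorem \ref{main2}), reflexivity to identify $(\MO)^*$ with $\End R M$, and $\T R {\End R M}=R$; after that, Lemma \ref{dualR} (where depth $\le 1$ enters) and Proposition \ref{trace properties}\ref{trace R} finish exactly as in your write-up. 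You instead convert $\T R{\End R M}=R$ into a single functional $\phi_0$ with $\phi_0(\mathrm{id}_M)=1$ (your ``trace-as-content'' observation for unital algebras is valid), deduce faithfulness from it, and then use only the monomorphism $\sigma$ of (\ref{inj1}) with $\sigma(1)=\mathrm{id}_M$: the splitting $\phi_0\circ\sigma$ of $R\hookrightarrow\End R{\T R M}$, combined with the elementary fact that an $R$-submodule of $Q(R)$ containing $R$ as a direct summand is torsion-free modulo $R$ and hence equals $R$, gives $\End R{\T R M}=R$ directly. This buys a more self-contained proof: you never need Corollary \ref{identify} or the full isomorphism of Theorem \ref{main} (the image of $\sigma$ being central is not even used), only Proposition \ref{algebra map}/diagram (\ref{inj1}) and the identification of Remark \ref{identifyend}; the paper's computation is shorter once its Section \ref{mainsection} machinery is in place. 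For the final clause the paper is slightly slicker --- writing $M=N\oplus R$ it observes $\End R M\cong M\oplus\hom R N M$, so $M$ is a summand of a free module --- whereas your maximal-free-summand induction (with the observations that summands of reflexive modules are reflexive and that $\End R{M'}$ is a summand of $\End R M$) is also correct, just a bit longer.
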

 
 \begin{proof}
 Since $\End R M$ has a free summand, $\End R M$ is faithful. Therefore $M$ is faithful in addition to being reflexive, and hence $\T R M $ has positive grade by Proposition \ref{trace properties} \ref{trace shared},\ref{trace grade}. Then, as $R$-submodules of $Q(R)$

\begin{align*}
\T R M^* &= \End R {\T R M} & \text{ by Proposition } \ref{trace properties} \ref{trace dual}\\
 &= \End R {\T R {\MO}}  & \because \T R M = \T R {\MO} \\
 &= \End R {\T R {(\MO)^*}}  &\text{ by Corollary } \ref{identify} \\
 & =\End R {\T R {\End R M}}  & \because M \text{ is reflexive }\\
 & = R & \because \T  R {\End R M} = R.
\end{align*}

Since depth $R \leq 1$, Lemma \ref{dualR} applies and yields \[ \T R M  = R,\] and hence $M$ has a free summand.   

We write $M = N \oplus R$ for some $R$-module $N$. If $\End R M$ is a free $R$-module, then for some $n \in \mathbb{N}$ 
\[ R^n \cong \End R M \cong \hom R {N \oplus R} M \cong M \oplus \hom R N M.\]

As a direct summand of a free module, $M$ is projective and therefore free. 
\end{proof}

\begin{definition} Let $R$ be a local Noetherian  ring. A nonzero $R$-module $M$ is called \emph{maximal Cohen-Macaulay} (MCM) provided the depth of $M$ is equal to the Krull dimension of $R$.  
\end{definition}

\begin{definition} \label{gorenstein}
Let $R$ be a Noetherian local ring. Then $R$ is called Gorenstein provided $\Ext i R k R =0$ for some $i > \dim R$, where $k$ is the residue field of $R$; see \cite[Theorem 18.1]{Mats}. 

A Noetherian ring, $R$, is called Gorenstein provided its localization at every maximal ideal is a Gorenstein local ring. 
\end{definition}
The next result is \cite[Theorem 3.1]{Vasc1}.

\begin{corollary} 
Let $R$ be a one-dimensional Gorenstein ring and $M$ a
finitely generated $R$-module. If $\End R M$ is  projective then the $R$-module $M$ is projective.\end{corollary}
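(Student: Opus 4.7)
The plan is to reduce to the local case and then invoke the preceding Theorem~\ref{endtrace}. Since $M$ is finitely generated over the Noetherian ring $R$, for each maximal ideal $\mathfrak{m}$ of $R$ one has $(\End R M)_\mathfrak{m} \cong \End{R_\mathfrak{m}}{M_\mathfrak{m}}$, and projectivity of a finitely generated module is detected locally. It therefore suffices to prove that $M_\mathfrak{m}$ is free whenever $R_\mathfrak{m}$ is a one-dimensional Gorenstein local ring and $\End{R_\mathfrak{m}}{M_\mathfrak{m}}$ is free over $R_\mathfrak{m}$. So I may assume throughout that $R$ is a one-dimensional Gorenstein local ring and $\End R M$ is a free $R$-module.

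To invoke Theorem~\ref{endtrace} I need $M$ to be reflexive, which over a one-dimensional Gorenstein local ring is equivalent to $M$ being torsion-free (i.e.\ maximal Cohen--Macaulay). So the essential task is to rule out torsion in $M$. Let $T \subseteq M$ be the torsion submodule. Because $\dim R = 1$, $T$ has finite length and is annihilated by some nonzerodivisor $r_0 \in \mathfrak{m}$. The inclusion $T \hookrightarrow M$ induces an injection $\hom R M T \hookrightarrow \End R M$: the source is killed by $r_0$ and is therefore $R$-torsion, while the target is free and hence torsion-free. This forces $\hom R M T = 0$. But if $T$ were nonzero, its socle would contain a copy of the residue field $k$, yielding an inclusion $k \hookrightarrow T$; composing this with any surjection $M \twoheadrightarrow k$ produced by Nakayama's lemma would yield a nonzero element of $\hom R M T$, a contradiction. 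Hence $T = 0$ and $M$ is reflexive.

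Finally, $\End R M$ is free and nonzero, so it contains $R$ as a direct summand, and Theorem~\ref{endtrace} applied to the now-reflexive module $M$ yields that $M$ itself is free. The main obstacle is the middle step; its resolution hinges on the observation that every homomorphism $M \to T$ gives a torsion element of $\End R M$ via post-composition with $T \hookrightarrow M$, which, thanks to the freeness hypothesis, forces $\hom R M T = 0$ and hence $T = 0$.
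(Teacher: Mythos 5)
Your proof is correct, and its skeleton matches the paper's: reduce to the local case, show that $M$ is torsion-free (hence maximal Cohen--Macaulay and therefore reflexive over the one-dimensional Gorenstein local ring), and finish with Theorem \ref{endtracecor}. Where you differ is the middle step. The paper gets $\depth M \geq 1$ in one line from the formula $\Ass {\End R M} = \Supp (M) \cap \Ass M$: since $\End R M$ is free and $R$ is Cohen--Macaulay of dimension one, $\mathfrak{m}$ is not an associated prime of $\End R M$, hence not of $M$, so $M$ is MCM. You instead kill the torsion submodule $T$ by hand: $\hom R M T$ is annihilated by a nonzerodivisor (as $T$, being supported at $\mathfrak{m}$, has finite length) yet embeds, via postcomposition with $T \hookrightarrow M$, into the torsion-free module $\End R M$, so $\hom R M T = 0$; and a nonzero $T$ would yield a nonzero composite $M \twoheadrightarrow k \hookrightarrow T$ through the socle, a contradiction. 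Both arguments exploit exactly the same feature of the hypothesis---a free endomorphism module cannot receive the $\mathfrak{m}$-torsion that a non-MCM $M$ would force---so neither is more general; the paper's version is shorter because it quotes the Ass--Hom formula, while yours is more elementary and self-contained at the cost of the finite-length and socle bookkeeping. Your appeal to ``torsion-free $\Leftrightarrow$ reflexive'' over a one-dimensional Gorenstein local ring is the same fact the paper cites from Vasconcelos, so nothing essential is missing; the only cosmetic caveat is the trivial case $M=0$, where the surjection onto $k$ is unavailable but the conclusion is immediate.
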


\begin{proof}
We may assume $R$ is local and therefore $\End R M$ is free. Since \[\Ass {\End R M} = \Supp (M) \cap \Ass M = \Ass M,\] the maximal ideal is not an associated prime of $M$. It follows that $M$ is MCM and being MCM over a Gorenstein ring,  $M$ is also reflexive; see \cite[Corollary 2.3]{Vasc1}. Then by Theorem \ref{endtracecor}, $M$ is free. 
\end{proof}

\section{Balanced Modules} \label{balanced}

In this section, let $R$ be a commutative Noetherian ring and let $M$ be a finitely generated $R$-module.
\begin{definition}
Let $R \lra\Z{ \End R M}$ be the natural map from $R$ to the center of $\End R M$ where $r\in R$ is sent to multiplication by $r$. The $R$-module $M$ is \emph{balanced} when this map is an isomorphism. 
\end{definition}

 We discuss the implications of balancedness for reflexive modules when $R$ has depth less than or equal to one. Over rings of arbitrary depth, we establish a type of purity theorem for balancedness; this property need only be checked at primes of grade less than or equal to one.

The following proposition approaches a converse to Lemma \ref{Kaplansky}. The proof of the proposition adapts ideas from Vasconcelos \cite{Vasccommend}.   

\begin{proposition}\label{balanceddim1} Let $R$ be a local  ring  of depth $\leq 1$  and $M$ a finitely generated reflexive  $R$-module. Then $M$ is balanced if and only if $M$ has a free summand.
\end{proposition}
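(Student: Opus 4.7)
The plan is to prove the two implications separately, exploiting the results of Section \ref{mainsection} to translate between $\Z{\End R M}$ and $\End R {\T R M}$.

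The easier direction, free summand $\implies$ balanced, requires neither reflexivity nor the depth hypothesis. If $M$ has a nonzero free summand, then $M$ is faithful and, because $R$ is local, Proposition \ref{trace properties} \ref{trace R} yields $\T R M = R$. Lemma \ref{Kaplansky} then gives $\Z{\End R M} \cong R$; moreover, the proof of that lemma shows the isomorphism is induced by the natural multiplication map $r \mapsto (\mbox{mult.\ by } r)$, so $M$ is balanced.

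For the converse, my plan is the following. Balancedness forces the natural map $R \to \Z{\End R M}$ to be injective, so $\Ann M = 0$ and $M$ is faithful. With $M$ finitely generated, reflexive, and faithful, Theorem \ref{main} supplies an $R$-algebra isomorphism $\sigma : \End R {\T R M} \to \Z{\End R M}$; also, Proposition \ref{trace properties} \ref{trace grade} guarantees $\T R M$ has positive grade, so Remark \ref{identifyend} and Corollary \ref{inQ} identify $\End R {\T R M}$ with a subring of $Q(R)$ in a way that makes $\sigma(a/b)$ act on $M$ as multiplication by $a/b$. Under this identification the composition
\[
R \;\longrightarrow\; \End R {\T R M} \;\xrightarrow{\;\sigma\;}\; \Z {\End R M}
\]
is precisely the natural multiplication map, which is an isomorphism by assumption. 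Hence $\End R {\T R M} = R$ inside $Q(R)$, and Proposition \ref{trace properties} \ref{trace dual} rewrites this as $(\T R M)^* = R$ in the sense of Lemma \ref{dualR}. Since $\depth R \leq 1$, Lemma \ref{dualR} forces $\T R M = R$, and the local case of Proposition \ref{trace properties} \ref{trace R} then delivers a free summand of $M$.

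The main obstacle I anticipate is the bookkeeping: I have to verify that the various identifications --- Theorem \ref{main}'s isomorphism $\sigma$, Kaplansky-style multiplication maps, and the embedding $\End R {\T R M} \inj Q(R)$ from Remark \ref{identifyend} --- are all compatible with the natural $R$-algebra structure, so that the balancedness hypothesis on $M$ translates cleanly into the input $I^* = R$ required by Lemma \ref{dualR}. The depth restriction enters exactly once, and only at this final invocation; everything preceding it is valid over an arbitrary Noetherian local ring.
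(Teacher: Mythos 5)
Your proposal is correct and follows essentially the same route as the paper: the paper likewise derives faithfulness from balancedness, uses Corollary \ref{inQ} (i.e., Theorem \ref{main} together with the identification inside $Q(R)$) to conclude $\End R {\T R M}=\Z{\End R M}=R$, applies Lemma \ref{dualR} to get $\T R M=R$, and then Proposition \ref{trace properties}\ref{trace R}; the converse is handled, as you do, by running the proof of Lemma \ref{Kaplansky} directly on $M=N\oplus R$. Your extra care that the various identifications carry the natural multiplication map to the natural multiplication map is exactly what Corollary \ref{inQ} records, so no gap remains.
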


\begin{proof}
If $M$ has a free summand, then  $M$ is balanced (apply the proof of Lemma \ref{Kaplansky} for $n=0$).

Now, suppose that $M$ is balanced. Then $M$ is also faithful since:
\begin{align*}
\Ann M  & = \Ann {\End R M}\\
& \subseteq \Ann {\Z{\End R M}} \\
&= \Ann R = 0.
\end{align*}

 By Corollary \ref{inQ}, $ \End R {\T R M} = \Z{\End R M} = R $. That is to say, $\T R M ^* =R$. By Lemma \ref{dualR} we conclude that $\T R M = R$. Therefore $M$ has a free summand; see Proposition \ref{trace properties} \ref{trace R}. \end{proof}
 
 The following is a corollary of Theorem \ref{endtrace} and Proposition \ref{balanceddim1}.
 \begin{corollary}
 Let $R$ be a local  ring  of depth $\leq 1$ and $M$ a finitely generated reflexive $R$-module. The following are equivalent:
 \begin{enumerate}[label=(\roman*)]
 \item $\End R M$ has a free summand;
 \item $M$ has a free summand;
 \item $M$ is balanced.    \qed
 \end{enumerate}
 \end{corollary}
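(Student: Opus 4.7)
The plan is to close a cycle of implications among (i), (ii) and (iii) by directly invoking the two results cited and supplying the one elementary link that is not already stated. Specifically, I would argue (i) $\Rightarrow$ (ii) $\Rightarrow$ (i) and (ii) $\iff$ (iii), so that all three conditions are equivalent.

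First, (i) $\Rightarrow$ (ii) is exactly the content of Theorem \ref{endtrace}: over a local Noetherian ring of depth $\leq 1$, any finitely generated reflexive module whose endomorphism ring has a free summand itself has a free summand. Next, the equivalence (ii) $\iff$ (iii) is precisely Proposition \ref{balanceddim1}, which under the same hypotheses characterizes balancedness of reflexive modules by the presence of a free summand. So the only implication not already recorded is (ii) $\Rightarrow$ (i).

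For (ii) $\Rightarrow$ (i), suppose $M = N \oplus R$. Then
\[
\End R M \cong \hom R {N \oplus R}{N \oplus R} \cong \hom R N N \oplus \hom R N R \oplus \hom R R N \oplus \hom R R R,
\]
and the final summand is canonically isomorphic to $R$. Thus $\End R M$ has a free $R$-summand, giving (i). This makes no real use of the depth hypothesis and is essentially formal.

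I expect no significant obstacle: the corollary is a packaging result. The only mild subtlety is to make sure one does not accidentally appeal to faithfulness or reflexivity when proving (ii) $\Rightarrow$ (i); but as above this implication is a pure Hom decomposition and requires nothing beyond $M$ having $R$ as a summand. The genuine content of the corollary lies in the previously established Theorem \ref{endtrace} and Proposition \ref{balanceddim1}.
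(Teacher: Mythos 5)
Your proposal is correct and matches the paper's intended argument: the paper states this as an immediate corollary of Theorem \ref{endtrace} ((i) $\Rightarrow$ (ii)) and Proposition \ref{balanceddim1} ((ii) $\Leftrightarrow$ (iii)), with the remaining implication (ii) $\Rightarrow$ (i) being exactly the formal Hom decomposition you give.
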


\begin{definition}  We say a property of  an $R$-module $M$ holds \emph{in codimension n} if the $R_{\mathfrak{p}}$-module $M_{\mathfrak{p}}$ holds the property for all $ \mathfrak{p} \in \text{Spec } R$ such that grade $\mathfrak{p} \leq n$.  

Given that if $M$ has a free summand  then $M$ is balanced (Remark \ref{Kaplansky}), the following proposition extends \cite[Proposition 2.1]{Vasccommend}. 

\begin{proposition} \label{balancedpuritytorsionless}
Let $M$ be a finitely generated, torsionless and faithful $R$-module. If $M$  has a free summand in codimension one, then $M$ is balanced. 
\end{proposition}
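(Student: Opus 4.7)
The plan is to show that the trace ideal $\T R M$ has grade at least two; this forces $\End R {\T R M} = R$, and then the monomorphism of Corollary \ref{Suzukicor} will squeeze $\Z {\End R M}$ between $R$ and $R$.

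First I would convert the codimension-one hypothesis into an ideal-theoretic statement about $\T R M$. For any prime $\mathfrak{p}$ with $\grade \mathfrak{p} \leq 1$, the module $M_\mathfrak{p}$ has a free summand over $R_\mathfrak{p}$, so Proposition \ref{trace properties} \ref{trace R} gives $\T{R_\mathfrak{p}}{M_\mathfrak{p}} = R_\mathfrak{p}$; by Proposition \ref{trace properties} \ref{trace extend} this coincides with $(\T R M)_\mathfrak{p}$, so $\T R M \not\subseteq \mathfrak{p}$. Next I would extract an $R$-regular sequence of length two from $\T R M$ via two applications of prime avoidance: since every associated prime of $R$ has grade zero, $\T R M$ avoids all of them and hence contains a nonzerodivisor $x$; every associated prime of $R/(x)$ has grade exactly one in $R$, so $\T R M$ avoids these as well, yielding $y \in \T R M$ that is regular modulo $x$. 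Therefore $\grade{\T R M} \geq 2$.

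With this in hand, Example \ref{grade} applied to $I = \T R M$ shows that every $R$-linear map $\T R M \to R$ is multiplication by an element of $R$, so $\hom R {\T R M} R = R$; by Proposition \ref{trace properties} \ref{trace dual}, $\End R {\T R M} = R$. Since $M$ is torsionless and faithful, Corollary \ref{Suzukicor} yields a monomorphism of $R$-algebras $\sigma':\Z{\End R M} \inj \End R {\T R M} = R$. The natural map $R \to \Z{\End R M}$ is injective because $M$ is faithful, and the construction in Remark \ref{sigma'} carries multiplication by $r\in R$ on $M$ to multiplication by $r$ on $\T R M$, i.e., to $r$ itself under the identification $\End R {\T R M} = R$. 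Hence the composition $R \to \Z{\End R M} \to R$ is the identity, both arrows are isomorphisms, and $M$ is balanced.

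The main obstacle I anticipate is the grade-two argument: the hypothesis only tells us $\T R M$ avoids primes of grade at most one, and translating this into an actual $R$-regular sequence of length two requires identifying the grade of the associated primes of $R/(x)$ for an $R$-regular $x \in \T R M$, so that prime avoidance can be invoked a second time.
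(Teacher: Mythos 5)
Your proof is correct, and it reaches the crucial equality $\End R {\T R M} = R$ by a genuinely different mechanism than the paper, while sharing the same outer frame. You translate the codimension-one hypothesis into ``$\T R M$ lies in no prime of grade at most one'' (via Proposition \ref{trace properties} \ref{trace R} and \ref{trace extend}) and then build an explicit regular sequence of length two inside $\T R M$: prime avoidance over $\Ass R$, then over $\Ass {R/(x)}$, whose members do have grade one since they contain the nonzerodivisor $x$ and their localizations have depth one, so $\grade \mathfrak{p} \leq \depth R_{\mathfrak{p}} = 1$. With $\grade {\T R M} \geq 2$ in hand, Example \ref{grade} and Proposition \ref{trace properties} \ref{trace dual} give ${\T R M}^* = \End R {\T R M} = R$, and the sandwich $R \inj \Z{\End R M} \inj \End R {\T R M}$ (faithfulness plus Corollary \ref{Suzukicor}), together with your correct observation that $\sigma'$ carries multiplication by $r$ to $r$, forces both maps to be isomorphisms. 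The paper keeps the same sandwich but never computes the grade of $\T R M$ itself: it forms the cokernel $X$ of $R \inj C := \End R {\T R M}$, notes $X_{\mathfrak{p}} = 0$ at primes of grade at most one so that $I = \Ann X$ has grade at least two, and then kills $X$ by applying $\hom R {R/I} {?}$ to $0 \to R \to C \to X \to 0$, using $\Ext 1 R {R/I} R = 0$ and $\hom R {R/I} C = 0$ (as $C$ is torsionfree and $R/I$ is torsion). Both routes ultimately rest on the vanishing of Hom and $\Ext{1}{R}{R/I}{R}$ against a grade-two ideal; yours is more hands-on and yields the extra intermediate fact $\grade {\T R M} \geq 2$ by reusing Example \ref{grade}, while the paper's cokernel-annihilator argument avoids producing an explicit regular sequence and the attendant bookkeeping about associated primes of $R/(x)$.
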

\begin{proof}
Trace ideals behave well under localization by Proposition \ref{trace properties} \ref{trace extend}. Thus, by hypothesis,  ${\T R M}_{\mathfrak{p}}$ is free for all  prime ideals $\mathfrak{p}$  with depth $R_{\mathfrak{p}} \leq 1$. 

There are injections \begin{equation}\label{sandwich} R \inj \Z{\End R M}  \inj \End R {\T R M} ; \end{equation}
the first  one holds because $M$ is faithful, the second because $M$ is also torsionless; see Corollary \ref{Suzukicor}. Note, an $r\in R$ is sent to multiplication by $r$ under the first map and the composition.  

Set $C: = \End R {\T R M}$ and consider the exact sequence induced by the compositions of  injections in (\ref{sandwich}):
\begin{equation}  \label{centerses}
 0  \lra R \lra C \lra X \lra 0. 
\end{equation}

Notice $X_{\mathfrak{p}}= 0$ for all  prime ideals $\mathfrak{p}$  with grade $\mathfrak{p} \leq 1$. Therefore $X$ has a nonzero annihilator,  say $I$, with grade $ I \geq 2$. Apply $\hom R {R/I} ?$ to (\ref{centerses}) to get an exact sequence
\[ 0 \rightarrow  \hom R {R/I} R \rightarrow \hom R {R/I } C \rightarrow \hom R {R/I} X \rightarrow \Ext 1 R {R/I} R .\]

First,  $ \Ext 1 R {R/I} R =0$ by \cite[Thm 1.2.5]{BandH}. Second,  as an ideal $\T R M$ is torsionfree, hence $C$ is torsionfree. Recall, also, that since $I$ contains a nonzerodivisor, $R/I$ is torsion.  Therefore $\hom R {R/I } C  = 0$. Together, these force $ \hom R {R/I} X =0$. However, as $I = \Ann {X}$,  this implies $X=0$. That is to say, the inclusion $R \inj \End R {\T R M}$ is an equality. 

Then (\ref{sandwich}) reads:   \[R \inj \Z{\End R M} \inj R,\] and since both the first injection and  the composition send $r \in R$ to multiplication by $r$:
\[\pushQED{\qed}
{{\Z{\End R M}} = R.}
\qedhere\] 
\end{proof}

\end{definition} 
The following is a corollary of Propositions \ref{balanceddim1} and  \ref{balancedpuritytorsionless}.
\begin{corollary} \label{balancedpurity}
 Let $M$ be a finitely generated, reflexive and faithful $R$-module. The following are equivalent:
 \begin{enumerate}[label=(\roman*)]
 \item  $M$ is balanced in codimension one;
 \item  $M$ is balanced.  \qed
 \end{enumerate}
\end{corollary}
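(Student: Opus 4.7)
The plan is to prove the equivalence by a direct local-to-global argument, chaining together the two preceding propositions.

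The direction (ii) $\Rightarrow$ (i) is essentially formal: for a finitely generated module over a Noetherian ring, $\End R M$ commutes with localization, and so does taking the center of an $R$-algebra. Hence if the canonical map $R \to \Z{\End R M}$ is an isomorphism, then its localization at any prime $\mathfrak{p}$, namely $R_{\mathfrak{p}} \to \Z{\End {R_{\mathfrak{p}}} {M_{\mathfrak{p}}}}$, is also an isomorphism. In particular this holds for every $\mathfrak{p}$ with $\grade{\mathfrak{p}} \leq 1$, so $M$ is balanced in codimension one.

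For the substantive direction (i) $\Rightarrow$ (ii), the strategy is to upgrade the hypothesis ``$M$ is balanced in codimension one'' to ``$M$ has a free summand in codimension one,'' and then invoke Proposition \ref{balancedpuritytorsionless}. Fix a prime $\mathfrak{p}$ of small codimension. Reflexivity and faithfulness pass to the localization $M_{\mathfrak{p}}$, and by hypothesis $M_{\mathfrak{p}}$ is balanced. Proposition \ref{balanceddim1}, applied to the local ring $R_{\mathfrak{p}}$, then yields that $M_{\mathfrak{p}}$ has a free summand. Consequently $M$ has a free summand in codimension one; since $M$ is reflexive (hence torsionless) and faithful, Proposition \ref{balancedpuritytorsionless} now yields that $M$ is balanced.

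The one point that requires real care is matching the local hypothesis: Proposition \ref{balanceddim1} is phrased for local rings of depth at most one, whereas ``codimension one'' in the paper is defined via grade, and $\grade{\mathfrak{p}} \leq \depth R_{\mathfrak{p}}$ can be a strict inequality. A clean workaround is to bypass Proposition \ref{balanceddim1} altogether: at each $\mathfrak{p}$ with $\grade{\mathfrak{p}} \leq 1$, balancedness of $M_{\mathfrak{p}}$ combined with Corollary \ref{inQ} gives directly that the natural map $R_{\mathfrak{p}} \to \End {R_{\mathfrak{p}}} {\T {R_{\mathfrak{p}}} {M_{\mathfrak{p}}}}$ is an isomorphism. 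This kills the cokernel $X := \End R {\T R M}/R$ locally at every such prime, forcing $\grade{\Ann X} \geq 2$; the final portion of the proof of Proposition \ref{balancedpuritytorsionless} then kills $X$ entirely and yields $R = \End R {\T R M} = \Z{\End R M}$.
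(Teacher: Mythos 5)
Your proposal is correct and follows the paper's intended route: the paper deduces this corollary precisely by chaining Propositions \ref{balanceddim1} and \ref{balancedpuritytorsionless}, and the direction (ii) $\Rightarrow$ (i) is the formal localization step you describe (which is legitimate here because $\End R M$ is a finite $R$-module, so both $\operatorname{End}$ and the center commute with localization, and $M_{\mathfrak{p}}$ stays faithful and reflexive). The point you flag is a genuine subtlety that the paper's one-line citation glosses over: ``codimension one'' is defined via $\grade{\mathfrak{p}} \leq 1$, whereas Proposition \ref{balanceddim1} requires $\depth R_{\mathfrak{p}} \leq 1$, and $\grade{\mathfrak{p}} \leq \depth R_{\mathfrak{p}}$ can be strict. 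Your workaround is valid: at each prime with $\grade{\mathfrak{p}} \leq 1$, Corollary \ref{inQ} (applied over $R_{\mathfrak{p}}$, using that trace ideals localize by Proposition \ref{trace properties} \ref{trace extend}) turns balancedness of $M_{\mathfrak{p}}$ into $R_{\mathfrak{p}} = \End {R_{\mathfrak{p}}} {\T {R_{\mathfrak{p}}} {M_{\mathfrak{p}}}}$, so the cokernel $X$ of $R \inj \End R {\T R M}$ vanishes at all such primes, $\grade{\Ann X} \geq 2$ unless $X = 0$, and the $\hom R {R/I} {?}$ argument ending the proof of Proposition \ref{balancedpuritytorsionless} finishes. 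An alternative repair that stays closer to the stated propositions: apply Proposition \ref{balanceddim1} only at primes with $\depth R_{\mathfrak{p}} \leq 1$ (such primes have grade at most one, so the codimension-one hypothesis covers them), and note that the proof of Proposition \ref{balancedpuritytorsionless} only uses its hypothesis at these primes, since vanishing of $X$ there already forces $\grade{\Ann X} \geq 2$. Either fix closes the argument, so your proof is complete; your version has the merit of making explicit a grade-versus-depth point the paper leaves silent.
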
 

\section{One-Dimensional Gorenstein Rings} \label{HWsection}

In this section we apply the results of the previous sections to rigid modules over Gorenstein rings of dimension one. In what follows, $R$ is a commutative ring. 

\begin{definition}
An  $R$-module, $M$, is called \emph{rigid} if $\Ext 1R M M = 0$. 
\end{definition}

\begin{remark} \label{HJ}
Let $R$ be a one-dimensional Gorenstein local ring. If $M$ is maximal Cohen-Macaulay then $\Ext 1 R M M  = 0$ implies $\MO$ is maximal Cohen-Macaulay; see \cite[Theorem 5.9]{HJsymmetry}. The converse holds when $\Ext 1 R M M$ has finite length, for example, when $M$ is free on the punctured spectrum.
\end{remark}

\begin{lemma} \label{same} \label{switchring}
 Let $S$ be a ring such that $R \subseteq S \subseteq Q(R)$ and suppose $M$ and $N$ are $S$-modules. If $N$ is torsionfree as an $R$-module then $\hom R M N  = \hom S M N$. If $M\otimes_R N$ is torsionfree as an $R$-module, then $M \otimes_R N  = M \otimes_S N$. \qed
\end{lemma}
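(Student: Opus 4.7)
The plan is to exploit the fact that every $s \in S \subseteq Q(R)$ can be written as $s = a/b$ with $a \in R$ and $b$ a nonzerodivisor of $R$, so that multiplication by $b$ is injective on every $R$-torsionfree module. Both claims then reduce to the slogan: the obstruction to being $S$-linear (rather than $R$-linear) is killed by a nonzerodivisor, so torsionfreeness washes it out.

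For the Hom assertion, the inclusion $\hom S M N \subseteq \hom R M N$ is automatic since any $S$-linear map is in particular $R$-linear. For the reverse inclusion, I would take an arbitrary $R$-linear map $f \colon M \to N$, any $s = a/b \in S$, and any $m \in M$, and verify that $f$ respects the $S$-action. Using that $bs = a \in R$ and that $f$ is $R$-linear,
\[ b \cdot f(sm) \;=\; f(bsm) \;=\; f(am) \;=\; a \cdot f(m) \;=\; b \cdot (s f(m)). \]
Since $b$ is a nonzerodivisor of $R$ acting injectively on the $R$-torsionfree module $N$, this forces $f(sm) = s f(m)$, so $f$ is $S$-linear.

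For the tensor assertion, I would consider the canonical surjection $\pi \colon M \otimes_R N \twoheadrightarrow M \otimes_S N$ and argue its kernel $K$ is zero. As an $R$-module, $K$ is generated by the extra relations $sm \otimes_R n - m \otimes_R sn$ for $s \in S$, $m \in M$, $n \in N$, that are imposed in the $S$-tensor product but not in the $R$-tensor product. Writing $s = a/b$ and using $R$-bilinearity,
\[ b \cdot (sm \otimes_R n - m \otimes_R sn) \;=\; am \otimes_R n - m \otimes_R an \;=\; 0, \]
so each such generator is annihilated by a nonzerodivisor of $R$. Hence $K$ is an $R$-torsion submodule of $M \otimes_R N$, and the standing hypothesis that $M \otimes_R N$ is $R$-torsionfree forces $K = 0$, making $\pi$ an isomorphism.

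The only mild subtlety is pinning down the generators of $\ker \pi$; once one invokes the standard description of $M \otimes_S N$ as the quotient of $M \otimes_R N$ by the submodule generated by these $S$-bilinearity relations, the calculation is immediate. I expect no substantive obstacle beyond this bookkeeping.
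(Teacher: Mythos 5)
Your argument is correct. Note that the paper itself offers no proof of this lemma --- it is stated with a \qed as a known fact --- so there is nothing to compare against; your write-up supplies the expected justification. Both halves come down to the same standard device you use: every $s\in S\subseteq Q(R)$ has the form $a/b$ with $b$ a nonzerodivisor of $R$, so the failure of $S$-linearity of an $R$-linear map (respectively, the kernel of the canonical surjection $M\otimes_R N\twoheadrightarrow M\otimes_S N$, which is indeed generated by the relations $sm\otimes n-m\otimes sn$ by the universal property of $S$-balanced maps) is annihilated by a nonzerodivisor and hence vanishes under the torsionfreeness hypothesis.
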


\begin{theorem} \label{HW} Let $(R, \mathfrak{m})$ be a one-dimensional  Gorenstein local ring  and $M$ a finitely generated, torsionfree and faithful $R$-module. If  M is rigid and $\Z {\End R M}$ is Gorenstein, then $M$ has a free summand.
\end{theorem}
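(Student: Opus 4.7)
The plan is to transfer the problem from $R$ to the ring $S := \Z{\End R M}$, which by Corollary \ref{inQ} is identified with $\End R {\T R M}$ as a subring of $Q(R)$. Since $M$ is torsionfree over the one-dimensional Gorenstein ring $R$, $M$ is maximal Cohen--Macaulay and reflexive; together with faithfulness, this forces $\T R M$ to have positive grade (Proposition \ref{trace properties} \ref{trace grade}), so $R \subseteq S \subseteq Q(R)$ is a finite commutative one-dimensional $R$-algebra.

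Next I would endow $M$ with its natural $S$-module structure. Since $R \subseteq S \subseteq Q(R)$, $M$ inherits torsionfreeness and faithfulness over $S$, and hence is maximal Cohen--Macaulay over the one-dimensional ring $S$; by the hypothesis that $S$ is Gorenstein, $M$ is $S$-reflexive. By Lemma \ref{switchring} we have $\End R M = \End S M$, and taking centers yields $\Z{\End S M} = S$, so $M$ is balanced as an $S$-module. Localizing $S$ at a maximal ideal to reduce to the local case, Proposition \ref{balanceddim1} then produces a free $S$-summand of $M$, giving $M \cong N \oplus S$ as $S$-modules, and hence as $R$-modules.

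The rigidity hypothesis on $M$ enters at the end. Since $S$ is a direct $R$-summand of $M$, rigidity gives $\Ext 1 R S S = 0$. Applying $\hom R {-} S$ to $0 \to R \to S \to S/R \to 0$ (note that $S/R$ is a torsion, hence finite-length, $R$-module) and using $\hom R S S = S$ from Lemma \ref{switchring}, we obtain $\Ext 1 R S S \cong \Ext 1 R {S/R} S$, so $\Ext 1 R {S/R} S = 0$. Combined with the Gorensteinness of $S$---under which the conductor $\hom R S R$ is isomorphic to $S$ as an $S$-module---this vanishing forces $S = R$. Then $\End R {\T R M} = R$, and Lemma \ref{dualR} (since $R$ has depth $\leq 1$) yields $\T R M = R$, so $M$ has a free $R$-summand by Proposition \ref{trace properties} \ref{trace R}. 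The main obstacle will be this final deduction $\Ext 1 R {S/R} S = 0 \Rightarrow S = R$, which requires a careful duality argument exploiting the Gorenstein hypotheses on both $R$ and $S$.
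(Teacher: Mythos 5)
Your strategy --- transferring the problem to $S:=\Z{\End R M}=\End R {\T R M}$, viewing $M$ as a balanced reflexive $S$-module, invoking Proposition \ref{balanceddim1}, and then using rigidity to force $S=R$ --- is genuinely different from the paper's argument and can be made to work, but as written it has two gaps. First, Proposition \ref{balanceddim1} is a statement over a \emph{local} ring, while $S$ is in general only semilocal; localizing at a single maximal ideal cannot produce the asserted splitting $M\cong N\oplus S$, since a module with a free summand locally at every maximal ideal of a semilocal ring need not have a free summand globally. The repair is to localize at \emph{every} maximal ideal $\mathfrak{n}$ of $S$: each $S_{\mathfrak{n}}$ is Gorenstein local of depth $\leq 1$, and $M_{\mathfrak{n}}$ is reflexive, faithful and balanced over it (the center of the module-finite algebra $\End S M=\End R M$ commutes with localization), so $\T {S_{\mathfrak{n}}} {M_{\mathfrak{n}}}=S_{\mathfrak{n}}$ for all $\mathfrak{n}$, hence $\T S M=S$ by Proposition \ref{trace properties} \ref{trace extend}. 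This yields a split $S$-linear, hence $R$-linear, surjection $M^{t}\surj S$ for some $t$, so $S$ is a direct summand of $M^{t}$; that is weaker than being a summand of $M$, but it is all you need, since then $\Ext 1 R S S \subseteq \Ext 1 R {M^t}{M^t}=0$. Second, the step you flag as the main obstacle, that $\Ext 1 R {S/R} S=0$ forces $S=R$, is true but left unproved; fortunately it needs no conductor or canonical-module duality at all. As you note, surjectivity of $\hom R S S = S \lra \hom R R S = S$ gives $\Ext 1 R S S \cong \Ext 1 R {S/R} S$; and if $S\neq R$ then $S/R$ is a nonzero module of finite length, so $\Ann {S/R}$ is $\mathfrak{m}$-primary and its grade on $S$ equals $\operatorname{depth}_R S=1$ ($S$ being a nonzero finitely generated torsionfree module over the one-dimensional Cohen--Macaulay ring $R$), whence $\Ext 1 R {S/R} S \neq 0$ by the grade-sensitivity of Ext \cite[Proposition 1.2.10]{BandH}. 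This contradiction gives $S=R$, and then Lemma \ref{dualR} and Proposition \ref{trace properties} \ref{trace R} finish as you indicate.

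For comparison, the paper's proof never puts an $S$-module structure on $M$: after completing, it decomposes $C:=\End R {\T R M}$ into a product of complete local Gorenstein rings and uses $\hom R {C_i} R \cong C_i$ to identify $\T R M \cong C$, so that $C$ is a $C$-direct summand of $M^{t}$; rigidity then gives $\Ext 1 R C C =0$, Remark \ref{HJ} (Huneke--Jorgensen) makes $C\otimes_R C^{*}$ torsionfree, and a count of minimal generators forces $C$ to be cyclic, hence $C\cong R$ and $\T R M =R$. Your route uses the Gorenstein hypothesis on $\Z{\End R M}$ only to make $M$ reflexive over $S$, and it trades the completion/canonical-module computation and the Huneke--Jorgensen input for Proposition \ref{balanceddim1} over the localizations of $S$ together with the depth-sensitivity argument above; once the two gaps are closed, it is a legitimate alternative proof.
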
 

\begin{proof}
The hypotheses are stable under completion. In particular,  ${M}$  and its $\mathfrak{m}$-adic completion $\widehat{M}$ are reflexive and faithful over $R$ and $\widehat{R}$ respectively. Therefore by Theorem \ref{main} and Proposition \ref{trace properties} \ref{trace extend}
\begin{align*}
\Z{\End {\widehat{R}}  {\widehat{M}}  } &\cong\End {\widehat{R}} {\T {\widehat{R}} {\widehat{M}}} \\
&\cong \End R {\T R M} \otimes_R {\widehat{R}}\\
&\cong \Z{\End R M} \otimes_R \widehat{R} \,\
\end{align*}
as $R$-algebras. The module $\End R {\T R M} \otimes_R \widehat{R}$  is the completion of  the ring $\End R {\T R M}$ with respect to its Jacobson radical.  It follows that $\Z{\End {\widehat{R}}  {\widehat{M}}  }$ is Gorenstein; see Definition \ref{gorenstein}.

Thus we may assume that $R$ and $\Z{\End R M}$ are complete.

We write $C$ for $\End {{R}} {\T {{R}} {{M}}}$ and note that $R \subseteq C$ is a finite extension and hence $C$ is a semi-local complete one-dimensional Gorenstein ring. Because $C$ is semi-local and complete, it decomposes as a product $C \cong \prod_{i=1}^{n} C_i$ where $C_i$  are complete local Gorenstein rings; see \cite[Thm 8.15]{BandH}. Thus
\begin{align*}
\T {{R}} {{M}} & \cong  \hom {{R}} {\hom {{R}} {\T {{R}} {{M}}} {{R}}} {{R}}\\
& \cong \hom {{R}} C {{R}}\\
& \cong  \hom {{R}} { \prod_{i=1}^{n}C_i} {{R}}\\
& \cong \prod_{i=1}^{n} \hom {{R}} {C_i} {{R}} \\
& \cong\prod_{i=1}^{n} C_i\\
& \cong C. 
\end{align*}

The first isomorphism follows from the reflexivity of $\T R M$  and $\hom {{R}} {C_i} {{R}} \cong C_i$ by \cite[Thm 3.3.7]{BandH}. 

Let $t = \beta_R(M^*)$.  Since $C \subseteq Q({R})$, Lemma \ref{switchring} applies and  the following $R$-homomorphisms are  also $C$-homomorphisms:  \[{M}^{t} \surj \T {{R}} {{M}} \overset{\cong}{\lra} C.\]
This is a surjective map from the $C$-module $M^t$ onto $C$. It follows that ${M}^{t} \cong N \oplus C$ for some $C$-module $N$.  One has
\[  \Ext 1 {{R}} C C \subseteq \Ext 1 {{R}} {{M}^{t}} {{M}^{t}} =0\]

implying $C\otimes_{{R}} \hom {{R}} C {{R}}$ is torsionfree; see Remark \ref{HJ}. Using Lemma \ref{same}, as $C$-modules, and therefore also as ${{R}}$-modules, one has : \[ C \otimes_R C^* = C \otimes_C C^* \cong C^* .\]

Note that, $\beta_R(C\otimes_RC^*) = \beta_R(C)\beta_R(C^*)$. The isomorphism $C\otimes_R C^* \cong C^*$ implies that $C$ is a cyclic  $R$-module. Moreover, ${R} \subseteq C$ and $C$ is $R$-torsionfree. Therefore $C \cong  {R}$.  Since $C = \T {{R}} {{M}} ^*$  and $\T {{R}} {{M}}$ is reflexive, one has  $\T {{R}} {{M}}\cong R$ and therefore ${\T R M} = {R}$ by Proposition \ref{trace properties} \ref{trace generation} and \ref{trace contains}. It follows, by Proposition \ref{trace properties}  \ref{trace R}, that $M$ has a free summand. 
\end{proof}

\begin{corollary} 
Let $R$ be a  d-dimensional local ring that is Gorenstein in codimension one. Let $M$  be a finitely generated torsionfree faithful $R$-module. If  $M$ is rigid and $\Z {\End R M}$ is Gorenstein in codimension one, then $M$ is balanced.
\end{corollary}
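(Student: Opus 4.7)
Plan: I would pass to codimension one via localization, invoke Theorem~\ref{HW} at each such prime (with a separate argument covering the associated primes), and lift the resulting local statement back to $R$ by a grade-two annihilator argument on the cokernel of the natural map $R \hookrightarrow \Z{\End R M}$.

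Fix $\mathfrak{p} \in \operatorname{Spec}(R)$ with $\grade{\mathfrak{p}} \le 1$. The Gorenstein-in-codimension-one hypothesis on $R$ makes $R_\mathfrak{p}$ a Gorenstein local ring of dimension at most one, and $M_\mathfrak{p}$ remains finitely generated, torsionfree, faithful and rigid (each property commutes with flat base change, with faithfulness using $(\Ann{M})_\mathfrak{p} = \operatorname{Ann}_{R_\mathfrak{p}}(M_\mathfrak{p})$ for finitely generated $M$, and rigidity using the fact that $\Ext 1 R M M$ localizes). Since $\End R M$ is finitely presented over $R$, taking the center commutes with localization, so $\Z{\End {R_\mathfrak{p}} {M_\mathfrak{p}}} \cong \Z{\End R M}_\mathfrak{p}$, which is Gorenstein by hypothesis. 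If $\dim R_\mathfrak{p} = 1$, Theorem~\ref{HW} applies directly to $M_\mathfrak{p}$ and gives a free summand, so $M_\mathfrak{p}$ is balanced by Lemma~\ref{Kaplansky}. If $\dim R_\mathfrak{p} = 0$, then $R_\mathfrak{p}$ is Artinian Gorenstein local and hence self-injective, so every finitely generated $R_\mathfrak{p}$-module is reflexive; in particular $M_\mathfrak{p}$ is reflexive and faithful, and Proposition \ref{trace properties} \ref{trace grade} shows $\T {R_\mathfrak{p}} {M_\mathfrak{p}}$ contains a nonzerodivisor. In an Artinian local ring nonzerodivisors are units, so $\T {R_\mathfrak{p}} {M_\mathfrak{p}} = R_\mathfrak{p}$, and then Proposition \ref{trace properties} \ref{trace R} together with Lemma~\ref{Kaplansky} again yields balancedness.

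With codimension-one balancedness in hand, set $C := \Z{\End R M}/R$ and $I := \Ann{C}$. The previous step gives $C_\mathfrak{p} = 0$, i.e.\ $I \not\subseteq \mathfrak{p}$, for every $\mathfrak{p}$ with $\grade{\mathfrak{p}} \le 1$; the Noetherian identity $\grade I = \min\{\grade{\mathfrak{q}} : \mathfrak{q} \supseteq I\}$ then forces $\grade I \ge 2$. Torsionfreeness of $M$ propagates to $\End R M$ (any $r \in R$ killing an $f \in \End R M$ must kill each $f(m)$, hence $f$ itself), so $\Z{\End R M}$ is also $R$-torsionfree. Apply $\hom R {R/I} {-}$ to $0 \to R \to \Z{\End R M} \to C \to 0$: the outer terms vanish because $\hom R {R/I} R = 0$ (since $I$ contains a nonzerodivisor) and $\Ext 1 R {R/I} R = 0$ (since $\grade I \ge 2$, by the Ext-grade formula), while the middle term $\hom R {R/I} {\Z{\End R M}}$ vanishes for the same torsion-theoretic reason. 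The resulting exactness gives $\hom R {R/I} C = 0$, but $C$ is annihilated by $I$ so $\hom R {R/I} C \cong C$. Hence $C = 0$, meaning $\Z{\End R M} = R$, and $M$ is balanced.

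The main obstacle is the $\grade{\mathfrak{p}} = 0$ case, since Theorem~\ref{HW} is stated only for one-dimensional rings; I sidestep this by exploiting self-injectivity of the Artinian Gorenstein $R_\mathfrak{p}$ to make $M_\mathfrak{p}$ automatically reflexive, after which the trace-ideal machinery of Section~\ref{properties} collapses the situation without further use of rigidity. The other small verification is that $\Z{\End R M}$ commutes with flat base change, which holds because the center is defined by a left-exact equalizer on the finitely presented $R$-algebra $\End R M$.
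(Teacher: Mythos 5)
Your argument is correct and, at the top level, follows the same strategy as the paper's one-line proof (which simply cites Theorem \ref{HW} and Corollary \ref{balancedpurity}): localize at codimension-one primes, apply the one-dimensional theorem there, and globalize by a grade-two argument. The value of your write-up lies in the two steps the paper leaves implicit. For the globalization, Corollary \ref{balancedpurity} assumes $M$ reflexive (and Proposition \ref{balancedpuritytorsionless} assumes $M$ torsionless), while the corollary at hand only assumes $M$ torsionfree; you sidestep this mismatch by running the grade-two cokernel argument directly on $0 \to R \to \Z{\End R M} \to C \to 0$, using only that $\End R M$, and hence its center, inherits torsionfreeness from $M$, and that faithfulness makes $R \to \Z{\End R M}$ injective. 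You also treat the primes at which $R_{\mathfrak p}$ is Artinian separately (Theorem \ref{HW} is stated only in dimension one), using self-injectivity to get reflexivity and then Proposition \ref{trace properties} and Lemma \ref{Kaplansky}; this case is genuinely needed when $R$ has zerodivisors and is not covered by the citation in the paper. Two of your auxiliary claims are true under the hypotheses but need a sentence more than you give them: (1) torsionfreeness does not commute with localization in general; here it does because every $\mathfrak q \in \operatorname{Ass} R$ has grade $0$, so $R_{\mathfrak q}$ is Artinian Gorenstein and $\operatorname{Ass} R = \operatorname{Min} R$, whence $\operatorname{Ass} M \subseteq \operatorname{Min} R$ and torsionfreeness passes to $M_{\mathfrak p}$; (2) "$\operatorname{grade} \mathfrak p \le 1$ implies $\dim R_{\mathfrak p} \le 1$" is not automatic, since grade can be strictly smaller than height; it holds because such a $\mathfrak p$ lies in some $\mathfrak q$ with $\operatorname{depth} R_{\mathfrak q} \le 1$, and $R_{\mathfrak q}$ is then Gorenstein of height at most one, so $\operatorname{ht} \mathfrak p \le 1$ as well --- alternatively, for the estimate $\operatorname{grade}(\operatorname{Ann}_R C) \ge 2$ you only need $C_{\mathfrak p} = 0$ at primes with $\operatorname{depth} R_{\mathfrak p} \le 1$, where Gorensteinness immediately gives dimension at most one. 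With those two remarks inserted, your proof is complete and slightly more self-contained than the paper's.
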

\begin{proof}
This is a consequence of Theorem \ref{HW} and Corollary \ref{balancedpurity}. 
\end{proof}

\begin{remark} The ring $\Z {\End R M}$ being Gorenstein does not, by itself, imply that $M$ has a free summand. For example, suppose $R$ is a one-dimensional commutative domain with finitely generated integral closure (for example, $R$ is complete) such that every ideal of $R$ is two-generated. Then every ring between $R$ and its integral closure is Gorenstein. In particular,  $\End R I$ is Gorenstein for each ideal $I$; see \cite[Section 7]{Bassubiquity}, \cite{SallyVasc2}. 
\end{remark}

However, over a one-dimensional Gorenstein local domain with $M$ a torsionfree module,  it is conjectured that rigidity of $M$  (equivalently $\MO$ is torsionfree) is sufficient to ensure $M$ is free: 
\begin{conjecture}\label{HWconj} (Huneke and Wiegand \cite[pgs 473-474]{HWrigidityoftor}) Let $R$ be a Gorenstein  local domain of dimension one and $M$ a nonzero finitely generated torsionfree $R$-module, that is not free. Then $M \otimes_R M^*$ has a nonzero torsion submodule.
\end{conjecture}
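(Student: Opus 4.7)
The plan is to leverage Theorem \ref{HW}, which reduces the conjecture to showing that over a one-dimensional Gorenstein local domain, the hypothesis ``$\Z{\End R M}$ is Gorenstein'' is automatic for rigid torsionfree $M$. First I would pass to the $\mathfrak{m}$-adic completion: the hypotheses survive by Proposition \ref{trace properties} \ref{trace extend}, and completion preserves Gorensteinness of the relevant rings. Since $R$ is a domain, every nonzero torsionfree $M$ is automatically faithful, and being torsionfree of positive depth over a one-dimensional Gorenstein local ring is equivalent to being maximal Cohen-Macaulay and reflexive (see \cite[Corollary 2.3]{Vasc1}). So all hypotheses of Theorem \ref{HW} are met \emph{except} possibly the Gorensteinness of $\Z{\End R M}$.

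By Theorem \ref{main} combined with Corollary \ref{identify}, $\Z{\End R M}$ is identified inside $Q(R)$ with $\End R {\T R M}$, a module-finite birational extension of $R$. Setting $I := \T R M$, the task becomes to show $\End R I$ is Gorenstein whenever $M$ is rigid. A natural route is to bootstrap from the trace ideal case (Proposition \ref{HWconjtrace}): by Corollary \ref{alliso} and Proposition \ref{trace properties} \ref{trace contains}, one has $\End R {\T R I} = \End R I$, so if rigidity of $M$ descends to a useful vanishing for the ideal $I$ --- say $\Ext 1 R I I = 0$, equivalently $I \otimes_R I^*$ torsionfree (Remark \ref{HJ}) --- then the trace-ideal version of the conjecture would force $I = R$. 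This would yield a free summand of $M$ by Proposition \ref{trace properties} \ref{trace R}, and an induction on $\beta_R(M)$, after splitting off that summand and verifying rigidity and torsionfreeness of the complement, would finish the argument.

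The main obstacle is precisely the descent of rigidity from $M$ to $I = \T R M$. The natural surjection $\vartheta_M\colon \MO \twoheadrightarrow I$ does not transfer Tor-vanishing in any obvious way, because the kernel of the trace map can be ill-behaved and $\Ext$-vanishing is not preserved under quotients. Absent such a descent, one would need to verify Gorensteinness of $\End R I$ directly: being semilocal and complete, $\End R I$ decomposes as a finite product of complete local one-dimensional Cohen-Macaulay rings (as in the proof of Theorem \ref{HW}), and each factor would have to be shown Gorenstein using duality on $R$ together with the rigidity of $M$. This step is where the conjecture remains open in general, and it is the reason the paper settles only the trace-ideal instance rather than the full statement.
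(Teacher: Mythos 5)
Your submission is not a proof, and to your credit it does not claim to be one: the statement is Conjecture \ref{HWconj}, which the paper itself leaves open, proving only two special cases --- Theorem \ref{HW}, where the extra hypothesis that $\Z{\End R M}$ is Gorenstein is assumed, and Proposition \ref{HWconjtrace}, which settles the conjecture for ideals isomorphic to trace ideals. Your reduction strategy is the same one the paper sketches in the remark following Proposition \ref{HWconjtrace}: via Theorem \ref{main} and Corollary \ref{identify} identify $\Z{\End R M}$ with $\End R {\T R M}$ inside $Q(R)$, and observe that the conjecture would follow if rigidity of $M$ forced this ring to be Gorenstein (equivalently, by Lemma \ref{dualR} and Proposition \ref{balanceddim1}, if rigid torsionfree modules were balanced). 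The genuine gap is exactly where you place it: neither Gorensteinness of $\End R {\T R M}$ nor any descent of vanishing from $M$ to $I=\T R M$ (say $\Ext 1 R I I =0$, or torsionfreeness of $I\otimes_R I^*$) is established, and the surjection $\vartheta_M\colon \MO \twoheadrightarrow \T R M$ gives no control over such invariants of the quotient. Since no substitute argument is supplied, the proposal does not prove the statement; it reproduces the paper's own reduction and stops at the same open point.

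A few smaller remarks on the parts you do carry out. The setup is sound: over a one-dimensional Gorenstein local domain a nonzero finitely generated torsionfree module is maximal Cohen--Macaulay, hence reflexive, and it is faithful because $R$ is a domain, so Theorem \ref{HW} indeed applies the moment Gorensteinness of the center is known; the passage to the completion is likewise unproblematic by Proposition \ref{trace properties} \ref{trace extend}. Your proposed induction on $\beta_R(M)$ after splitting off a free summand can also be made to work, since writing $M\cong R\oplus N$ one finds $N\otimes_R N^*$ as a direct summand of $\MO$, so the hypothesis (torsionfreeness of $\MO$, equivalently rigidity in the sense relevant here) passes to $N$. But none of this bookkeeping touches the missing step, which is the substance of the conjecture.
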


\begin{proposition} \label{HWconjtrace}
Conjecture \ref{HWconj} is true for any ideal isomorphic to a trace ideal. 
\end{proposition}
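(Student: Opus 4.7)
The plan is to reduce to the case that $I$ is itself a trace ideal, exploit rigidity to upgrade the surjective trace map to an isomorphism, and close with a single count of minimal generators; in particular this will bypass the Gorenstein hypothesis on $\End R I$ that Theorem \ref{HW} would otherwise demand.

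First I would observe that both the hypothesis that $I$ is isomorphic to a trace ideal and the conclusion (freeness of $I$ versus torsion in $I\otimes_R I^{*}$) depend only on the $R$-module isomorphism class of $I$. So one may replace $I$ by the trace ideal to which it is isomorphic and assume $I = \T R J$ for some $R$-module $J$. Proposition \ref{trace properties} \ref{trace contains} then gives $\T R I = \T R {\T R J} = I$, so $I$ equals its own trace ideal. Because $R$ is a one-dimensional Gorenstein local domain, the nonzero ideal $I$ is automatically faithful, torsionfree, reflexive, and maximal Cohen--Macaulay, and is free at the generic point of $R$; in particular $\Ext 1 R I I$ has finite length. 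By Remark \ref{HJ}, the hypothesis that $I\otimes_R I^{*}$ is torsionfree (equivalently, maximal Cohen--Macaulay) is then equivalent to $I$ being rigid.

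The crucial step is to combine rigidity with the trace-ideal identity $\T R I = I$. The trace map
\[
\vartheta_I\colon I\otimes_R I^{*} \twoheadrightarrow \T R I = I
\]
is surjective, and both source and target have generic rank one, so the kernel of $\vartheta_I$ has generic rank zero. This kernel is therefore a torsion $R$-submodule of the torsionfree module $I\otimes_R I^{*}$ and hence vanishes. Thus $\vartheta_I$ is an isomorphism $I\otimes_R I^{*}\cong I$.

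Counting minimal numbers of generators over the local ring $R$ in this isomorphism gives
\[
\beta_R(I) \;=\; \beta_R(I\otimes_R I^{*}) \;=\; \beta_R(I)\cdot \beta_R(I^{*}),
\]
and since $\beta_R(I)\geq 1$ we conclude $\beta_R(I^{*}) = 1$, so $I^{*}$ is cyclic. By Lemma \ref{WiegandLeuschke}, $I^{*}$ identifies with a fractional ideal inside $Q(R)$ and is therefore torsionfree; a nonzero cyclic torsionfree module over the domain $R$ is isomorphic to $R$, so $I^{*}\cong R$. Dualizing once more yields $I\cong I^{**}\cong R^{*} = R$, so $I$ is free. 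The step I anticipate as the main obstacle is spotting that, for a trace ideal, rigidity alone suffices to promote the surjective trace map $\vartheta_I$ to an isomorphism; once this is recognized, the rest of the argument is a one-line bookkeeping count of minimal generators and avoids any hypothesis on $\End R I$.
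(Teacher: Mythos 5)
Correct, and essentially the paper's own argument: reduce to an actual trace ideal, obtain $I\otimes_R I^*\cong I$ from the torsionfreeness hypothesis, count minimal generators to see $I^*$ is cyclic and hence $I^*\cong R$, and use reflexivity of nonzero ideals over a one-dimensional Gorenstein local ring to conclude $I\cong R$. The only difference is in the justification of the middle step: you prove injectivity of the surjective trace map $\vartheta_I\colon I\otimes_R I^*\twoheadrightarrow \T R I = I$ by a generic-rank count against the torsionfree hypothesis, whereas the paper identifies $I^*$ with $C=\End R {\T R I}$ and uses Lemma \ref{switchring} to rewrite $I\otimes_R C = I\otimes_C C\cong I$; under the identification of Lemma \ref{WiegandLeuschke} these are the same map, so the two arguments coincide in substance (and your aside equating torsionfreeness of $I\otimes_R I^*$ with rigidity via Remark \ref{HJ} is not actually needed).
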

\begin{proof}
It is enough to prove the proposition for trace ideals. 
We write $C$ for $\End R {\T R M}$.  If $\T R M \otimes_R {\T R M}^*$ is torsionfree, then 
\[ \T R M \otimes_R {\T R M}^* =  \T R M \otimes_C C \cong \T R M.\]
The final map is also an $R$-isomorphism, implying ${\T R M}^*$ is cyclic over $R$.  Since ${\T R M}^*$ is torsionfree and $R$ is a domain, we have ${\T R M}^* \cong R$. Finally since $\T R M$ is reflexive (a nonzero ideal over a one-dimensional Gorenstein ring; see  \cite[Theorem 6.2]{Bassubiquity} \cite[Theorem A.1]{Vasc1}), $\T R M \cong R$. This  implies $\T R M =R$ by Proposition \ref{trace properties} \ref{trace generation} and  \ref{trace contains}. \end{proof}

\begin{Question}
Over a Noetherian ring of depth one, which  ideals are isomorphic to a trace ideal?
\end{Question}

For a local ring $(R, \mathfrak{m}, k)$ that is not a DVR, certainly the isomorphism classes containing $R$  and $\mathfrak{m}$ contain trace ideals: $R$ and $\mathfrak{m}$. However, not all isomorphism classes of ideals do. 

\begin{example}
For a field $k$, consider the ring 
\[R =  k [x,y,z]/(y^2-xz, x^2y-z^2, x^3-yz)   \cong  k[t^3, t^4, t^5].\]

The ideal $ (x,y)$ is not isomorphic to a trace ideal. For, if the isomorphism class of  an ideal $I$ contains a trace ideal, then $I \cong \T R I$. However \[\T R {(x,y)} = (x,y,z).\] 

To see this, recall that the trace ideal can be calculated from a presentation  matrix; see Remark \ref{tracepresmatrix}. One has:
 $$
 \xymatrix{R^3 \ar@[->][rrrr]^{A = \left[\begin{array}{ccc} -z & -y & x^2 \\ y & x& -z \end{array}\right]}&&&& R^2 \ar@{->>}[dr]^{\hspace{.3in}\circlearrowleft} \ar@{-->}[rr]^{\bar{\alpha}} && R .\\
&&&&& (x,y) \ar@{-->}[ur]_{\alpha}& \\
 }
 $$

Maps $\alpha \in \hom R {(x,y)} R$ induce and are induced by matrices $\bar{\alpha} \in \hom R {R^2} R$   in the left kernel of $A$. These are generated by $\begin{bmatrix}
     y& z  
  \end{bmatrix}$,
$\begin{bmatrix}
    x& y 
  \end{bmatrix}$, and 
  $\begin{bmatrix}
    z & x^2
  \end{bmatrix}$.

 \end{example}

\begin{remark}
We have seen that under the hypotheses of Theorem \ref{HW},  $\Z {\End R M}$ Gorenstein implies $\Z {\End R M}=R$, and this implies $M$ has a free summand. To prove Conjecture \ref{HWconj}, it is left to show that for reflexive modules over a one-dimensional Gorenstein ring, rigidity implies $\Z {\End R M}$ is Gorenstein, or more directly,  that over a one-dimensional Gorenstein domain, rigid implies balanced.

This investigation naturally leads to the the following question:
\end{remark}
\begin{Question}
Suppose $R$ is a Noetherian ring. What are the necessary and sufficient conditions for a rigid module to be balanced?
\end{Question}

\begin{example}
Let $R = k[[x,y]]/(xy)$ and $M= R/(x)$. Recall a projective resolution of $M$ over $R$ is \[ \cdots \overset{x}{\lra} R \overset{y}{\lra} R \overset{x}{\lra} R  \lra0 .\]
Applying $\hom R ? {R/(x)}$  yields the complex
\[0 \lra \hom R  R {R/(x)}  \overset{x}{\lra} \hom R  R {R/(x)}  \overset{y}{\lra} \hom R  R {R/(x)}   \lra \cdots.\]

Since multiplication by $y$ is an injective map on $R/(x)$ one gets \[\Ext 1 R {R/(x)}{ R/(x)} = 0 .\]
 
So $M$ is rigid, but not balanced since $\hom R {R/(x)} {R/(x)} \cong R/(x)$.\end{example}

\begin{definition} Set $E:= \End R M$. The module $\End E  M$ is called the \emph{double centralizer of $M$}.
\end{definition}

\begin{remark} 
There is a natural map  \[R \lra \End E  M\] given by sending $r\in R$ to multiplication by $r$.   Writing  $I$ for $\Ann M$,  this map induces a monomorphism of rings from $R/{I}$  into $E = \End R M  = \End {R/I} M$. It follows that the double centralizer, $\End E M \subseteq \End {R/I} M$, is precisely the $R/I$-endomorphisms of $M$ that commute with all the others. That is \[\Z{\End R M}  = \Z{\End {R/I} M}= \End E M.\]
 
 A module $M$ is said to have the \emph{Double Centralizer Property} (DCP) when the natural map $R\lra \End E M$ is a surjection. If $M$ is faithful, the map is always an injection and having the DCP is equivalent to being balanced.

\end{remark}

\begin{lemma} \cite[Lemma 2]{Suzuki1971} \label{Suzuki2}
Let $R$ be a ring (not necessarily commutative) and $M_R$ a right $R$-module. Assume that there exists the $R$-exact sequence $$ 0 \lra R_R \overset{\delta}{\lra} M_R$$ where $M_R$ is generated by $\delta(1)$ as an $E$-module. Then the following are equivalent

\begin{enumerate}
\item $M_R$ has the Double Centralizer Property
\item $M_R/\delta(R) \inj \displaystyle\prod M_R$ \qed
\end{enumerate}
\end{lemma}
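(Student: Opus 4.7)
The plan is to identify both conditions (1) and (2) with a single set-theoretic equality inside $M_R$ and then conclude. Set $m_0 := \delta(1)$, write $E := \End R {M_R}$, and let $I := \{e \in E : e(m_0) = 0\}$, which is a left ideal of $E$. The first step is to establish a natural bijection
\[ \End E {M_R} \;\longleftrightarrow\; N := \{m \in M_R : e(m) = 0 \text{ for all } e \in I\}, \qquad \phi \longmapsto \phi(m_0). \]
Injectivity follows from the assumption $M_R = E(m_0)$: if $\phi(m_0) = 0$, then $\phi(e(m_0)) = e(\phi(m_0)) = 0$ for every $e \in E$. The image lies in $N$ because $e(\phi(m_0)) = \phi(e(m_0)) = 0$ for each $e \in I$. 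Conversely, given $n \in N$, the rule $\phi_n(e(m_0)) := e(n)$ is well-defined (if $e_1(m_0) = e_2(m_0)$ then $e_1 - e_2 \in I$, so $(e_1 - e_2)(n) = 0$), obviously $E$-linear, and satisfies $\phi_n(m_0) = n$.

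With this identification in hand, I would translate (1). The natural map $R \lra \End E {M_R}$ sends $r$ to right multiplication $\mu_r : m \mapsto mr$; this $\mu_r$ is $E$-linear because every $e \in E$ is right $R$-linear. Under the bijection above, $\mu_r$ corresponds to $\mu_r(m_0) = m_0 r = \delta(r)$, so the image of $R$ in $N$ is exactly $\delta(R)$. Therefore (1) — surjectivity of $R \lra \End E {M_R}$ — is equivalent to the equality $\delta(R) = N$ as subsets of $M$.

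Next I would translate (2). An $R$-linear embedding $M_R/\delta(R) \inj \prod M_R$ exists \IFF $M_R/\delta(R)$ is cogenerated by $M_R$, that is, \IFF for every nonzero $\bar m \in M_R/\delta(R)$ there is an $R$-linear map $M_R/\delta(R) \lra M_R$ not killing $\bar m$. Such maps are in bijection with those $g \in E$ that vanish on $\delta(R)$; since $g(\delta(r)) = g(m_0)r$, vanishing on $\delta(R)$ amounts to $g(m_0) = 0$, i.e., $g \in I$. Hence (2) reads $\{m \in M_R : g(m) = 0 \text{ for all } g \in I\} \subseteq \delta(R)$, and the opposite inclusion $\delta(R) \subseteq N$ is automatic since $g(\delta(r)) = g(m_0) r = 0$ for $g \in I$. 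So (2) is also equivalent to $\delta(R) = N$.

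Both conditions being equivalent to $\delta(R) = N$, the lemma follows. I expect the only subtle point to be the first step — verifying that evaluation at $m_0$ surjects onto $N$, not merely injects — and this is precisely where the hypothesis that $m_0$ generates $M_R$ as an $E$-module is consumed. Without that hypothesis, $\End E {M_R}$ would contain endomorphisms that cannot be reconstructed from their value at $m_0$, and the whole reduction to a single set equality would break down.
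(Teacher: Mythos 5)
Your proof is correct, and there is nothing in the paper to compare it against: the lemma is quoted from Suzuki's 1971 paper and stated without proof, so your argument supplies what the paper only cites. The route you take is the natural (and essentially classical) one: evaluation at $m_0=\delta(1)$ identifies $\End E {M_R}$ with $N=\{m\in M_R : e(m)=0 \text{ for all } e\in I\}$, where $I=\{e\in E : e(m_0)=0\}$, and the hypothesis $M_R=Em_0$ is consumed exactly where you say --- it makes evaluation injective and makes $\phi_n(e(m_0)):=e(n)$ a well-defined, additive, $E$-linear endomorphism defined on all of $M_R$. The image of $R\lra\End E {M_R}$ corresponds under this bijection to $\delta(R)$ (since $\mu_r(m_0)=m_0r=\delta(r)$ and each $\mu_r$ is $E$-linear because every $e\in E$ is right $R$-linear), so (1) becomes $\delta(R)=N$; and since $R$-linear maps $M_R/\delta(R)\lra M_R$ correspond to elements of $I$ (as $g(\delta(r))=g(m_0)r$), the existence of an embedding of $M_R/\delta(R)$ into a product of copies of $M_R$ --- via coordinate projections in one direction and the map into the product indexed by $\operatorname{Hom}_R(M_R/\delta(R),M_R)$ in the other --- becomes $N\subseteq\delta(R)$, with $\delta(R)\subseteq N$ automatic because $e(\delta(r))=e(m_0)r=0$ for $e\in I$. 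Both conditions are thus the single equality $\delta(R)=N$, as you say. One harmless observation: you never use injectivity of $\delta$; that hypothesis is only relevant to the injectivity of $R\lra\End E {M_R}$ (faithfulness), not to the equivalence of (1) and (2), so its absence from your argument is not a gap.
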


\begin{remark} Suppose $R$ is a commutative ring.  If $\{x_1, \ldots, x_n\}$ generate $M$ as an $R$-module, there is an injection $$0 \lra R \overset{\delta}{\lra} M^{n}$$ where $\delta(1) = (x_1, \ldots, x_n)$.  This map fulfills the hypotheses of the lemma; see  \cite[Theorems 2.7, 2.8]{Koenig2001}. 

Recall, $\T R {M^n} = \T R M$. So when $M$ is a reflexive faithful $R$-module, \[\Z{\End  R M} = \Z{\End R {M^n}},\] and we may replace $M$ by $M^n$. 
Given $R$ injects into $M$ and Lemma \ref{Suzuki2},  proving Conjecture \ref{HWconj} is equivalent to showing $M/\delta(R)$ is torsionfree.  
 \end{remark}
\section*{Acknowledgements}

The results proved here form part of my doctoral dissertation  at the University
of Utah and is the culmination of work started while at the University of Nebraska-Lincoln. Special thanks to my Ph.D. advisor, Srikanth Iyengar. Many thanks to Susan Loepp for her helpful comments on several drafts of this paper. Thanks also to Alexandra Seceleanu for communicating Example \ref{grade}, to Steffen K{\"o}nig for suggesting several useful references and to Eleonore Faber and Roger Wiegand for their careful reading of this work. I am also very grateful to the referee for improving this paper through their detailed and constructive feedback.  

This work was partially supported by the NSF under grant DMS-1503044.   I also thank Williams College, where this paper was written, for its support through the Gaius Charles Bolin Fellowship. 
\bibliographystyle{amsplain}	

\begin{bibdiv}
\begin{biblist}

\bib{MaxOrders}{article}{
      author={Auslander, Maurice},
      author={Goldman, Oscar},
       title={Maximal orders},
        date={1960},
        ISSN={0002-9947},
     journal={Trans. Amer. Math. Soc.},
      volume={97},
       pages={1\ndash 24},
      review={\MR{0117252 (22 \#8034)}},
}

\bib{Bassubiquity}{article}{
      author={Bass, Hyman},
       title={On the ubiquity of {G}orenstein rings},
        date={1963},
        ISSN={0025-5874},
     journal={Math. Z.},
      volume={82},
       pages={8\ndash 28},
      review={\MR{0153708 (27 \#3669)}},
}

\bib{BandH}{book}{
      author={Bruns, Winfried},
      author={Herzog, J{\"u}rgen},
       title={Cohen-{M}acaulay rings},
      series={Cambridge Studies in Advanced Mathematics},
   publisher={Cambridge University Press},
     address={Cambridge},
        date={1993},
      volume={39},
        ISBN={0-521-41068-1},
      review={\MR{1251956 (95h:13020)}},
}

\bib{HIW}{unpublished}{
      author={Huneke, Craig},
      author={Iyengar, Srikanth},
      author={Wiegand, Roger},
       title={Rigid ideals over complete intersection rings},
        date={2016},
        note={In preparation},
}

\bib{HJsymmetry}{article}{
      author={Huneke, Craig},
      author={Jorgensen, David~A.},
       title={Symmetry in the vanishing of {E}xt over {G}orenstein rings},
        date={2003},
        ISSN={0025-5521},
     journal={Math. Scand.},
      volume={93},
      number={2},
       pages={161\ndash 184},
      review={\MR{2009580 (2004k:13039)}},
}

\bib{HWrigidityoftor}{article}{
      author={Huneke, Craig},
      author={Wiegand, Roger},
       title={Tensor products of modules and the rigidity of {${\rm Tor}$}},
        date={1994},
        ISSN={0025-5831},
     journal={Math. Ann.},
      volume={299},
      number={3},
       pages={449\ndash 476},
         url={http://dx.doi.org/10.1007/BF01459794},
      review={\MR{1282227 (95m:13008)}},
}

\bib{Kaplansky1954}{book}{
      author={Kaplansky, Irving},
       title={Infinite abelian groups},
   publisher={University of Michigan Press, Ann Arbor},
        date={1954},
      review={\MR{0065561 (16,444g)}},
}

\bib{Koenig2001}{article}{
      author={K{\"o}nig, Steffen},
      author={Slung{a}rd, Inger~Heidi},
      author={Xi, Changchang},
       title={Double centralizer properties, dominant dimension, and tilting
  modules},
        date={2001},
        ISSN={0021-8693},
     journal={J. Algebra},
      volume={240},
      number={1},
       pages={393\ndash 412},
         url={http://dx.doi.org/10.1006/jabr.2000.8726},
      review={\MR{1830559 (2002c:16018)}},
}

\bib{Lam}{book}{
      author={Lam, T.~Y.},
       title={Lectures on modules and rings},
      series={Graduate Texts in Mathematics},
   publisher={Springer-Verlag, New York},
        date={1999},
      volume={189},
        ISBN={0-387-98428-3},
         url={http://dx.doi.org/10.1007/978-1-4612-0525-8},
      review={\MR{1653294 (99i:16001)}},
}

\bib{LeuschkeCrepant}{incollection}{
      author={Leuschke, Graham~J.},
       title={Non-commutative crepant resolutions: scenes from categorical
  geometry},
        date={2012},
   booktitle={Progress in commutative algebra 1},
   publisher={de Gruyter, Berlin},
       pages={293\ndash 361},
      review={\MR{2932589}},
}

\bib{WLCMR}{book}{
      author={Leuschke, Graham~J.},
      author={Wiegand, Roger},
       title={Cohen-{M}acaulay representations},
      series={Mathematical Surveys and Monographs},
   publisher={American Mathematical Society, Providence, RI},
        date={2012},
      volume={181},
        ISBN={978-0-8218-7581-0},
         url={http://dx.doi.org/10.1090/surv/181},
      review={\MR{2919145}},
}

\bib{Mats}{book}{
      author={Matsumura, Hideyuki},
       title={Commutative ring theory},
     edition={Second},
      series={Cambridge Studies in Advanced Mathematics},
   publisher={Cambridge University Press},
     address={Cambridge},
        date={1989},
      volume={8},
        ISBN={0-521-36764-6},
        note={Translated from the Japanese by M. Reid},
      review={\MR{1011461 (90i:13001)}},
}

\bib{SallyVasc2}{article}{
      author={Sally, Judith~D.},
      author={Vasconcelos, Wolmer~V.},
       title={Stable rings and a problem of {B}ass},
        date={1973},
        ISSN={0002-9904},
     journal={Bull. Amer. Math. Soc.},
      volume={79},
       pages={574\ndash 576},
      review={\MR{0311643 (47 \#205)}},
}

\bib{Suzuki1971}{article}{
      author={Suzuki, Yasutaka},
       title={Dominant dimension of double centralizers},
        date={1971},
        ISSN={0025-5874},
     journal={Math. Z.},
      volume={122},
       pages={53\ndash 56},
      review={\MR{0289582 (44 \#6770)}},
}

\bib{Suzuki1974}{article}{
      author={Suzuki, Yasutaka},
       title={Double centralizers of torsionless modules},
        date={1974},
        ISSN={0021-4280},
     journal={Proc. Japan Acad.},
      volume={50},
       pages={612\ndash 615},
      review={\MR{0379602 (52 \#507)}},
}

\bib{Vasc1}{article}{
      author={Vasconcelos, Wolmer~V.},
       title={Reflexive modules over {G}orenstein rings},
        date={1968},
        ISSN={0002-9939},
     journal={Proc. Amer. Math. Soc.},
      volume={19},
       pages={1349\ndash 1355},
      review={\MR{0237480 (38 \#5762)}},
}

\bib{Vasccommend}{article}{
      author={Vasconcelos, Wolmer~V.},
       title={On commutative endomorphism rings},
        date={1970},
        ISSN={0030-8730},
     journal={Pacific J. Math.},
      volume={35},
       pages={795\ndash 798},
      review={\MR{0279086 (43 \#4812)}},
}

\bib{Vascaffine}{article}{
      author={Vasconcelos, Wolmer~V.},
       title={Computing the integral closure of an affine domain},
        date={1991},
        ISSN={0002-9939},
     journal={Proc. Amer. Math. Soc.},
      volume={113},
      number={3},
       pages={633\ndash 638},
         url={http://dx.doi.org/10.2307/2048595},
      review={\MR{1055780 (92b:13013)}},
}

\bib{Whitehead1980}{article}{
      author={Whitehead, James~M.},
       title={Projective modules and their trace ideals},
        date={1980},
        ISSN={0092-7872},
     journal={Comm. Algebra},
      volume={8},
      number={19},
       pages={1873\ndash 1901},
         url={http://dx.doi.org/10.1080/00927878008822551},
      review={\MR{588450}},
}

\bib{OrderIdeals}{article}{
    AUTHOR = {Evans, Jr., E. Graham and Griffith, Phillip A.},
     TITLE = {Order ideals},
 BOOKTITLE = {Commutative algebra ({B}erkeley, {CA}, 1987)},
    SERIES = {Math. Sci. Res. Inst. Publ.},
    VOLUME = {15},
     PAGES = {213--225},
 PUBLISHER = {Springer, New York},
      YEAR = {1989},
   MRCLASS = {13C10 (13D05 13D15)},
  MRNUMBER = {1015519},
MRREVIEWER = {Liam O'Carroll},
       DOI = {10.1007/978-1-4612-3660-3-10},
       URL = {http://dx.doi.org/10.1007/978-1-4612-3660-3-10},
}
\end{biblist}
\end{bibdiv}

\end{document}